\newcommand{\Ker}{\operatorname{Ker}}
\newcommand{\Aut}{\operatorname{Aut}}
\newcommand{\supp}{\operatorname{supp}}
\newcommand{\cosys}{\operatorname{cosys}}
\newcommand{\cobound}{\operatorname{cobound}}
\newcommand{\EL}{\operatorname{EL}}
\newcommand{\SL}{\operatorname{SL}}
\newcommand{\St}{\operatorname{St}}
\newcommand{\ord}{\operatorname{ord}}
\newcommand{\Unip}{\operatorname{Unip}}
\newcommand{\Sym}{\operatorname{Sym}}
\begin{document}
\newtheorem{theorem}{Theorem}[section]
\newtheorem{lemma}[theorem]{Lemma}
\newtheorem{definition}[theorem]{Definition}
\newtheorem{claim}[theorem]{Claim}
\newtheorem{example}[theorem]{Example}
\newtheorem{remark}[theorem]{Remark}
\newtheorem{proposition}[theorem]{Proposition}
\newtheorem{corollary}[theorem]{Corollary}
\newtheorem{observation}[theorem]{Observation}

\author{Tali Kaufman}
%\address{Department of Computer Science, Bar-Ilan University,  Israel}
%\email{kaufmant@mit.edu}
\affil{Department of Computer Science, Bar-Ilan University, kaufmant@mit.edu, }

\author{Izhar Oppenheim}
%\address{Department of Mathematics, Ben-Gurion University of the Negev, Be'er Sheva,  Israel}
%\email{izharo@bgu.ac.il}
\affil{Department of Mathematics, Ben-Gurion University of the Negev, Be'er Sheva 84105, Israel, izharo@bgu.ac.il}

\author{Shmuel Weinberger}
%\address{Department of Mathematics, University of Chicago}
%\email{shmuel@math.uchicago.edu}
\affil{Department of Mathematics, University of Chicago,  shmuel@math.uchicago.edu}

\title{Coboundary expansion of coset complexes}

\maketitle

\begin{abstract}
Coboundary expansion is a high dimensional generalization of the Cheeger constant to simplicial complexes.  Originally,  this notion was motivated by the fact that it implies topological expansion, but nowadays a significant part of the motivation stems from its deep connection to problems in theoretical computer science such as agreement expansion in the low soundness regime.  In this paper,  we prove coboundary expansion with non-Abelian coefficients for the coset complex construction of Kaufman and Oppenheim.  Our proof uses a novel global argument, as opposed to the local-to-global arguments that are used to prove cosystolic expansion.
\end{abstract}

\section{Introduction}

In this paper we show that the coset complexes that were introduced by \cite{KO-construction} (or in fact a variant of those) are coboundary expanders over the symmetric group.  In addition to the importance of the coboundary expansion result per se,  our work suggests that the recent result of \cite{BMV} that construct PCPs over some variants of the Ramanujan complexes, could potentially be implemented also over the coset complexes to yield efficient PCPs over the coset complexes, i.e.,  efficient PCPs over elementary high dimensional expanders.

\subsection{Our contribution}
 In this paper, we will show that for every finite group $\Lambda$,  a slight variation of the Kaufman-Oppenheim complexes give examples of bounded degree $1$-coboundary expanders over $\Lambda$.

More explicitly,  for $n \geq 3$ and $p$ prime,   we construct an infinite family of $n$-dimensional coset complexes $\lbrace X_{n,p}^{(s)} \rbrace_{s > 3n}$ modelled over the family of groups $\lbrace \SL_{n+1} (\mathbb{F}_p [t] / \langle t^s \rangle) \rbrace_{s >3n}$ using a slight variation of the construction in \cite{KO-construction}.   For this construction,  our main result in this paper is the following:
\begin{theorem}[Main Theorem - informal,  see formal Theorem \ref{vanishing of coho for X_n,p^s thm}]
Let $n \geq 3$ and $\Lambda$ a finite group.   For every prime $p$ that is large enough with respect to $n$ and $\vert \Lambda \vert$,  the family $\lbrace X_{n,p}^{(s)} \rbrace_{s > 3n}$ has uniformly bounded degree and (uniformly) $1$-coboundary expansion over $\Lambda$.
\end{theorem}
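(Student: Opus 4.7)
My plan is to prove quantitative vanishing of first non-abelian cohomology: given a $\Lambda$-valued 1-cochain $f$ on $E(X_{n,p}^{(s)})$ whose coboundary is nontrivial on at most an $\varepsilon$-fraction of $2$-faces, construct a $0$-cochain $g:V \to \Lambda$ such that $g(u)\, f(u,v)\, g(v)^{-1} = 1_{\Lambda}$ on at least a $(1-C\varepsilon)$-fraction of edges, with $C = C(n,|\Lambda|)$ independent of $s$. This is the standard reformulation of $1$-coboundary expansion over $\Lambda$ and is the quantity the theorem is really about.

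The first step is to make the algebraic structure of $X_{n,p}^{(s)}$ explicit and exploit it. It is a coset complex of $G_s := \SL_{n+1}(\mathbb{F}_p[t]/\langle t^s \rangle)$ modulo subgroups generated by elementary root subgroups, with $G_s$ acting simplicially and with finitely many orbits; in particular vertex links are spherical buildings of type $A_{n-1}$ over $\mathbb{F}_p$. The canonical surjection $G_s \twoheadrightarrow G_1 = \SL_{n+1}(\mathbb{F}_p)$ induced by $t \mapsto 0$ should give a simplicial projection $\pi : X_{n,p}^{(s)} \to X_{n,p}^{(1)}$ onto (a variant of) the type-$A_n$ spherical building over $\mathbb{F}_p$, whose $\Lambda$-coboundary expansion for large $p$ is classical or directly verifiable from the known simple-connectivity and spectral properties of such buildings.

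The key novel step, which is what makes the argument ``global'' in the sense of the abstract, is to trivialize $f$ by descending along $\pi$ rather than by patching local fillings from links. The fibers of $\pi$ are acted on by the nilpotent kernel $N_s := \ker(G_s \to G_1)$, which carries a natural central series of length $s$ whose subquotients are abelian and homogeneous. I would process this filtration layer by layer: on each abelian layer, conjugate $f$ by the portion of $g$ already constructed, trivialize the resulting essentially-abelian cocycle exactly on that layer using a Garland-type spectral argument (which is legitimate since the coefficients restricted to a single layer are abelian), and pass to the next layer; finally trivialize the residue on the base $X_{n,p}^{(1)}$ using known spherical-building coboundary expansion. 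Composing the layerwise corrections gives the global $g$.

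The main obstacle is controlling the accumulated error along the $s$-step filtration uniformly in $s$: an additive contribution of error at each layer degenerates as $s \to \infty$, so each descent step must produce an \emph{exact} trivialization on its layer, not merely an approximate one. The hypothesis that $p$ is large with respect to $n$ and $|\Lambda|$ should enter precisely here, either through Lang--Steinberg-type surjectivity inside $G_s$ or through spectral gaps in the links wide enough to force any $\Lambda$-valued $1$-cocycle on an abelian layer to be an honest coboundary rather than a mere small cohomology class. If this can be arranged, then all of the $\varepsilon$-error is concentrated in the single base-level step on $X_{n,p}^{(1)}$, yielding the required linear-in-$\varepsilon$ bound uniformly in $s$ and, consequently, genuine coboundary expansion (not merely cosystolic expansion) over any finite $\Lambda$.
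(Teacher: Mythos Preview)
Your proposal has a genuine gap at its core. You correctly identify that each descent step must produce an \emph{exact} trivialization, but the mechanism you invoke for this---a ``Garland-type spectral argument'' on abelian layers---cannot deliver it. Garland's method shows that a large spectral gap in links forces vanishing of $H^1$ with coefficients in a unitary representation over $\mathbb{R}$; it says nothing about $H^1$ with finite (even abelian) coefficients. Spectral and local-to-global arguments over finite $\Lambda$ yield only cosystolic expansion, and the whole point of the paper is that a separate, genuinely global argument is needed for cohomology vanishing. So your layer-by-layer descent cannot be made exact by spectral means, and without exactness the error blows up in $s$, as you yourself note. The fallback ``Lang--Steinberg-type surjectivity'' is not a mechanism for $H^1(X,\Lambda)=0$ either. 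There is also a structural problem with your base: $X_{n,p}^{(1)}$ is not defined in this construction (one needs $s>3n$ for the $K_i$ to embed), and the natural candidate is the coset complex of $\SL_{n+1}(\mathbb{F}_p)$ with respect to unipotent radicals, not maximal parabolics---this is not the spherical building, and its $\Lambda$-coboundary expansion is not classical.

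The paper's route is quite different in both direction and nature. It first splits the problem: $1$-coboundary expansion $=$ $1$-cosystolic expansion $+$ $H^1(\,\cdot\,,\Lambda)=0$, and the cosystolic part is already known from prior work (upgraded to non-abelian $\Lambda$ by Dikstein--Dinur). So only the \emph{qualitative} vanishing $H^1(X_{n,p}^{(s)},\Lambda)=0$ is needed, with no $\varepsilon$-bookkeeping. For this the paper goes in the opposite direction from you: it starts from the simply connected universal cover $\widetilde{X}$ of the coset complex over $\SL_{n+1}(\mathbb{F}_p[t])$ and descends to $X_{n,p}$ and then to $X_{n,p}^{(s)}$ by a sequence of quotients. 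The key lemma is that if $H^1(X,\Lambda)=0$ and every homomorphism $N\to\Lambda$ is trivial, then $H^1(N\backslash X,\Lambda)=0$; each $N$ in the chain is arranged to be generated by elements of order $p$, so the hypothesis holds once $p>|\Lambda|$. This is where largeness of $p$ enters, and it is a group-theoretic, not spectral, fact. Producing the intermediate quotients between $\widetilde{X}$ and $X_{n,p}$ is the delicate part, done via a combinatorial propagation in the $A_n$ root system and an analysis of ``pre-chamber'' subgroups---none of which appears in your plan.
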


The main difficulty in the proof of our main Theorem is showing vanishing of cohomology with respect to a non-Abelian group $\Lambda$ - this is due to the fact that cosystolic expansion for such groups is known via a combination of previous work. Thus, once we prove vanishing of cohomology the previous known cosystolic result for coset complexes could be upgraded to a coboundary expansion result.  We note that vanishing of cohomology with finite coefficients is inherently a global property and cannot be attained via local to global considerations. This is the main challenge we are facing, as most developed techniques within high dimensional expansion are of local to global nature, but a local to global argument can not imply vanishing of cohomology when working with finite coefficients.  We note that our proof of this result is fairly elementary and self-contained - we only use some external results regarding the presentation of $\SL_{n+1} (\mathbb{F}_p [t])$    and some of its subgroups in terms of generators and relations.  Notably,  unlike similar results obtained in \cite{BLM, DDL} (for a completely different family of complexes),  we do not use any sophisticated tools from the theory of algebraic groups nor do we need the strong results of the congruence subgroup property and the strong approximation theorem.

\subsection{Significance of coboundary expansion over the symmetric group within CS}
In the following,  we discuss the notions of coboundary expansion, cosystolic expansion and agreement expansion and their relation to PCP construction.

\textbf{Coboundary expansion. } Coboundary expansion  is a topological notion of high dimensional expansion that was introduced by Gromov \cite{Grom} and independently by Linial and Meshulam \cite{LM}.  Gromov has studied this notion since he has shown that coboundary expansion over $\mathbb{F}_2$ of a simplicial complex implies the topological overlapping property of the complex.  Gromov knew how to prove this topological expansion property for complexes with small diameter, but his main interest was to obtain it for complexes with unbounded diameter,  such as the Ramanujan complexes defined in \cite{LSV2,  LSV1} \footnote{When we refer to Ramanujan complexes below,  any quotient of an affine $\widetilde{A}_n$ building with sufficiently large injectivity radius can be used.} since he wanted to show the existence of bounded degree complexes with the topological overlapping property. This question of Gromov was resolved by \cite{KKL,EK} using a relaxed notion to coboundary expansion called cosystolic expansion.

\textbf{Cosystolic expansion.} Cosystolic expansion is the requirement that an approximate cocycle (i.e.,  a cochain whose coboundary is small) is close to a genuine cocycle. Coboundary expansion is equivalent to cosystolic expansion plus additional requirement of vanishing of the relevant cohomologies.  Importantly, cosystolic expansion is a global testability question that could be deduced by local to global means (see \cite{KKL,EK} that have shown cosystolic expansion with $\mathbb{F}_2$ coefficients). However,  in contrast to cosystolic expansion, coboundary expansion is a global property that can not be deduced by local to global means. The techniques to get cosystolic expansion with $\mathbb{F}_2$ coefficients by local to global means were also pivotal in obtaining good LTCS and good qLDPCs \cite{EKZ,  DELLM,  PK}.

\textbf{Coset complexes.} The Ramanujan complexes were shown to be bounded degree cosystolic expanders,  but their construction was rather involved.  This raised the question of  the existence of bounded degree cosystolic expanders with an elementary construction.  The first construction of bounded degree spectral high dimensional expanders was given in  \cite{KO-construction} using the idea of coset complexes.  This idea was generalized in \cite{KO-construction2, O'DP,  KacM-complexes} to other similar constructions of bounded degree spectral high dimensional expanders.  It was later shown in \cite{KO-cobound},  that the complexes constructed in \cite{KO-construction} are also give rise bounded degree cosystolic expanders over $\mathbb{F}_2$ in dimension 2 (it is conjectured that this result can be generalized to higher dimensions).  These construction remain one of the main sources of examples for HDX's (e.g.,  see \cite{NewCodes}).

\textbf{Agreement expansion in the high and low soundness regimes. } In another line of work within CS \cite{DK} have studied the notion of agreement expansion and have shown that high dimensional \textbf{spectral} expanders are agreement expanders in the high soundness regime; Agreement expansion in the high soundness regime asks roughly for the following: Given a binary assignment to the vertices of each k face in a complex of dimension $n >k$ such that almost all k-assignments are agreeing on their intersections, is it the case that there exists a global binary function on all the vertices of the complex whose restriction to the different k-faces agrees with most assignments to the k-faces. This question is related to PCPs and is called \textit{agreement expansion in the high soundness regime}.  However for PCPs constructions one needs a stronger requirement called \textit{agreement expansion in the low soundless regime}. Agreement expansion in the low soundless regime roughly requires that if 1 percent of the intersection of the k-faces are agreeing then there exists a global function that is consistent with a constant fraction of the k-sets.  In \cite{DK} it was shown that spectral high dimensional expansion is sufficient for agreement expansion in the high soundness regime and they have conjectured that it should imply also the low soundness case.  So the state of things until recently has been that agreement expansion in the high soundness case is implied by spectral high dimensional expansion and robustness of codes and topological overlapping property are implied by cosystolic expansion (i.e. by topological expansion) or similar variants (see also \cite{FK}).

\textbf{List agreement expansion and the bridge between spectral and topological high dimensional expansion. } The work of \cite{GK} has recently defined a stronger version of the agreement expansion that also needs topological expansion in additional to spectral high dimensional expansion.  They have defined the notion of list agreement expansion ; in which the agreement expansion question in the high soundness regime is changed so that the input on each k-face is
a list of $l$ different assignments to the k-face (and not only one) and the question is the following : if a typical pair of k-faces that intersect are agreeing on their whole list then there exist $l$-global functions that are consistent with most assignments of $l$-lists on $k$-faces.  In \cite{GK} it was also shown that if the complex is both a spectral high dimensional expander and a coboundary expander over the symmetric group then it has the list-agreement expansion property in the high soundness regime.  Then the following question arises: \textit{what is the relation between agreement expansion in the low soundness regime,  that is useful towards PCP construction and the list-agreement expansion in the high soundness regime?}

\textbf{Reduction from agreement with low soundness to list agreement with high soundness.} In two independent works \cite{BM} and \cite{DD-swap} have shown that there is a reduction from agreement expansion in the low soundness regime to list agreement expansion in the high soundness.  Namely one can solve low soundness agreement by translating it to high soundness agreement on lists (with additional requirement that the coboundary expansion constant in links is independent of the complex dimension). As was explained, high soundness agreement on lists can be solved by coboundary expansion of the complex and by spectral expansion of it.

\textbf{The Path towards PCP construction. } After those works that have shown a reduction from agreement expansion in the low soundness to list agreement expansion in the high soundness regime, the main challenge towards obtaining bounded degree complexes that supports agreement expansion in the low soundness regime has been in coming up with a bounded degree complexes that are coboundary expanders over the symmetric group. \cite{BLM, DDL} have shown the existence of such complexes based on an earlier work of \cite{CL}. Thus, these works have shown the existence of bounded degree complexes that support agreement expansion in the low soundness regime. All these efforts have culminated in the \cite{BMV} that showed that a complex which supports agreement expansion in the low soundness regime gives rise to an efficient PCP by routing through it existing local PCP.

\textbf{On the implications of our work. } Based on the previous discussion,  the main global challenge to get a PCP from a complex with expanding links is in proving the global property that this complex is a coboundary expander over the symmetric group. The main difficulty is that coboundary expansion of the complex is a property that can not be implied by local to global means that are abundant in the high dimensional expansion literature.

We see the significance of our results in three key aspects: First, proving coboundary expansion of the coset complex over the symmetric group.  Second, solving the main global challenge towards constructing PCPs based on the coset complex (although the local problem of providing coboundary expansion constants of the links that are independent of the degree is still open). Third,  we provide a novel global technique for proving vanishing of the first cohomology over the symmetric group (in contrast to other techniques for establishing spectral expansion or cosystolic expansion that are of local nature).  We believe that our work can be generalized to the other constructions of spectral expanders using coset complexes in \cite{O'DP}.

\subsection{Comparison to similar results}

It is intriguing to compare our proof of the vanishing of cohomology with recent results in \cite{BLM, DDL}. Given a finite group $\Lambda$ both our work and \cite{BLM, DDL} construct a family of high-dimensional expanders with trivial 1-cohomology relative to $\Lambda$-coefficients, following a similar outline. The approach begins with an "algebraic" simplicial complex that has trivial 1-cohomology with respect to $\Lambda$,  and then passes to a family of high-dimensional expanders by taking quotients via congruence subgroups, which also have trivial 1-cohomology with $\Lambda$-coefficients.

Despite this shared framework, the methods and challenges in the proofs differ significantly. In \cite{BLM, DDL}, the initial complex is a contractible symplectic
$\widetilde{C}_n$-building, which has trivial cohomology for any group. The difficulty lies in proving that the congruence subgroups exhibit trivial 1-cohomology with respect to $\Lambda$,  requiring deep results from group theory, such as the congruence subgroup property and the strong approximation theorem.

In contrast, our work begins with a coset complex over $\SL_{n+1} (\mathbb{F}_p [t])$.  For $n \geq 3$,  the explicit description of the congruence subgroups, which follows from the fact that $\SL_{n+1} (\mathbb{F}_p [t])$ has the same presentation as  a Steinberg group, makes the vanishing of 1-cohomology for congruence subgroups with $\Lambda$-coefficients almost trivial when $p > \vert \Lambda \vert$.  This is a straightforward argument requiring no deep theoretical results. Most of our effort focuses on proving that the coset complex over $\SL_{n+1} (\mathbb{F}_p [t])$ has trivial 1-cohomology with respect to $\Lambda$ (unlike the work in \cite{BLM, DDL}, the coset complex in our case is not simply connected).

\subsection{Proof overview}

In order to show that a family of complexes $\lbrace X^{(s)} \rbrace$ has $1$-coboundary expansion over a group $\Lambda$,  one needs to show two things:  First,  that the family has $1$-cosystolic expansion over $\Lambda$.  Second,  that for every $s$,  $H^1 (X^{(s)}, \Lambda) =0$.

In \cite{KO-cobound} it was shown that the Kaufman-Oppenheim complexes (with large enough prime $p$) have $1$-cosystolic expansion over $\mathbb{F}_2$.  The new results of Dikstein and Dinur in \cite{DD-cosys, DD-swap} show that the same proof given in \cite{KO-cobound} actually show that the Kaufman-Oppenheim complexes have $1$-cosystolic expansion over any group $\Lambda$ (see a more detailed explanation of this point in  \Cref{The Kaufman-Oppenheim coset complexes subsec} below).

Thus we are left to show a vanishing of cohomology result for the coset complexes in our construction.  This is done via the following steps:
\paragraph*{\textbf{Vanishing of cohomology for a quotient.}} Let $X$ be a simplicial complex and $N$ a group acting simplicially on $X$.  The quotient complex, denoted $N \backslash X$,  is defined via identifying all the vertices of $X$ that are in the same orbit of $N$.  Under some mild conditions on $X$ and on the action of $N$,  the following result holds for every group $\Lambda$:  If both $H^1 (X, \Lambda) =0$ and $H^1 (N, \Lambda)  =0$, then $H^1 (N \backslash X,  \Lambda) =0$ (see exact formulation in Theorem \ref{N mod X thm} below).

\paragraph*{\textbf{Passing to the coset complex of $ \SL_{n+1} (\mathbb{F}_p [t] )$.}} It was already observed in \cite{KO-construction2} that the Kaufman-Oppenheim coset complexes over the family of groups $\lbrace \SL_{n+1} (\mathbb{F}_p [t] / \langle t^s \rangle) \rbrace_{s >3n}$ are actually all quotients of a coset complex modelled over $\SL_{n+1} (\mathbb{F}_p [t])$.  Namely,  for any $n,p$, there are coset complexes $X_{n,p}$  modelled over $\SL_{n+1} (\mathbb{F}_p [t])$ and normal subgroups  $\Gamma_{n,p}^s  \triangleleft \SL_{n+1} (\mathbb{F}_p [t])$ such that for any $s >3n$,  $X_{n,p}^{(s)}  = \Gamma_{n,p}^s  \backslash X_{n,p}$.  We will see that if $\Lambda$ is a group that has no non-trivial element of order $p$,  then $H^1 (\Gamma_{n,p}^s,  \Lambda) = 0$ for every $s$.  In particular,  if $\Lambda$ is finite and $p > \vert \Lambda \vert $,  then $H^1 (\Gamma_{n,p}^s,  \Lambda) = 0$ for every $s$.  By the discussion above,  if we can show that for such $p > \vert \Lambda \vert$,  it also holds that $H^1 (X_{n,p},  \Lambda) =0$, then it will follow that $H^1 (X_{n,p}^{(s)}, \Lambda) =0$ for every $s >3n$.

\paragraph*{\textbf{Vanishing of cohomology for $X_{n,p}$.}} Fix $n \geq 3$ and $p$ an odd prime.  To avoid cumbersome notation,  we will denote $X = X_{n,p}$.  By the previous paragraph,  we need to show that for any finite group $\Lambda$,  if $p > \vert \Lambda \vert$,  then $H^1 (X, \Lambda)=0$.  Had we known that $X$ is simply connected,  then we would be done,  since for every simply connected complexes, the $1$-cohomology vanishes with respect to any group $\Lambda$.  However,  $X$ is a coset complex and there is a characterization of simple connectedness for coset complexes (see \cite{AbelsH} and further discussion below) that $X$ does not meet.  Fortunately,  the universal cover of $X$,  denoted $\widetilde{X}$,  has an explicit description as a coset complex.  Namely,  there is an explicit abstract group $\widetilde{\Gamma}$ and a simply connected coset complex $\widetilde{X}$ modelled over $\widetilde{\Gamma}$ such that $\widetilde{\Gamma} \rightarrow \SL_{n+1} (\mathbb{F}_p [t])$ is a surjective homomorphism that induces a covering map $\widetilde{X} \rightarrow X$.  The group $\widetilde{\Gamma}$ is given in terms of generators and relations such that the generators can be identified with a generating set of $\SL_{n+1} (\mathbb{F}_p [t])$ and the set of relations of $\widetilde{\Gamma}$ is a partial set of the relations in a finite presentation of $\SL_{n+1} (\mathbb{F}_p [t])$.  We will refer to the relations in the presentation of $\SL_{n+1} (\mathbb{F}_p [t])$ that do not appear in the presentation of $\widetilde{\Gamma}$ as the ``missing relations''.   Adding relations to $\widetilde{\Gamma}$ and considering the coset complex with the added relations is equivalent to passing to a quotient of $\widetilde{X}$ by the normal subgroup generated by the added relations.  The crux of our proof is that we can add the missing relations to   $\widetilde{\Gamma}$ in an iterative process,  such that the normal group that we divide by has no $\Lambda$ $1$-cohomology.  Thus,  we start with $H^1 (\widetilde{X},  \Lambda) =0$ (since $\widetilde{X}$ is simply connected) and pass in a sequence of quotients $X_0 = \widetilde{X},  X_1 = N_0 \backslash X_0,..., X_m = N_{m-1} \backslash X_{m-1}$ such that $X_m = X$ and in each step $H^1 (N_i,  \Lambda) =0$ and thus $H^1 (X_{i+1},  \Lambda) =0$ by the discussion above.  This result builds on identifying the relations in $\SL_{n+1} (\mathbb{F}_p [t])$ as pairs of roots in the root system $A_n$ and establishing a combinatorial propagation result in the chambers of $A_n$ (passing from the relations in $\widetilde{\Gamma}$ to the missing relations).

\section{Background}

\subsection{Simplicial complexes}

An $n$-dimensional simplicial complex $X$ is a hypergraph whose hyperedges of maximal size are of size $n+1$, and which is closed under containment. Namely, for every hyperedge $\tau$ (called a face) in $X$, and every $\eta\subset\tau$, it must be that $\eta$ is also in $X$. In particular, $\emptyset \in X$. For example, a graph is a $1$-dimensional simplicial complex. Let $X$ be a simplicial complex, we fix the following terminology/notation:
\begin{enumerate}
\item The simplicial complex $X$ can be thought of as a set system of $V (X)$,  where $V (X)$ is called the \textit{vertex set} of $X$ and every element of $V(X)$ is called a vertex of $X$.  By abuse on notation,  below,  we will use $V (X)$ and $X (0)$ interchangeably although they are formally different (formally,  $X (0) = \lbrace \lbrace v \rbrace : v \in V (X) \rbrace$.
\item The simplicial complex $X$ is called {\em pure $n$-dimensional} if every face in $X$ is contained in some face of size $n+1$.
\item The set of all $k$-faces (or $k$-simplices) of $X$ is denoted $X(k)$, and we will be using the convention in which $X(-1) = \{\emptyset\}$.
\item For $0 \leq k \leq n$, the $k$-skeleton of $X$ is the $k$-dimensional simplicial complex $X(0) \cup X(1) \cup ... \cup X(k)$. In particular, the $1$-skeleton of $X$ is the graph whose vertex set is $X(0)$ and whose edge set is $X(1)$.
\item For a simplex $\tau \in X$, the link of $\tau$, denoted $X_\tau$ is the complex
$$\lbrace \eta \in X : \tau \cup \eta \in X, \tau \cap \eta = \emptyset \rbrace.$$
We note that if $\tau \in X(k)$ and $X$ is pure $n$-dimensional, then $X_\tau$ is pure $(n-k-1)$-dimensional.
\item Given a pure $n$-dimensional simplicial complex,  the \textit{weight function} $ w : \bigcup_{k=-1}^n X(k) \rightarrow \mathbb{R}_+$ is defined to be
$$\forall \tau \in X(k), w(\tau) = \frac{\vert \lbrace \sigma \in X(n) : \tau \subseteq \sigma \rbrace \vert}{{n+1 \choose k+1} \vert X(n) \vert}.$$
\item A family of pure $n$-dimensional simplicial complexes  $\lbrace X_{(s)} \rbrace_{s \in \mathbb{N}}$ is said to have bounded degree if there is a constant $L>0$ such that for every $s \in \mathbb{N}$ and every vertex $v$ in $X^{(s)}$, $v$ is contained in at most $L$ $n$-dimensional simplices of $X^{(s)}$.
\item An $n$-dimensional simplicial complex $X$ is called {\em partite} (or colorable) if its set of vertices can be partitioned into $n+1$ disjoints sets $\mathcal{S}_0,...,\mathcal{S}_n$ that are called the \textit{sides of $X$} such that for every $0 \leq i \leq n$ and every $v, u \in \mathcal{S}_i$,  $ \lbrace u,v \rbrace \notin X(1)$.  Equivalently,  the vertices of $X$ can be colored in $n+1$ colors such that vertices of the same color are not connected by an edge.  If $X$ is pure $n$-dimensional,  this is equivalent to the condition that for every $\sigma \in X (n)$ and every $0 \leq i \leq n$,  $\vert \sigma \cap S_i \vert =1$ (or equivalently, that the vertices of $X$ can be colored in $n+1$ colors such that every $n$-simplex has vertices of all colors).
\item Given a constant $0 \leq \lambda <1$,  a pure $n$-dimensional simplicial complex $X$ is said to have $\lambda$-one-sided local spectral expansion,  if $X$ and all its links are connected and for every $-1 \leq k \leq n-2$ and every $\tau \in X(k)$,  the random walk on the $1$-skeleton on $X_\tau$ has a spectral gap $\geq 1-\lambda$ (the random walk is weighted according to the weight function $w$ - for an exact definition see \cite{KO-construction}).
\end{enumerate}

Let $X,  Y$ be two $n$-dimensional simplical complexes and $f: V (X) \rightarrow V (Y)$ a function.
\begin{enumerate}
\item The map $f$ is called \textit{simplicial} if for every $\sigma \in X$ it holds that $f (\sigma) \in Y$.  In that case, we also denote $f: X \rightarrow Y$.  Note that $f (\sigma)$ need not be a simplex in the same dimension as the dimension of $\sigma$.
\item The map $f$ is called \textit{rigid} if it is simplicial and for every $\sigma \in X$,  $f (\sigma)$ is of the same dimension as $\sigma$,  i.e., $\vert f (\sigma) \vert = \vert \sigma \vert$.
\item The map $f$ is called an \textit{isomorphism} if it is injective,  simplicial and $f^{-1}$ is also simplicial (note that in particular it must hold that $f,  f^{-1}$ are rigid).
\item If $X$ and $Y$ are partite with sides $\mathcal{S}_0 (X),...,\mathcal{S}_n (X)$ and $\mathcal{S}_0 (Y),...,\mathcal{S}_n (Y)$ correspondingly.  The function $f$ is called \textit{color preserving} if it is simplicial and for every $0 \leq i \leq n$,  $f (\mathcal{S}_i (X)) \subseteq \mathcal{S}_i (Y)$.
\end{enumerate}

\begin{observation}
For $X,Y,  f$ as above,  if $X$ and $Y$ are partite and $f$ is color preserving,  then it is rigid.
\end{observation}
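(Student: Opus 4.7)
The plan is to unpack the definitions and observe that rigidity follows immediately from the fact that the sides of a partite complex partition the vertex set, combined with $f$'s color preservation. The key observation is that in a partite complex, the vertices of any simplex automatically lie in distinct sides, so they carry pairwise distinct colors.

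First I would fix an arbitrary simplex $\sigma \in X$ and recall that $f$ is already assumed simplicial, so $f(\sigma) \in Y$; thus the only thing to check is that $|f(\sigma)| = |\sigma|$, i.e., that $f$ restricted to $\sigma$ is injective. Take two distinct vertices $u, v \in \sigma$. Since $\lbrace u, v \rbrace \in X(1)$ (as $X$ is closed under containment) and $X$ is partite, $u$ and $v$ must lie in different sides $\mathcal{S}_i(X)$ and $\mathcal{S}_j(X)$ with $i \neq j$, because two vertices of the same side are never joined by an edge.

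Next, since $f$ is color preserving, $f(u) \in \mathcal{S}_i(Y)$ and $f(v) \in \mathcal{S}_j(Y)$. The sides $\mathcal{S}_0(Y), \dots, \mathcal{S}_n(Y)$ of $Y$ are disjoint by the definition of a partite complex, so $f(u) \neq f(v)$. This shows $f|_\sigma$ is injective, so $|f(\sigma)| = |\sigma|$, and since $\sigma$ was arbitrary, $f$ is rigid.

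I do not anticipate any obstacle here: the statement is essentially a tautological consequence of the definitions of \emph{partite} and \emph{color preserving}. The only small subtlety worth mentioning is that one uses closure under containment to conclude that any pair of vertices of $\sigma$ spans an edge of $X$, which is what triggers the partiteness condition forcing them into distinct sides.
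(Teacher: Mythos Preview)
Your proof is correct; the paper states this as an observation without proof, treating it as immediate from the definitions, and your argument is exactly the natural unpacking of those definitions.
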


Given a simplicial complex $X$,  we denote $\Aut (X)$ to be the group of all automorphisms of $X$,  i.e., the group of all the functions $f: X \rightarrow X$ that are isomorphisms with the binary action of composition.  For a group $G$,  we say that there is a \textit{group action of $G$ on $X$ }(or that \textit{$G$ acts on $X$}) if there is a homomorphism $\pi : G \rightarrow \Aut (X)$.  We say that the action of $G$ on $X$ is \textit{color preserving} if for every $g \in G$,  $\pi (g)$ is color preserving.   Below,  the homomorphism $\pi$ will usually be implicit and we will simply treat each $g \in G$ as an isomorphism $g : X \rightarrow X$ and write $g.\sigma$ to denote the action of an element $g \in G$ on $\sigma \in X$.

Given a group $G$ acting on $X$,  the \textit{quotient} the action of $X$ is the simplicial complex denoted $G \backslash X$ that is defined explicitly as follows: Define an equivalence relation $\sim$ on $V (X)$ by $v \sim g.v$ for every $g \in G$, i.e., two vertices are equivalent if they are in the same orbit of the action .  The vertex set $V (G \backslash X)$ is the set of equivalence classes $\lbrace [v] : v \in V (X) \rbrace$ under this relation (equivalently, $[v]$ is the orbit of $v$ under the action of $G$).  For $[v_0],...,[v_k] \in V(G \backslash X)$ it holds that $\lbrace [ v_0 ],..., [v_k ] \rbrace \in (G \backslash X) (k)$ if and only if there are $u_i \in [v_i]$ for every $0 \leq i \leq k$ such that $\lbrace u_0,...,u_k \rbrace \in X(k)$.  The \textit{projection} $p : X \rightarrow G \backslash X$ is defined to be the map induced by $p (v) = [v]$ for every $v \in V (X)$.  We note that $p$ is always surjective and simplicial, but needed not be rigid.

\begin{observation}
\label{color presev action is rigid obs}
For $X$ and $G$ as above,  if $X$ is $n$-dimensional and partite and the action of $G$ on $X$ is color preserving,  then $G \backslash X$ is also pure $n$-dimensional and partite and $p: X \rightarrow G \backslash X$ is color preserving and in particular,  rigid.
\end{observation}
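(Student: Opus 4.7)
The plan is to unwind the definitions and transfer the partite structure from $X$ to $G\backslash X$, then deduce purity and rigidity as consequences. Concretely, let $\mathcal{S}_0(X),\dots,\mathcal{S}_n(X)$ be the sides of $X$. First I would define candidate sides for the quotient by
$$ \mathcal{S}_i(G\backslash X) \;=\; \{[v] : v \in \mathcal{S}_i(X)\},\qquad 0 \leq i \leq n. $$
The color-preserving hypothesis $\pi(g)(\mathcal{S}_i(X)) \subseteq \mathcal{S}_i(X)$ ensures that if $v \in \mathcal{S}_i(X)$ then every element of the orbit $[v]$ lies in $\mathcal{S}_i(X)$, so the equivalence classes respect colors and the sets $\mathcal{S}_i(G\backslash X)$ partition $V(G\backslash X)$.

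Next I would verify that this really makes $G\backslash X$ partite. Suppose $[v],[u]\in \mathcal{S}_i(G\backslash X)$ with $[v]\neq[u]$ and $\{[v],[u]\}\in (G\backslash X)(1)$. By the definition of the quotient, there exist representatives $v'\in[v]$, $u'\in[u]$ with $\{v',u'\}\in X(1)$; but by the previous paragraph $v',u'\in\mathcal{S}_i(X)$, contradicting the partiteness of $X$. So no edge of $G\backslash X$ lies inside a single side, i.e., $G\backslash X$ is partite with the proposed sides. Directly from the definition $p(v)=[v]$, the map $p$ sends $\mathcal{S}_i(X)$ into $\mathcal{S}_i(G\backslash X)$ for every $i$, so $p$ is color-preserving.

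Now I would invoke the preceding unlabeled observation, that any color-preserving simplicial map between partite complexes is rigid, to conclude that $p$ is rigid. Finally, purity of $G\backslash X$ follows from rigidity of $p$ plus purity of $X$: any $k$-simplex $\eta\in(G\backslash X)(k)$ lifts (by the definition of the quotient) to a $k$-simplex $\sigma\in X(k)$; since $X$ is pure $n$-dimensional, $\sigma$ is contained in some $\sigma'\in X(n)$, and rigidity of $p$ gives $p(\sigma')\in (G\backslash X)(n)$ with $\eta\subseteq p(\sigma')$.

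There is no real obstacle here; the proof is a routine bookkeeping exercise, and the only place one has to be careful is in using color-preservation twice, once to see that orbits lie in a single side (so the sides of the quotient are well-defined) and once, contrapositively, to promote the partiteness of $X$ to that of $G\backslash X$.
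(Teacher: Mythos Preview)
Your argument is correct and is exactly the routine verification one would expect; the paper itself does not supply a proof for this observation, treating it as an immediate consequence of the definitions. (One small remark: you use purity of $X$ to get purity of $G\backslash X$, whereas the stated hypothesis in the paper only says ``$n$-dimensional''; this is almost certainly a typo in the paper and your reading is the intended one.)
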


\subsection{The $A_n$ root system}

\label{A_n subsec}

Here,  we set some definitions and notations regarding the $A_n$ root system that we will use in the sequel.  Our use of the $A_n$  root system terminology is self-contained and we do not assume any familiarity with this subject.  The reader should note that our definitions are somewhat non-standard (although equivalent to the standard definitions that are usually given in the language of vectors).

Let $n \geq 2$.  We consider the \textit{$A_n$ root system} as the set of pairs $\lbrace (i,j) : 1 \leq i, j \leq n+1,  i \neq j \rbrace$,  where each pair $(i,j)$ is called a \textit{root}.  We fix the following definitions:
\begin{enumerate}
\item Two roots $(i,j)$ and $(i',j')$ are called \textit{opposite} if $(i,j) = (j',i')$.
\item The set of roots  $C_0 = \lbrace (i,j) : 1 \leq i < j \leq n+1 \rbrace$ is defined to be the \textit{positive Weyl chamber}  and each pair $(i,j)$,  $1 \leq i < j \leq n+1$ is a \textit{positive root}.
\item The \textit{boundary of $C_0$} is the set $\partial C_0 = \lbrace (i,i+1) : 1 \leq i \leq n \rbrace \subseteq C_0$.
\item For every permutation $\gamma \in \Sym \lbrace 1,..., n+1 \rbrace$,  we define the Weyl chamber $C_{\gamma} = \lbrace (\gamma (i),  \gamma (j)) : 1 \leq i < j \leq n+1 \rbrace$.  With this notation $C_e = C_0$ (where $e \in \Sym \lbrace 1,..., n+1 \rbrace$ is the trivial permutation).  We further define the boundary of $C_{\gamma}$ to be $\partial C_{\gamma} = \lbrace (\gamma (i), \gamma (i+1)) : 1 \leq i \leq n \rbrace$.
\item The set $\lbrace C_{\gamma} : \Sym \lbrace 1,..., n+1 \rbrace \rbrace$ is the set of all Weyl chambers of the system.
\item For every $\gamma,  \gamma ' \in \Sym \lbrace 1,..., n+1 \rbrace$,  we define $\gamma ' . C_{\gamma} = C_{\gamma '\gamma}$ and note that this is in fact an action of $\Sym \lbrace 1,..., n+1 \rbrace$ on the set of Weyl chambers.
\end{enumerate}

\subsection{Coboundary and cosystolic expansion with general group coefficients}

Here we recall the basic definitions regarding $1$-coboundary and $1$-cosystolic expansion of a simplicial complex $X$ with coefficients in a general group $\Lambda$.  These definitions appeared in several works (e.g., \cite{DM}) and we claim no originality here.

Throughout,  we denote $X$ to be a finite simplicial complex that is pure $n$-dimensional (i.e.,  every simplex in $X$ is a face of an $n$-dimensional simplex).   For every $-1 \leq k \leq n$, $X(k)$ denotes the set of $k$-dimensional simplices of $X$ and $X_\ord (k)$ to be ordered $k$-simplices.  We use the convention in which $X(-1) =  \lbrace  \emptyset  \rbrace$.

Given a group $\Lambda$,  we define the space of $k$-cochains for $k=-1,0,1$ with values in $\Lambda$ by
$$C^{-1} (X,  \Lambda) = \lbrace \phi : \lbrace \emptyset \rbrace  \rightarrow \Lambda \rbrace,$$
$$C^{0} (X,  \Lambda) = \lbrace \phi : X(0) \rightarrow \Lambda \rbrace,$$
$$C^1 (X,  \Lambda) = \lbrace \phi :   X_{\ord} (1) \rightarrow \Lambda : \forall (u,v) \in X_{\ord} (1),  \phi ((u,v)) = ( \phi ((v,u)))^{-1} \rbrace.$$
We note that we do not assume that $\Lambda$ is abelian or finite.

For $k=-1,0$,  we define the coboundary maps $d_k : C^{k} (X,  \Lambda) \rightarrow C^{k+1} (X,  \Lambda)$ as follows: First,  for every $\phi \in C^{-1} (X, \Lambda)$ and every $v \in X(0)$,  we define $ d_{-1} \phi (v) = \phi ( \emptyset)$.  Second,  for every $\phi \in C^{0} (X,  \Lambda)$ and every $(v_0, v_1) \in X_\ord (1)$,  we define $d_{0} \phi ((v_0,v_1)) = \phi (v_0) \phi (v_1)^{-1}$.   Last,  we also define the map $d_1 : C^1 (X,  \Lambda) \rightarrow \lbrace \phi : X_\ord (2) \rightarrow \Lambda \rbrace$ as follows: for every $\phi \in C^{1} (X,  \Lambda)$ and every $(v_0, v_1,v_2) \in X_\ord (2)$,  we define
$d_{1} \phi ((v_0,v_1,v_2)) = \phi ((v_0,v_1))  \phi ((v_1,v_2)) \phi ((v_2,v_0))$.

We note that $d_k d_{k-1} \equiv e_\Lambda$ for $k=0,1$ and define cocycles and coboundaries by
$$Z^k (X, \Lambda) = \lbrace \phi \in C^{0} (X,  \Lambda) : d_k \phi \equiv e_\Lambda \rbrace,$$
$$B^k (X, \Lambda)  = d_{k-1} (C^{k-1} (X, \Lambda)).$$
We note that $B^k (X, \Lambda)  \subseteq Z^k (X, \Lambda)$ (as sets).  We further define the an action of $C^{k-1} (X, \Lambda)$ on $Z^{k} (X, \Lambda)$:
\begin{itemize}
\item An element $\psi \in C^{-1} (X, \Lambda)$ acts on $Z^{0} (X, \Lambda)$ by
$$\psi . \phi (v) = \psi (\emptyset)^{-1} \phi (v),  \forall \phi \in Z^{0} (X, \Lambda),  \forall v \in X (0).$$
\item An element $\psi \in C^{0} (X, \Lambda)$ acts on $Z^{1} (X, \Lambda)$ by
$$\psi . \phi ((v_0,v_1)) = \psi (v_0) \phi ((v_0,v_1)) \psi (v_1)^{-1},  \forall \phi \in Z^{1} (X, \Lambda),  \forall (v_0,v_1) \in X_{\ord} (1).$$
\end{itemize}

We denote by $[ \phi ]$ the orbit of $\phi \in Z^{k} (X, \Lambda)$ under the action of $C^{k-1} (X, \Lambda)$ and define the reduced $k$-th cohomology to be
$$H^k (X, \Lambda) = \lbrace [ \phi ] : \phi \in Z^{k} (X, \Lambda) \rbrace.$$
We will say that the $k$-th cohomology is trivial if $H^k (X, \Lambda) = \lbrace [ \phi \equiv e_\Lambda ]  \rbrace$ and note that the cohomology is trivial if and only if $B^k (X, \Lambda) = Z^k (X, \Lambda)$.

When $\Lambda$ is an abelian group,  $Z^k (X, \Lambda),  B^k (X, \Lambda)$ are abelian groups and for $k=0,1$,  the $k$-th reduced cohomology is defined by $H^k (X,  \Lambda) = Z^k (X, \Lambda) / B^k (X, \Lambda)$.  We leave it to the reader to verify that the definition in the abelian setting coincides (as sets) with the definition in the general setting given above.

In order to define expansion, we need to define a norm.  Let $ w : \bigcup_{k=-1}^n X(k) \rightarrow \mathbb{R}_+$ be the weight function defined above.
For $\phi \in C^0 (X, \Lambda)$,  we define $\supp (\phi) \subseteq X(0)$ as
$$\supp (\phi) = \lbrace v : \phi (v) \neq e_\Lambda \rbrace.$$
Also,  for $\phi \in C^{1} (X, \Lambda)$,  we define
$$\supp (\phi) = \lbrace \lbrace u,v \rbrace : \phi ((u,v)) \neq e_\Lambda \rbrace,$$
and
$$\supp (d_1 \phi) =  \lbrace \lbrace u,v, w \rbrace : \phi ((u,v,w)) \neq e_\Lambda \rbrace.$$
We note that for every permutation $\pi$ on $\lbrace 0,1,2\rbrace$ it holds that $d_1 \phi ((v_0,v_1,v_2)) = e_\Lambda$ if and only if $d_1 \phi ((v_{\pi (0)},v_{\pi (1)},v_{\pi (2)})) = e_\Lambda$ and thus $\supp (d_1 \phi)$ if well-defined.

With this notation,  we define the following norms: for $\phi \in  C^{k} (X, \Lambda)$,  $k=0,1$,  we define
$$\Vert \phi \Vert = \sum_{\tau \in \supp (\phi)} w (\tau).$$
Also,  for $\phi \in  C^{1} (X, \Lambda)$, we define
$$\Vert d_1 \phi \Vert = \sum_{\tau \in \supp (d_1 \phi)} w (\tau).$$

Using the notations above,  we will define coboundary and cosystolic expansion constants (generalizing the Cheeger constant) as follows: For $k =0,1$,  define
$$h^{k}_{\cobound} (X, \Lambda) = \min_{\phi \in C^{k} (X, \Lambda) \setminus B^{k} (X, \Lambda)} \dfrac{\Vert d_k \phi \Vert}{\min_{\psi \in B^k (X,\Lambda)} \Vert \phi \psi^{-1} \Vert},$$
$$h^{k}_{\cosys} (X, \Lambda) = \min_{\phi \in C^{k} (X, \Lambda) \setminus Z^{k} (X, \Lambda)} \dfrac{\Vert d_k \phi \Vert}{\min_{\psi \in Z^k (X,\Lambda)} \Vert \phi \psi^{-1} \Vert}.$$
Observe the following (for $k =0,1$):
\begin{itemize}
\item For any $X$,  $h^{k}_{\cosys} (X, \Lambda) > 0$.
\item If $H^{k}  (X, \Lambda)$ is non-trivial, then $h^{k}_{\cobound} (X, \Lambda) =0$.
\item If $H^{k}  (X, \Lambda)$ is trivial,  then $h^{k}_{\cobound} (X, \Lambda) = h^{k}_{\cosys} (X, \Lambda)$.
\end{itemize}

For a constant $\beta >0$,  we say that
\begin{itemize}
\item The complex $X$ is a \textit{$(\Lambda,  \beta)$ 1-coboundary expander},  if $h^{0}_{\cobound} (X, \Lambda) \geq \beta$ and $h^{1}_{\cobound} (X, \Lambda) \geq \beta$.
\item The complex $X$ is a \textit{$(\Lambda,  \beta)$ 1-cosystolic expander},  if $h^{0}_{\cosys} (X, \Lambda) \geq \beta$,  $h^{1}_{\cosys} (X, \Lambda) \geq \beta$ and for every $\phi \in Z^1 (X,  \Lambda) \setminus B^1 (X,  \Lambda)$ it holds that $\Vert \phi \Vert \geq \beta$.
\end{itemize}

We call a family of complexes $\lbrace X^{(s)} \rbrace_s$,  has \textit{$1$-coboundary expansion over $\Lambda$} ( \textit{$1$-cosystolic expansion over $\Lambda$}), if there exists $\beta >0$ such that for every $s$,  the complex $X^{(s)}$ is a $(\Lambda,  \beta)$ 1-coboundary expander ($(\Lambda,  \beta)$ 1-cosystolic expander).

\begin{remark}
A lower bound on $h^{0}_{\cobound}$ is usually easy to verify: If the $1$-skeleton of $X$ is a connected finite graph, then for any non-trivial group $\Lambda$, $h^{0}_{\cobound} (X, \Lambda)$ is exactly the Cheeger constant of the weighted $1$-skeleton (where each edge $\lbrace u,v \rbrace$ is weighted by $w(\lbrace u,v \rbrace$).  Moreover,  the Cheeger constant of the weighted $1$-skeleton can be bounded by the spectral gap of the weighted random walk on the $1$-skeleton, which in concrete examples can be bounded using the trickling down Theorem \cite{OppLocI}.
\end{remark}

In \cite{DD-cosys},  the following result was proven (generalizing previous results in \cite{KKL,  EK} that applied to the Abelian setting, and improving parameters over the result of \cite{KM}, that was the first co-systolic expansion result in the non-Abelian setting):
\begin{theorem}\cite[Theorem 8]{DD-cosys}
\label{DD for cosys thm}
Let $0 \leq \lambda <1,  \beta >0$ be constants and $\Lambda$ be a group.  For a pure $n$-dimensional simplicial complex $X$ with $n \geq 3$,  if for every vertex $v$,  $h^1_{\cobound} (X, \Lambda) \geq \beta$ and $X$ is a $\lambda$-one-sided local spectral expander,  then
$$h^1_{\cosys} (X, \Lambda) \geq \frac{(1-\lambda) \beta}{24} - e \lambda,$$
where $e \approx 2.71$ is the Euler constant.
\end{theorem}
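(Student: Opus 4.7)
The plan is to execute the local-to-global strategy converting coboundary expansion in every link into cosystolic expansion of the whole complex, in the style pioneered by \cite{KKL, EK} for $\mathbb{F}_2$-coefficients and generalized to arbitrary group coefficients in \cite{DD-cosys}. Take $\phi \in C^1(X, \Lambda)$ with small $\Vert d_1\phi \Vert$. For each vertex $v \in X(0)$, define a restriction $\phi|_v \in C^1(X_v, \Lambda)$ by $\phi|_v((u,w)) = \phi((u,w))$ for every pair with $\{v,u,w\} \in X(2)$. An elementary calculation shows that $d_1(\phi|_v)$ evaluated on a triangle $\{u_1,u_2,u_3\} \in X_v(2)$ equals $d_1\phi((u_1,u_2,u_3))$ for the tetrahedron $\{v,u_1,u_2,u_3\} \in X(3)$. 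Consequently, the weighted-averaging identity
$$\sum_{v \in X(0)} w(v)\,\Vert d_1(\phi|_v)\Vert_{X_v} \;\leq\; 4\,\Vert d_1\phi\Vert$$
holds, since each $3$-simplex is counted once for each of its four vertices. By the link coboundary expansion hypothesis, for every $v$ there exists $f_v \in C^0(X_v, \Lambda)$ such that $d_0 f_v = \phi|_v$ except on a set of $X_v$-weight at most $\Vert d_1(\phi|_v)\Vert/\beta$.

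The second step is to merge the local corrections $\{f_v\}$ into a single global $0$-cochain $\xi \in C^0(X, \Lambda)$. For an edge $\{u,v\} \in X(1)$ and a common neighbor $w \in X_{\{u,v\}}(0)$, the relevant comparison is the group element $f_u(w)f_v(w)^{-1}$, which is approximately independent of $w$ on pairs where both $f_u$ and $f_v$ succeed in their links. A Garland/expander-mixing argument, leveraging the one-sided $\lambda$-spectral expansion that percolates down to intermediate links via the trickling-down principle, bounds the total weight of disagreement pairs by a $1/(1-\lambda)$ factor times the previously controlled norm. A plurality-vote procedure then defines $\xi(v)$ as the most frequent value of $f_u(v)$ among neighbors $u$ of $v$ in the $1$-skeleton; the concentration of this plurality around a single group element is what produces the additive $e\lambda$ correction via a Markov-type bound in the non-abelian setting.

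Finally, one verifies that $\xi.\phi$ is close to a genuine cocycle: on most edges $\{u,v\}$, the product $\xi(v)\phi((v,u))\xi(u)^{-1}$ equals a value that closes up to $e_\Lambda$ under $d_1$ on most $2$-simplices through $\{u,v\}$. Tracking the three loss terms --- the factor $1/\beta$ from link correction, the factor $1/(1-\lambda)$ from spectral gluing, and the combinatorial overlap among weights in $X(1)$, $X(2)$, $X(3)$, and the four vertex links of each $3$-simplex --- yields the constant $24$ in the denominator. The main obstacle throughout is the non-abelian nature of $\Lambda$: all ``differences'' must be interpreted as group-theoretic products like $f_u(w)f_v(w)^{-1}$ rather than subtractions, plurality is not a group operation so one has to argue it returns a well-defined element that genuinely trivializes $\phi$ on almost all edges, and the concentration estimate producing the $e\lambda$ term must be carried out without abelian tools. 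Once these bookkeeping issues are handled, combining the three loss factors yields $h^1_{\cosys}(X, \Lambda) \geq (1-\lambda)\beta/24 - e\lambda$.
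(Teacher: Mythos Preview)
The paper does not prove this theorem at all: it is stated as a citation of \cite[Theorem 8]{DD-cosys} and invoked as a black box in the proof of Theorem~\ref{cosys exp of KO complexes thm}. There is therefore no ``paper's own proof'' to compare your proposal against.

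Your sketch does track the local-to-global architecture of \cite{KKL,EK,DD-cosys}: restrict to links, correct locally using the link coboundary hypothesis, and glue the local corrections using spectral expansion. As a roadmap this is accurate, but as written it is not a proof. Several steps are gestures rather than arguments: the claim that the plurality vote ``genuinely trivializes $\phi$ on almost all edges'' and that this is what produces the precise additive $e\lambda$ term is asserted, not derived; the bookkeeping that yields exactly the constant $24$ is not shown; and the spectral gluing step (``a Garland/expander-mixing argument \ldots\ bounds the total weight of disagreement pairs by a $1/(1-\lambda)$ factor'') hides the main technical content of \cite{DD-cosys}, namely how to control non-abelian inconsistencies across overlapping links without cancellation. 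If your goal were to supply an actual proof, each of these would have to be made precise; if your goal is merely to indicate why the cited theorem is plausible, then a one-line reference to \cite{DD-cosys} (as the paper does) is the appropriate choice.
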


\subsection{Coset complexes and quotients}

For $n \in \mathbb{N},  n \geq 1$,   a group $\Gamma$ with subgroups $K_{i} < \Gamma,  i =0,...,n$,  the \textit{coset complex} $\mathcal{CC} (\Gamma,  \lbrace K_{i}  \rbrace_{0 \leq i \leq n})$ is defined to be the partite $n$-dimensional simplcial complex with vertices $\lbrace g K_i : g \in \Gamma,  0 \leq i \leq n \rbrace$ such that for every $0 \leq k \leq n$ it holds that $\lbrace g_{i_0} K_{i_0},...,   g_{i_k} K_{i_k} \rbrace$ is a $k$-simplex if and only if $0 \leq i_0 < i_1 <...< i_k \leq n$,  $g_{i_j} \in \Gamma$ and for every $0 \leq j, l \leq k$,  $g_{i_j} K_{i_j} \cap g_{i_l} K_{i_l} \neq \emptyset$.

We note that $\Gamma$ acts on $X = \mathcal{CC} (\Gamma,  \lbrace K_{i}  \rbrace_{0 \leq i \leq n})$ by $g' . g K_{i} = g' g K_{i}$ for every $g', g \in \Gamma$ and $0 \leq i \leq n$ and that this action is color preserving and transitive on $X (n)$,  i.e., for every $\sigma, \sigma ' \in X(n)$ there exists $g \in \Gamma$ such that $g. \sigma = \sigma '$.   Similarly,  every subgroup $\Gamma ' < \Gamma$ also acts on $X$ simplicially and color preserving (but for a subgroup $\Gamma '$, the action need not be transitive on $X (n)$.

\begin{proposition}
\label{N backs X is a coset complex prop}
Let $n \in \mathbb{N},  n \geq 1$ and $\Gamma$ with subgroups $K_{i} < \Gamma,  i =0,...,n$.  Denote $X = \mathcal{CC}  (\Gamma,  \lbrace K_{i}  \rbrace_{0 \leq i \leq n })$ to be the coset complex defined above.   For a normal subgroup $N \triangleleft \Gamma$,  $N \backslash X$ is (isomorphic to) the coset complex $ \mathcal{CC}  (N \backslash \Gamma,  \lbrace N \backslash (N K_{i})  \rbrace_{0 \leq i \leq n })$.
\end{proposition}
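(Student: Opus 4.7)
The plan is to construct an explicit color-preserving simplicial isomorphism $\phi: N \backslash X \to Y$ induced by the quotient homomorphism $\pi: \Gamma \to N \backslash \Gamma$, where $Y := \mathcal{CC}(N \backslash \Gamma, \{N \backslash(N K_i)\}_{0 \leq i \leq n})$. Since $\pi$ restricts to a surjection $K_i \twoheadrightarrow N K_i / N$ and sends left $\Gamma$-cosets to left $(N\backslash\Gamma)$-cosets, the assignment $gK_i \mapsto (Ng)(NK_i/N)$ yields a well-defined, color-preserving simplicial map $\widetilde{\phi} : X \to Y$ (pairwise nonempty intersections of cosets persist under the forward image of $\pi$). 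Because $\widetilde{\phi}(n g K_i) = \widetilde{\phi}(g K_i)$ for every $n \in N$, the map $\widetilde{\phi}$ is $N$-invariant and factors through the quotient projection $p: X \to N \backslash X$, giving the desired $\phi$.

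I would next verify that $\phi$ is a bijection on vertices. For injectivity on vertices of a fixed color $i$: if $(Ng)(NK_i/N) = (Ng')(NK_i/N)$, then $g^{-1}g' \in NK_i$, so $g' = n g k$ for some $n \in N$ and $k \in K_i$, whence $g' K_i = n g K_i$ lies in the $N$-orbit of $g K_i$. Surjectivity on vertices is immediate since every coset $(Ng)(NK_i/N)$ in $V(Y)$ is the image of $[g K_i]$.

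The crux is showing that $\phi$ is bijective on $k$-simplices; the forward direction is immediate from the simpliciality of $\widetilde{\phi}$. For the reverse, I would exploit the transitive left $\Gamma$-action on top simplices of $X$: every top simplex of $X$ has the form $\{h K_0, \ldots, h K_n\}$ for some $h \in \Gamma$, and its $\pi$-image $\{(Nh)(NK_0/N), \ldots, (Nh)(NK_n/N)\}$ is a top simplex of $Y$; moreover, the induced transitive $(N\backslash\Gamma)$-action shows every top simplex of $Y$ arises this way. Given a $k$-simplex $T = \{(Ng_{i_j})(NK_{i_j}/N)\}_{j=0}^k$ of $Y$, one extends $T$ to a top simplex of $Y$ with some witness $h \in \Gamma$; then the face $\{h K_{i_0}, \ldots, h K_{i_k}\}$ of the corresponding top simplex of $X$ projects under $p$ to a preimage of $T$ in $N \backslash X$, witnessing that $\phi^{-1}(T)$ is a simplex.

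The main obstacle in this plan is the ``extend to a top simplex'' step — showing purity of $Y$ and identifying its top simplices as $(N\backslash\Gamma)$-translates of the base simplex $\{NK_0/N, \ldots, NK_n/N\}$. This amounts to the observation that in the coset complexes at hand, a pairwise intersecting family of cosets (one of each color) in fact shares a common element, so that the transitive $\Gamma$-action on chambers gives the desired normal form. Once this is in place, combining with Observation \ref{color presev action is rigid obs} — which ensures that $N \backslash X$ is pure and partite because the $N$-action on $X$ is color-preserving — everything else reduces to bookkeeping with cosets and the normality of $N$.
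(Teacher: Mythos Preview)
Your route through top simplices and chamber-transitivity is more elaborate than the paper's. The paper works directly at the level of $k$-simplices: it computes that the $N$-orbit of a vertex $gK_i$ consists of all $K_i$-cosets contained in $gNK_i=NgK_i$ (so that vertices of $N\backslash X$ are identified with the cosets $(Ng)(NK_i/N)$), and then argues that $\{g_0NK_{i_0},\ldots,g_kNK_{i_k}\}$ is a $k$-simplex in $N\backslash X$ iff the sets $g_jNK_{i_j}$ pairwise intersect---which is exactly the simplex condition in $Y$. No detour through purity, extension to top simplices, or the Helly property appears.

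That said, the obstacle you isolate is genuine, and it is the same one hidden in the paper's one-line ``Note that the last condition is equivalent to \ldots''. Passing from ``for each pair $j,j'$ the cosets $g_jNK_{i_j}$ and $g_{j'}NK_{i_{j'}}$ meet'' to ``there is a \emph{single} tuple $(h_0,\ldots,h_k)\in N^{k+1}$ with all $g_jh_jK_{i_j}$ pairwise meeting'' is not automatic, and your asserted Helly property (a pairwise-intersecting full-color family of cosets has a common point) fails for general coset complexes: take $\Gamma$ free on $a,b$, $N=[\Gamma,\Gamma]$, $K_0=\langle a\rangle$, $K_1=\langle b\rangle$, $K_2=\langle ab\rangle$, and consider the triple $K_0,\,K_1,\,aK_2$. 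In $\Gamma/N\cong\mathbb{Z}^2$ the images pairwise intersect, yet no choice of $h_j\in N$ makes $\{h_0K_0,h_1K_1,h_2aK_2\}$ a $2$-simplex in $X$. So your plan, like the paper's argument, leaves this backward direction unjustified in the stated generality; if you intend the statement only for the specific complexes used downstream, that hypothesis should be made explicit and verified.
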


\begin{proof}
For a vertex $v$ of $X$,  $v = g K_{i}$,  the orbit of $v$ under the action of $N$ is
$$[g K_{i}] = \lbrace h g K_{i} : h \in N \rbrace = \lbrace g (g^{-1} h g) K_{i} : h \in N \rbrace = \lbrace g h K_{i} : h \in N \rbrace = g N K_{i}.$$
We further note that for every $g'$,  if $N g = Ng'$,  then
$$g' N K_{i} = N g' K_{i}  = N g K_{i}  = g N K_{i} = [v].$$
This shows that $[g K_{i}] = (N g) N K_{i}$ and thus the vertices of $N \backslash X$ can be identified with the cosets $g (N \backslash (N K_{i}))$ for $g \in N \backslash \Gamma$.

Let $g_0,...,g_k \in N \backslash \Gamma$ and $0 \leq i_0 < i_1 < ... < i_k \leq n$.  By definition,
$$\lbrace g_0 N K_{i_0},...,g_k N K_{i_k} \rbrace \in (N \backslash X) (k)$$ if and only if there are $h_0,...,h_k \in N$ such that
$$\lbrace g_0 h_0  K_{i_0},...,g_k h_k K_{i_k} \rbrace  \in X (k)$$
and this happens if and only if there are $h_0,...,h_k \in N$ such that for every $0 \leq j < j' \leq k$,  $g_j h_j  K_{i_{j}} \cap g_{j '} h_{j ' } K_{i_{j '}} \neq \emptyset$.  Note that the last condition is equivalent to $g_{j} N K_{i_{j}} \cap g_{j '} N K_{i_{j '}} \neq \emptyset$ for every $0 \leq j < j ' \leq k$.  In other words,  $\lbrace g_0 N K_{i_0},...,g_k N K_{i_k} \rbrace$ spans a $k$-simplex in $N \backslash X$ if and only if $\lbrace g_0 N K_{i_0},...,g_k N K_{i_k} \rbrace$ spans a $k$-simplex in $\mathcal{CC}  (N \backslash \Gamma,  \lbrace N \backslash (N K_{i})  \rbrace_{0 \leq i \leq n})$.  This shows that $N \backslash X$ is isomorphic to the coset complex $ \mathcal{CC}  (N \backslash \Gamma,  \lbrace N \backslash (N K_{i})  \rbrace_{0 \leq i \leq n })$.
\end{proof}

\begin{remark}
Below,  we will usually apply the above Proposition in cases where for every $i$,  $N \cap K_i = \lbrace e \rbrace$.  In these cases,  the quotient map $\Gamma \rightarrow N \backslash \Gamma$ is injective on every $K_i$ and thus we will abuse the notation and denote $K_i$ in lieu of $N \backslash (N K_i)$.
\end{remark}

\subsection{The Kaufman-Oppenheim coset complexes}

\label{The Kaufman-Oppenheim coset complexes subsec}

Let $n \geq 2$ and $R$  a unital commutative ring.  We denote $\SL_{n +1} (R)$ to be the group of $(n+1) \times (n +1)$ matrices with entries in $R$ and determinant $1$.  This group has the subgroup of elementary matrices: For $r \in R$ and indices $1 \leq i, j \leq n +1 ,   i \neq j$,  we define the \textit{elementary matrix} $e_{i,j} (r)$ to be the$(n+1) \times (n +1)$ matrix with $1$'s along the main diagonal,  $r$ in the $(i,j)$-th entry and $0$'s in all other entries.  We denote $\EL_{n +1} (R)$ to be the group of matrices with entries in $R$ that is generated by all elementary matrices.   We note that $\EL_{n+1} (R)$ is always a subgroup of $\SL_{n  +1} (R)$.

Let $p$ be a prime.  We denote $\mathbb{F}_p$ to be the field with $p$ elements and $\mathbb{F}_p [t]$ to be the ring of polynomials over this field.
\begin{theorem} \cite[Lemma 7.2 (1)]{Split}
For every $n \geq 2$,  $\SL_{n +1} (\mathbb{F}_p [t]) = \EL_{n +1} (\mathbb{F}_p [t])$.
\end{theorem}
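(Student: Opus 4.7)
The plan is to exploit the fact that $\mathbb{F}_p[t]$ is a Euclidean domain, with the degree function $\deg$ as Euclidean norm, and to perform Gaussian elimination on an arbitrary $A \in \SL_{n+1}(\mathbb{F}_p[t])$ using only elementary matrices on the left and the right. The argument is by induction on $n$, reducing any such $A$ to the identity.

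\textbf{Step 1 (clear the first column to a unit).} Denote the first column of $A$ by $(a_1, \ldots, a_{n+1})^T$. Cofactor expansion along this column together with $\det A = 1$ shows that $\gcd(a_1, \ldots, a_{n+1})$ divides $1$, hence is a unit of $\mathbb{F}_p[t]$. Apply the Euclidean algorithm to pairs of entries: given nonzero $a_i$ and $a_j$ with $\deg a_j \geq \deg a_i$, write $a_j = q a_i + r$ with $\deg r < \deg a_i$ and left-multiply by $e_{j,i}(-q)$, which replaces $a_j$ by $r$. This strictly decreases the maximum degree among the nonzero entries of the column at each iteration, so after finitely many steps only one nonzero entry remains, and it must be a unit, i.e.\ some nonzero constant $c \in \mathbb{F}_p^\times$. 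A signed-swap product of the form $e_{1,i}(1)\,e_{i,1}(-1)\,e_{1,i}(1)$ then moves this entry into position $(1,1)$.

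\textbf{Step 2 (normalize to $1$ and clear the first row).} Applying the standard $\SL_2$ identity
\begin{equation*}
\begin{pmatrix} c & 0 \\ 0 & c^{-1} \end{pmatrix} = e_{1,2}(c)\,e_{2,1}(-c^{-1})\,e_{1,2}(c)\,e_{1,2}(-1)\,e_{2,1}(1)\,e_{1,2}(-1)
\end{equation*}
inside the top-left $2 \times 2$ block (legitimate since $n + 1 \geq 3$) shows that $\operatorname{diag}(c, c^{-1}, 1, \ldots, 1) \in \EL_{n+1}(\mathbb{F}_p[t])$; left-multiplying by its inverse normalizes the $(1,1)$ entry to $1$. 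Subsequent right-multiplications by $e_{1,j}(-a_{1,j})$ for $j = 2, \ldots, n+1$ clear the remaining entries of the first row, yielding a matrix of the block form $\operatorname{diag}(1, A')$ with $A' \in \SL_n(\mathbb{F}_p[t])$.

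\textbf{Step 3 (induction).} Induct on the matrix size; the base case $\SL_2(\mathbb{F}_p[t])$ is handled by exactly the same Euclidean argument combined with the identity of Step 2. The main subtlety I expect is precisely that unit-normalization identity: without it, the Euclidean algorithm alone would only reduce $A$ up to a leftover diagonal unit factor, and one genuinely needs the extra row to realize a diagonal $(c, c^{-1})$ as a product of elementary matrices. Everything else is routine Euclidean book-keeping, self-contained, and uses no structural property of $p$ beyond $\mathbb{F}_p$ being a field.
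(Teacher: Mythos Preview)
The paper does not supply its own proof of this statement; it is simply quoted from an external reference (\cite[Lemma 7.2 (1)]{Split}), so there is no in-paper argument to compare against. Your proposal is the standard and correct proof that $\SL_m(R)=\EL_m(R)$ over any Euclidean domain $R$: Euclidean reduction on a column via left-multiplications by $e_{j,i}(-q)$, the signed-permutation word $e_{1,i}(1)e_{i,1}(-1)e_{1,i}(1)$ to move the surviving unit into the $(1,1)$ slot, the Steinberg identity $w_{12}(c)w_{12}(-1)=\operatorname{diag}(c,c^{-1})$ to absorb the leftover unit, and then induction. All of the identities you use check out (in particular your six-factor expression for $\operatorname{diag}(c,c^{-1})$ is exactly $w_{12}(c)w_{12}(-1)$). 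One cosmetic remark: the parenthetical ``legitimate since $n+1\geq 3$'' is unnecessary, since the $2\times 2$ diagonal identity already lives inside $\SL_2$ and your base case uses it there; nothing in the argument actually requires a third row.
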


The work \cite{KO-construction,  KO-construction2} introduced an elementary construction of local spectral expanders using coset complexes.  Here, we will repeat a variation of this construction.  Let $n \geq 2$.  Let $\gamma_0 \in \Sym \lbrace 1,...,n+1 \rbrace$ be the cyclic permutation $\gamma_0 = (1 2 ... n +1 )$.  For every $0 \leq i \leq n$,  define $K_i <  \SL_{n +1} (\mathbb{F}_p [t])$ to be the subgroup $K_i = \langle e_{\gamma_0^i (j),  \gamma_0^i (j+1)} (r) : 1 \leq j \leq n ,  r \in \mathbb{F}_p [t], \deg (r) \leq 3 \rangle$.

More explicitly,  $K_0$ is the group composed of upper-triangular matrices of the form
$$\begin{pmatrix}
1 & r_{1,2} & r_{1,3} & \ldots &  r_{1,n +1} \\
0 & 1 & r_{2,3} & \ldots & r_{2,n +1} \\
\vdots & \vdots  & \ddots & \ldots &\vdots  \\
0 &  \ldots & 0 & 1 & r_{n,n +1} \\
0 &  \ldots & 0 & 0 & 1
\end{pmatrix}$$
where $r_{j,k} \in \mathbb{F}_p [t]$ and $\deg (r_{j,k}) \leq 3 (k-j)$.  We note that for every $0 \leq i \leq n$,  $K_0$ is isomorphic to $K_i$.

For every $s \in \mathbb{N}$,  if $s > 3n$,  then $K_i < \SL_{n +1} (\mathbb{F}_p [t] /  \langle t^s \rangle)$.  For every such $s$,  denote $X_{n,p}^{(s)} = \mathcal{CC} (\SL_{n +1} (\mathbb{F}_p [t] / \langle t^s \rangle),  \lbrace K_i \rbrace_{0 \leq i \leq n}$.  When $n,p$ are obvious from the context, we will denote $X^{(s)}$ in lieu of $X_{n,p}^{(s)}$.  As in \cite{KO-construction,  KO-construction2},  the following holds:
\begin{theorem}
Let $n \in \mathbb{N},  n \geq 2$,  $p$ prime such that $p > n^2$.  Then the family $\lbrace X_{n,p}^{(s)}  : s >3n \rbrace$ is a family of pure $n$-dimensional partite,  uniformly bounded degree simplicial complexes that are $\frac{1}{\sqrt{p}-n}$-one-sided local spectral expanders.  Also,  all the links of all vertices of all the complexes in the family $\lbrace X_{n,p}^{(s)}  : s >3n \rbrace$ are isomorphic to each other.
\end{theorem}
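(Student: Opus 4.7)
The plan is to verify each of the claimed properties by adapting the arguments of \cite{KO-construction, KO-construction2} to the present variant, which differs from the original construction only in that the subgroups $K_i$ are conjugates of the upper-triangular subgroup $K_0$ by powers of the cyclic permutation $\gamma_0$.

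Partiteness is immediate from the coloring $gK_i\mapsto i$: by definition every simplex of $X_{n,p}^{(s)}$ involves cosets of distinct $K_i$'s. For uniform bounded degree and $s$-independence of the links, I would first establish that each $K_i$ is a finite group whose isomorphism type does not depend on $s$ once $s>3n$. Indeed, $K_i$ is generated by elementary matrices $e_{\gamma_0^i(j),\gamma_0^i(j+1)}(r)$ with $\deg r\le 3$, and the Steinberg-type commutator relations between such generators produce only polynomial entries of total degree at most $3n<s$; hence no reduction modulo $t^s$ occurs in any word representing an element of $K_i$, and $K_i$ is canonically identified with the same finite group for all $s>3n$. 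Consequently the number of top-dimensional simplices through any given vertex, which is controlled by $|K_i|$ together with the indices $|K_i:K_i\cap K_j|$, is uniformly bounded in $s$.

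For the isomorphism of all vertex links, note that $\Gamma=\SL_{n+1}(\mathbb{F}_p[t]/\langle t^s\rangle)$ acts color-preservingly and transitively on the vertices of each color, so any two same-color vertex links are isomorphic. Conjugation by the permutation matrix of $\gamma_0$ cyclically permutes the $K_i$'s and hence induces an automorphism of $X^{(s)}$ that cyclically permutes the colors, so links of different colors are isomorphic as well. Moreover, by \Cref{N backs X is a coset complex prop}-type reasoning, the link of $K_i$ is isomorphic to $\mathcal{CC}(K_i,\{K_i\cap K_j\}_{j\ne i})$, and by the previous paragraph this coset complex is independent of $s$ up to isomorphism. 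Purity then follows because the $n$-simplex $\{K_0,\ldots,K_n\}$ exists (each $K_i$ contains the identity), so by transitivity every vertex extends to an $n$-simplex; the analogous extension property for $k$-faces with $k\ge 1$ follows inductively by descending into the link of one of the vertices of the face.

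The substantive point is the spectral bound $\lambda\le 1/(\sqrt p-n)$, which I would deduce from the trickling-down theorem of \cite{OppLocI}: by that theorem it suffices to verify the required spectral bound on the $1$-skeleton of every codimension-$2$ link. Each such link is, by the same $s$-independence and cyclic-symmetry arguments, isomorphic to a link already appearing in the original Kaufman--Oppenheim construction, where it is analyzed as a bipartite-like Cayley-type graph on $\SL_3$-type cosets and its second eigenvalue is shown to be at most $1/(\sqrt p - n)$ by a direct elementary estimate. This spectral estimate at the base of the trickling-down induction is the main obstacle; the remaining items reduce to structural bookkeeping once the uniformity of the $K_i$'s in $s$ has been verified.
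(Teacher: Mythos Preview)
The paper does not give a proof of this theorem at all: it is stated as a direct citation of \cite{KO-construction, KO-construction2} (``As in \cite{KO-construction, KO-construction2}, the following holds''), and the remainder of the paper uses it as a black box. So there is no ``paper's own proof'' to compare against.

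Your outline is essentially the argument carried out in those references, and the structural parts (partiteness from the definition of a coset complex, $s$-independence of the $K_i$ for $s>3n$ via degree bookkeeping, transitivity within each color class, and descent to links as coset complexes $\mathcal{CC}(K_i,\{K_i\cap K_j\}_{j\neq i})$) are all correct as stated. Your use of conjugation by the permutation matrix of $\gamma_0$ to identify links of different colors is the right idea; note only that this matrix may have determinant $-1$, so it acts by an \emph{outer} automorphism of $\SL_{n+1}(\mathbb{F}_p[t]/\langle t^s\rangle)$ rather than by left translation, but that is enough to induce a simplicial isomorphism of the coset complex permuting the colors.

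The one place where your sketch is genuinely thin is the spectral step. Trickling down is indeed the mechanism used in \cite{KO-construction}, but the assertion that each codimension-$2$ link ``is isomorphic to a link already appearing in the original Kaufman--Oppenheim construction'' is doing real work and is not obvious: the subgroups $K_i$ here are a variant of the ones in \cite{KO-construction}, and one has to check that the intersections $\bigcap_{j\in\tau}K_j$ for $|\tau|=n-1$ produce the same (or spectrally equivalent) bipartite graphs analyzed there. In \cite{KO-construction} the base case is not a generic ``$\SL_3$-type'' Cayley graph but a specific computation on unipotent/Heisenberg-type groups whose second eigenvalue is bounded by $1/\sqrt{p}$, after which trickling down accumulates the $n$ in the denominator. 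If you intend to write out the proof rather than cite it, that identification and eigenvalue bound is the step that needs to be made explicit.
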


In \cite{KO-cobound},  the following was proven:
\begin{theorem}
For $n \geq 3$ there is $\beta >0$ such that for every odd prime $p$,  every simplicial complex $X^{(s)}=X_{n,p}^{(s)}$ and every vertex $v$,  $h^1_{\cobound} ((X^{(s)})_v,  \mathbb{F}_2) \geq \beta$.
\end{theorem}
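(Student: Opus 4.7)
The plan is to reduce the problem to a single model link and then to invoke the standard decomposition $h^1_{\cobound} = h^1_{\cosys}$ (valid when $H^1$ vanishes). The preceding theorem asserts that all vertex links of all complexes in the family $\{X_{n,p}^{(s)} : s > 3n\}$ are pairwise isomorphic, so it suffices to exhibit a single model link $L = L_{n,p}$ and prove $h^1_{\cobound}(L, \mathbb{F}_2) \geq \beta$ for some $\beta > 0$ independent of $p$. Using \Cref{N backs X is a coset complex prop} applied locally, $L$ is itself isomorphic to the coset complex $\mathcal{CC}(K_0, \{K_0 \cap g K_j g^{-1}\}_{j \neq 0})$ for some fixed coset representative $g$; this is a pure $(n-1)$-dimensional complex whose combinatorics depend only on the local structure of the unipotent-type group $K_0$ and, in particular, are independent of $s$.

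First I would establish vanishing, $H^1(L, \mathbb{F}_2) = 0$. Since $K_0$ acts color-preservingly and transitively on the top faces of $L$, the edge-path group of $L$ can be identified (by the standard Bass--Serre / coset-complex argument of \cite{AbelsH}) with the quotient of $K_0$ by the normal closure of the edge-loop relations coming from $2$-simplices. Using the $A_n$ root-system language of Subsection \ref{A_n subsec}, the generators of $K_0$ correspond to elementary matrices indexed by positive roots; the classical Steinberg-type presentation of $K_0$ consists of commutator relations $[e_\alpha(r), e_\beta(s)]$ between pairs of positive roots $(\alpha,\beta)$. The relations between roots that share a common chamber boundary $\partial C_\gamma$ are directly realized by $2$-simplices of $L$, and a combinatorial propagation through adjacent Weyl chambers recovers all remaining relations. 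This shows $\pi_1(L)$ is trivial, hence $H^1(L, \Lambda) = 0$ for every group $\Lambda$, and in particular for $\Lambda = \mathbb{F}_2$.

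Second I would establish cosystolic expansion $h^1_{\cosys}(L, \mathbb{F}_2) \geq \beta$ by a local-to-global argument. Since $X_{n,p}^{(s)}$ is a $\frac{1}{\sqrt{p}-n}$-one-sided local spectral expander, so is $L$, being a link in $X_{n,p}^{(s)}$; in particular, for $p$ large relative to $n$, $L$ has excellent spectral gap in its links. For $n \geq 3$, links of vertices in $L$ have dimension at least $1$, where $h^1_{\cobound}(\cdot, \mathbb{F}_2)$ coincides with the Cheeger constant of the weighted $1$-skeleton and is bounded below using the trickling-down spectral gap. Feeding this into \Cref{DD for cosys thm} (or the original Evra--Kaufman machine from \cite{EK}) then yields $h^1_{\cosys}(L, \mathbb{F}_2) \geq \beta$ for a constant $\beta > 0$ independent of $p$ (once $p$ is chosen large enough). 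Combining with the vanishing of $H^1$ gives $h^1_{\cobound}(L, \mathbb{F}_2) = h^1_{\cosys}(L, \mathbb{F}_2) \geq \beta$, as required.

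The main obstacle is the vanishing-of-cohomology step: the combinatorial propagation argument on Weyl chambers must verify that every Steinberg commutator relation between pairs of positive roots is expressible as a product of triangle relations supported in $L$. The cosystolic expansion step is then a fairly routine application of existing local-to-global machinery, once the spectral-gap parameters inherited from $X_{n,p}^{(s)}$ are tracked through the link. I would expect the explicit root-theoretic bookkeeping in the first step to occupy most of the technical work, while the remaining pieces assemble cleanly from the background results recalled in Section 2.
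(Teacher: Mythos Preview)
The paper does not prove this statement; it is quoted from \cite{KO-cobound}, with the single remark that it was established there by ``the cone method.''  The cone method is a direct argument: one fixes a base vertex in the link and, for every ordered edge, writes down an explicit filling (a ``cone'') by $2$-simplices; bounding the area of these cones yields a lower bound on $h^1_{\cobound}$ that is uniform in $p$ because the combinatorics of the cone depend only on the root-system bookkeeping, not on the size of $\mathbb{F}_p$.

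Your route is genuinely different, and it contains a real gap in the cosystolic step.  You assert that for a link $L_v$ of a vertex in $L$, ``$h^1_{\cobound}(\cdot,\mathbb{F}_2)$ coincides with the Cheeger constant of the weighted $1$-skeleton.''  This is false: it is $h^0_{\cobound}$ that equals the Cheeger constant (see the Remark immediately following the definition of $h^k_{\cobound}$ in the paper), whereas $h^1_{\cobound}$ measures how cheaply $1$-cochains can be filled using $2$-simplices and is \emph{not} controlled by spectral gap alone.  As a result, your appeal to \Cref{DD for cosys thm} is ungrounded: that theorem takes as \emph{input} a uniform lower bound on $h^1_{\cobound}$ of all vertex links, which is precisely the kind of bound you are trying to output.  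You have set up a recursion with no base case; when $n=3$ the links $L_v$ are $1$-dimensional and $h^1_{\cobound}(L_v)$ is not even defined.

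There is also a quantitative mismatch with the statement.  Even if the recursion could be grounded, the bound coming out of \Cref{DD for cosys thm} is $\tfrac{(1-\lambda)\beta}{24}-e\lambda$ with $\lambda=\tfrac{1}{\sqrt{p}-n}$, which is positive only for $p$ large; the theorem, however, demands a $\beta$ valid for \emph{every} odd prime $p$.  (Your own parenthetical ``once $p$ is chosen large enough'' already signals this.)  The cone method avoids this entirely because it never routes through a spectral-gap inequality.  Your first step---vanishing of $H^1(L,\mathbb{F}_2)$ via root-system propagation---is in the right spirit and is indeed part of what \cite{KO-cobound} does, but vanishing alone gives no quantitative bound; the cone construction is what supplies the constant.
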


Note that in the above Theorem the constant $\beta$ is independent of $p$. Thus by Theorem \ref{DD for cosys thm} and the spectral results of \cite{KO-construction} mentioned above,  it follows that for every sufficiently large $p$,   $\lbrace X_{n,p}^{(s)} : s > 3n \rbrace$ is a family of $\mathbb{F}_2$ 1-cosystolic expanders of uniformly bounded degree.  The Theorem above was proved by a method known as ``the cone method''.  This method was generalized to non-Abelian coefficients in \cite{DD-swap} and using their definition,  one can adapt the proof of \cite{KO-cobound} to the following stronger result:
\begin{theorem}
For $n \geq 3$ there is $\beta >0$ such that for every odd prime $p$,  every group $\Lambda$,  every simplicial complex $X^{(s)}=X_{n,p}^{(s)}$ and every vertex $v$,  $h^1_{\cobound} ((X^{(s)})_v,  \Lambda) \geq \beta$.
\end{theorem}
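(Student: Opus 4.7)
The strategy is to observe that the cone-type argument used in \cite{KO-cobound} to prove the $\mathbb{F}_2$-version of this theorem is essentially non-Abelian in nature: it constructs an explicit combinatorial filling of loops in the link, and produces a bound on $h^1_{\cobound}$ that depends only on the length and multiplicity parameters of this filling, not on the coefficient group. My plan is first to recall the non-Abelian cone method from \cite{DD-swap}: to bound $h^1_{\cobound}(Y, \Lambda)$ from below by some $\beta > 0$ independent of $\Lambda$, it suffices to exhibit a base vertex $v_0 \in Y$, a system of ordered paths $\gamma_v$ from $v_0$ to each vertex $v$, and for each edge $\lbrace u,v \rbrace$ an ordered triangulated disc filling the loop $\gamma_u \cdot (u,v) \cdot \gamma_v^{-1}$, with uniform control on the total $2$-face weight appearing in each disc and on the maximum multiplicity with which any single $2$-face is used across all the discs. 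Since all links of all $X_{n,p}^{(s)}$ are isomorphic to a single model link $Y_{n,p}$ (depending only on the common structure of the coset complex), it is enough to produce such a system on $Y_{n,p}$ with parameters that depend only on $n$.

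Second, I would take the filling constructed in \cite{KO-cobound} and unpack its ordered structure. That filling is built from the Steinberg-type commutator relations $[e_{i,j}(r), e_{j,k}(r')] = e_{i,k}(rr')$ between root subgroups of type $A_n$, each of which lives inside a single $2$-simplex of the link. The paths $\gamma_v$ come from a canonical normal-form decomposition of group elements into products of elementary matrices corresponding to positive roots; to contract a loop one rewrites its letters into this normal form using a bounded sequence of commutator relations. The length of this sequence depends only on $n$, since the number of positive roots in $A_n$ is $\binom{n+1}{2}$, which is independent of $p$ and $s$. The required uniform weight bounds come from the fact that the action of the stabilizer on the link is transitive on $2$-faces of each color pattern, so the multiplicity parameter is automatically controlled.

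The main obstacle, and the only place where the $\mathbb{F}_2$ argument requires genuine extension, is establishing the orientation coherence demanded by \cite{DD-swap}: over $\mathbb{F}_2$ (and more generally any Abelian group) the contributions of the inserted $2$-faces may be added in any order, but over a non-Abelian $\Lambda$ the ordered product of $\phi$-values along the filling must be a well-defined word that literally equals the loop word in the free groupoid of paths in $Y_{n,p}$, not merely in its abelianization. I would use the linear order on roots of $A_n$ introduced in \Cref{A_n subsec} to pin down a canonical ordering of the commutator relations appearing in each disc, and verify inductively on the rewriting length that the ordered concatenation of the $2$-cell boundaries is exactly the loop word. Once this coherence is in place, the combinatorial parameters extracted in \cite{KO-cobound} are identical to those in the non-Abelian case, and the lower bound $\beta$ they yield is independent of $p$, of $s$, and of $\Lambda$, which completes the proof.
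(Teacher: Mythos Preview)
Your proposal is correct and follows the same approach the paper indicates: the paper does not give a self-contained proof of this statement but simply observes that the cone-method argument of \cite{KO-cobound}, once rephrased using the non-Abelian cone framework of \cite{DD-swap}, yields a bound $\beta$ depending only on $n$ and not on $p$, $s$, or $\Lambda$. Your write-up spells out in more detail than the paper does precisely what this adaptation entails (ordered fillings, orientation coherence via a root ordering, and the fact that all vertex links are isomorphic), but the underlying strategy is identical.
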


Combining this Theorem with Theorem \ref{DD for cosys thm} and the spectral results of \cite{KO-construction} mentioned above yields:
\begin{theorem}
\label{cosys exp of KO complexes thm}
For every $n \geq 3$,  there exists $p_0$ such that for every prime $p \geq p_0$ and every group $\Lambda$,  the family $\lbrace X_{n,p}^{(s)} : s > 3n \rbrace$ has uniformly bounded degree and $1$-cosystolic expansion over $\Lambda$.
\end{theorem}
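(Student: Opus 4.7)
The plan is an essentially immediate assembly of the three ingredients introduced just above: the uniform link-coboundary bound of the preceding theorem, the one-sided local spectral expansion from \cite{KO-construction}, and Theorem \ref{DD for cosys thm}. Fix $n \geq 3$ and let $\beta = \beta(n) > 0$ be the link-coboundary constant from the preceding theorem, so that $h^1_{\cobound}((X^{(s)}_{n,p})_v, \Lambda) \geq \beta$ uniformly in the odd prime $p$, in $s > 3n$, in the vertex $v$, and in the group $\Lambda$. Write $\lambda_p := \frac{1}{\sqrt{p}-n}$; since $\lambda_p \to 0$ as $p \to \infty$ while $\beta$ stays fixed, I can choose $p_0$ so that $p_0 > n^2$ and $\frac{(1-\lambda_p)\beta}{24} - e\lambda_p > 0$ for every $p \geq p_0$.

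For any such $p$, applying Theorem \ref{DD for cosys thm} to $X^{(s)}_{n,p}$ with link-coboundary parameter $\beta$ and spectral parameter $\lambda_p$ yields a lower bound on $h^1_{\cosys}(X^{(s)}_{n,p}, \Lambda)$ that is independent of $s$ and $\Lambda$. The bound on $h^0_{\cosys}$ follows from the remark identifying $h^0_{\cobound}$ with the weighted Cheeger constant of the $1$-skeleton, combined with the spectral-gap bound on that $1$-skeleton inherited from local spectral expansion (via the trickling-down theorem); note that $h^0_{\cosys} \geq h^0_{\cobound}$ automatically. Uniform bounded degree is already part of the preceding spectral theorem, since the link structure does not depend on $s$.

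I anticipate no real obstacle; the conceptual work has all been done in the two preceding theorems (the cone-method link-coboundary bound of \cite{KO-cobound}, made non-abelian via \cite{DD-swap}, and the local-to-global passage in \cite{DD-cosys}). The one point requiring mild care is that the paper's definition of a $(\Lambda,\beta)$ 1-cosystolic expander also demands a systolic-type bound $\Vert \phi \Vert \geq \beta$ for every $\phi \in Z^1 \setminus B^1$; I expect this to be packaged inside the Dikstein--Dinur conclusion (it is a standard output of cone-type arguments), and if not it follows by the usual local-to-global argument, since a small-support non-coboundary cocycle restricts to a small-support cocycle in some vertex link, where link-coboundary expansion forces it to be a coboundary, and the partite/connected structure allows the local primitives to be patched into a global one, contradicting $\phi \notin B^1$.
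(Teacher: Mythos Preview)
Your proposal is correct and follows exactly the same route as the paper: the paper does not even write out a proof, merely stating that the theorem follows by ``combining this Theorem with Theorem~\ref{DD for cosys thm} and the spectral results of \cite{KO-construction}.'' You have simply made that combination explicit, and your additional care about the $h^0_{\cosys}$ bound and the systolic condition $\Vert \phi \Vert \geq \beta$ (both of which are indeed part of the paper's definition of a $(\Lambda,\beta)$ $1$-cosystolic expander, and both of which are standard outputs of the local-to-global machinery in \cite{DD-cosys,KKL,EK}) is appropriate and accurate.
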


\subsection{Presentation of $\SL_n  (\mathbb{F}_p [t])$}

Let $R$ be a unital ring and $n \geq 2$ an integer.  The group $\St_{n+1} (R)$ is defined as follows via generators and relations: The generating set is
$$\lbrace x_{i,j} (r) : r \in R,  1 \leq i,j \leq n+1,  i \neq j \rbrace$$
with the following relations:
\begin{enumerate}
\item For every $ \leq i,j \leq n+1,  i \neq j$,  $x_{i,j} (0) = e$.
\item For every $r_1, r_2 \in R$ and every $1 \leq i,j \leq n+1,  i \neq j$,  $x_{i,j} (r_1) x_{i,j} (r_2) = x_{i,j} (r_1 + r_2)$.
\item For every $r_1, r_2 \in R$ and every $1 \leq i,j,i',j' \leq n+1,  i \neq j, i' \neq j',  j \neq i',  j' \neq i$,  $[x_{i,j} (r_1),  x_{i',j'} (r_2)] =e$.
 \item For every $r_1, r_2 \in R$ and every $1 \leq i,j,k \leq n+1$ that are pair-wise distinct, $[x_{i,j} (r_1),  x_{j,k} (r_2)] =x_{i,k} (r_1 r_2)$.
\end{enumerate}
We note that the map $\St_{n+1} (R) \rightarrow \EL_{n+1} (R)$ induced by $x_{i,j} (r) \mapsto e_{i,j}$ is a surjective homomorphism.

\begin{observation}
\label{kernel of St (R/I) obs}
For an ideal $I$ of $R$,  it holds that the congruence subgroup $\Ker (\St_{n+1} (R) \rightarrow \St_{n+1} (R / I))$ is the subgroup of $\St_{n+1} (R)$ normally generated by all the elements of the form $x_{i,j} (s)$ where $1 \leq i, j \leq n+1,  i\neq j$ and $s \in I$.
\end{observation}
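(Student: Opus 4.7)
The plan is to prove the two inclusions between $K := \langle\langle x_{i,j}(s) : 1 \leq i \neq j \leq n+1,\ s \in I \rangle\rangle$ (the normal closure) and the kernel of the reduction map $\pi : \St_{n+1}(R) \to \St_{n+1}(R/I)$. One inclusion is immediate: each normal generator $x_{i,j}(s)$ with $s \in I$ is sent by $\pi$ to $x_{i,j}(s+I) = x_{i,j}(0)$, which equals $e$ by relation (1). Since $\ker(\pi)$ is normal, it contains the normal closure $K$.

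The substantive inclusion is $\ker(\pi) \subseteq K$. The strategy is to use the universal property of $\St_{n+1}(R/I)$: I will construct a homomorphism $\varphi : \St_{n+1}(R/I) \to \St_{n+1}(R)/K$ which is a left inverse (equivalently a two-sided inverse, as the natural map is surjective) of the map $\bar\pi : \St_{n+1}(R)/K \to \St_{n+1}(R/I)$ induced by $\pi$. Once both maps are inverses to each other, the identity $\ker(\pi)/K = \ker(\bar\pi) = \{e\}$ forces $\ker(\pi) = K$.

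To build $\varphi$, for each generator $x_{i,j}(\bar r)$ of $\St_{n+1}(R/I)$ choose any lift $r \in R$ with $r + I = \bar r$, and define $\varphi(x_{i,j}(\bar r)) := x_{i,j}(r) K$. Independence of the lift is guaranteed by relation (2): if $r' \in R$ is another lift then $r - r' \in I$, so $x_{i,j}(r - r') \in K$, and hence $x_{i,j}(r) = x_{i,j}(r') x_{i,j}(r - r') \equiv x_{i,j}(r') \pmod K$. One then verifies that the four defining relations of $\St_{n+1}(R/I)$ are preserved in $\St_{n+1}(R)/K$: relations (1)-(3) translate into the identical relations on the chosen lifts, and for relation (4) the product $r_1 r_2$ in $R/I$ is represented by the product of lifts in $R$, so $\varphi([x_{i,j}(\bar r_1), x_{j,k}(\bar r_2)]) = [x_{i,j}(r_1), x_{j,k}(r_2)] K = x_{i,k}(r_1 r_2) K = \varphi(x_{i,k}(\overline{r_1 r_2}))$. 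Thus $\varphi$ is well-defined on the presentation.

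Finally, $\varphi \circ \bar\pi$ sends the class of every generator $x_{i,j}(r)$ of $\St_{n+1}(R)$ to itself modulo $K$, hence is the identity on $\St_{n+1}(R)/K$. This shows $\bar\pi$ is injective, giving $\ker(\pi) = K$. The only delicate point in this plan is the well-definedness of $\varphi$ on the full set of relations; this is not a real obstacle but is the only spot where a careless lift could create a mismatch, and it is precisely handled by the additivity relation (2) which makes $x_{i,j}$ behave as a homomorphism in the $R$-coordinate.
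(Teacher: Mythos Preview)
Your argument is correct: the easy inclusion $K \subseteq \ker(\pi)$ is immediate from relation (1), and for the reverse inclusion your construction of a splitting $\varphi : \St_{n+1}(R/I) \to \St_{n+1}(R)/K$ via the presentation is the standard and sound approach; the verification that each of the four Steinberg relations survives under your choice of lifts goes through exactly as you indicate, using only relation (2) to resolve the ambiguity of lifts.

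The paper itself offers no proof of this statement---it is recorded as an Observation and left to the reader---so there is no alternative argument to compare with. Your write-up is essentially the canonical proof that, for any group given by generators and relations, the kernel of the map induced by a quotient of the indexing set of generators is the normal closure of the obvious elements; nothing more elaborate is needed here.
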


\begin{theorem}\cite[Lemma 7.2 (2)]{Split}
\label{Stn = SLn in Fp t case thm}
For every $n \geq 3$,  $\SL_{n+1} (\mathbb{F}_p [t]) \cong \St_{n+1} (\mathbb{F}_p [t])$.
\end{theorem}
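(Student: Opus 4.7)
The plan is to work with the homomorphism $\phi: \St_{n+1}(\mathbb{F}_p[t]) \to \SL_{n+1}(\mathbb{F}_p[t])$ determined by $x_{i,j}(r) \mapsto e_{i,j}(r)$. The Steinberg relations (2)--(4) in the presentation of $\St_{n+1}(R)$ are readily checked to hold among the elementary matrices in $\EL_{n+1}(R)$, so $\phi$ is well defined, and by the previous cited result (Lemma 7.2(1) of \cite{Split}) its image is exactly $\EL_{n+1}(\mathbb{F}_p[t]) = \SL_{n+1}(\mathbb{F}_p[t])$. Hence the entire content of the theorem is that $\ker \phi = \{e\}$. By definition this kernel is the unstable $K$-theory group $K_2(n+1, \mathbb{F}_p[t])$, and the problem reduces to showing that this group is trivial for $n \geq 3$.

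My plan for the vanishing proceeds in three classical reductions. First, invoke Suslin's stability theorem for unstable $K_2$: whenever the rank is at least $4$ (i.e.\ $n \geq 3$) and the base ring has sufficient stable range--which $\mathbb{F}_p[t]$ does, being a Euclidean domain--the natural map $K_2(n+1, \mathbb{F}_p[t]) \to K_2(\mathbb{F}_p[t])$ to stable $K_2$ is an isomorphism. Second, use Quillen's fundamental homotopy-invariance result for algebraic $K$-theory of regular Noetherian rings to obtain $K_2(\mathbb{F}_p[t]) \cong K_2(\mathbb{F}_p)$. Third, apply Matsumoto's theorem: $K_2$ of any field is the group of Steinberg symbols, and for a finite field this group is trivial since $\mathbb{F}_p^\times$ is cyclic and finite, so bilinearity together with the Steinberg relation $\{a, 1-a\} = 0$ collapse all symbols. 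Chaining these three steps yields $K_2(n+1, \mathbb{F}_p[t]) = 0$, completing the proof.

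The main obstacle is Suslin's stability theorem, whose proof is a delicate sequence of commutator identities inside $\St_{n+1}$ used to conjugate any stably trivial relation down to rank $n+1$; the rank bound $\geq 4$ is exactly what makes enough ``room'' for these commutator manipulations, and this is why the hypothesis $n \geq 3$ appears.

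A more self-contained alternative, which avoids invoking these $K$-theoretic black boxes and is likely closer to what \cite{Split} actually does, is a direct Bruhat-style normal-form argument exploiting that $\mathbb{F}_p[t]$ is Euclidean. The idea is to show, using only the Steinberg relations (1)--(4), that every element of $\St_{n+1}(\mathbb{F}_p[t])$ can be rewritten uniquely in a canonical form $u_- \cdot w \cdot u_+$, where $u_\pm$ are products of $x_{i,j}(r)$'s along the positive/negative roots with canonically chosen polynomial coefficients and $w$ is a lift of a Weyl group element (built from the standard generators $w_{i,j}(1) = x_{i,j}(1)x_{j,i}(-1)x_{i,j}(1)$). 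One then checks that $\phi$ sends these normal forms injectively into $\SL_{n+1}(\mathbb{F}_p[t])$ by matching them with the usual Bruhat decomposition over the Euclidean ring. The hard step in this approach is the reduction algorithm that turns an arbitrary word in $\St_{n+1}$ into this normal form using only the given relations; carrying the Euclidean division of polynomials through the non-trivial commutator relation (4) is exactly where the constraint $n \geq 3$ (equivalently, at least three distinct indices $i,j,k$ to set up the key identity $[x_{i,j}(r_1), x_{j,k}(r_2)] = x_{i,k}(r_1 r_2)$) becomes essential.
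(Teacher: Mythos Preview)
The paper does not give its own proof of this statement: it is quoted verbatim as \cite[Lemma~7.2(2)]{Split} and used as a black box. There is therefore no ``paper's proof'' to compare your proposal against.

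That said, your $K$-theoretic route (Suslin stability $\Rightarrow$ Quillen homotopy invariance $\Rightarrow$ Matsumoto for finite fields) is correct and is essentially the standard argument; a concrete reference combining these steps is Tulenbayev's stability theorem together with the computation $K_2(\mathbb{F}_p[t])\cong K_2(\mathbb{F}_p)=0$. One small inaccuracy: in your final paragraph you attribute the hypothesis $n\geq 3$ to the need for ``at least three distinct indices $i,j,k$'' in the commutator relation $[x_{i,j}(r_1),x_{j,k}(r_2)]=x_{i,k}(r_1 r_2)$. That relation already makes sense for $n\geq 2$ (matrices of size $\geq 3$). The genuine reason $n\geq 3$ enters is the stability step: for a Euclidean domain like $\mathbb{F}_p[t]$ one has stable range $2$, and Suslin--Tulenbayev give $K_2(m,R)\xrightarrow{\sim}K_2(R)$ only once $m\geq \operatorname{sr}(R)+2=4$, i.e.\ $n+1\geq 4$. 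Your Bruhat normal-form sketch is plausible but would require substantially more work to make rigorous; the $K$-theory argument is the cleaner path.
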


\begin{theorem}\cite[Main Theorem]{Split}
For $\mu = (\mu_0,  \mu_1) \in \mathbb{Z} / 2 \mathbb{Z} \times (\mathbb{N} \cup \lbrace 0 \rbrace)$ denote $t^\mu = (-1)^{\mu_0} t^{\mu_1}$.  Further denote $\vert \mu \vert = \mu_1$.   For every $n \geq 3$,   the following is a presentation of $\St_{n+1} (\mathbb{Z} [t])$:
Generators
$$\lbrace x_{i,j} (t^\mu) : 1 \leq i,j \leq n+1 , i \neq j,  \mu \in \mathbb{Z} / 2 \mathbb{Z} \times \lbrace 0,...,3 \rbrace \rbrace$$
and relations:
\begin{enumerate}
%\item For every $1 \leq i,j \leq n, i \neq j$,  $x_{i,j} (1)^p =e$.
\item For every $1 \leq i,j \leq n+1$ , $i \neq j$ and every $\mu_1 \in \lbrace 0,...,3 \rbrace$, $x_{i,j} (t^{\mu_1}) x_{i,j} (- t^{\mu_1}) = e$.
\item For every $1 \leq i,j,  i', j' \leq n+1$ , $i \neq j$,  $i ' \neq j '$,  $j \neq i'$,  $i' \neq j$ and every $\mu,  \mu '  \in \mathbb{Z} / 2 \mathbb{Z} \times \lbrace 0,...,3 \rbrace$,
$[x_{i,j} (t^\mu),  x_{i',j'} (t^{\mu'})] =e$.
\item For every $1 \leq i,j,k \leq n+1$ pairwise distinct and every $\mu,  \mu '  \in \mathbb{Z} / 2 \mathbb{Z} \times \lbrace 0,...,3 \rbrace$ with $\vert \mu + \mu ' \vert \leq 3$,  $[x_{i,j} (t^\mu),  x_{j,k} (t^{\mu'})] =x_{i,k} (t^{\mu + \mu '})$.
\end{enumerate}
\end{theorem}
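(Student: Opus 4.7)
The plan is to compare the finitely-presented group $G$ from the statement with the standard Steinberg group $\St := \St_{n+1}(\mathbb{Z}[t])$, whose presentation uses generators $x_{i,j}(f)$ for \emph{every} $f \in \mathbb{Z}[t]$ together with the four Steinberg relations without any degree restriction. Since every relation listed in the theorem is a specialization of a Steinberg relation (imposed only when $|\mu|, |\mu'| \leq 3$ and, in relation 3, $|\mu+\mu'| \leq 3$), there is a natural homomorphism $\pi : G \to \St$ sending each listed generator of $G$ to the corresponding Steinberg generator. The task reduces to constructing an inverse, or equivalently to showing that all the ``missing'' Steinberg relations are consequences of those appearing in the finite presentation.

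To invert $\pi$, I would introduce \emph{virtual} elements $X_{i,j}(f) \in G$ for every $f \in \mathbb{Z}[t]$ and every pair of distinct indices. For $f = t^\mu$ with $|\mu| \leq 3$, set $X_{i,j}(t^\mu) := x_{i,j}(t^\mu)$. For a monomial $t^\mu$ with $|\mu| > 3$, pick an auxiliary index $k \notin \{i,j\}$ (possible since $n + 1 \geq 4$) and a splitting $\mu = \mu' + \mu''$ with $|\mu'|, |\mu''| \leq 3$, and set
\[
X_{i,j}(t^\mu) := \bigl[\, x_{i,k}(t^{\mu'}),\ x_{k,j}(t^{\mu''}) \,\bigr].
\]
Extend to arbitrary polynomials $f = \sum c_d t^d$ multiplicatively. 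If one then verifies that $X_{i,j}$ is independent of all choices and satisfies the four Steinberg relations in $G$, the universal property of $\St$ yields a homomorphism $\St \to G$ extending $x_{i,j}(f) \mapsto X_{i,j}(f)$ which inverts $\pi$.

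The core verifications are: (a) independence of the intermediate index $k$ and of the splitting of $\mu$; (b) additivity $X_{i,j}(f) X_{i,j}(g) = X_{i,j}(f+g)$ for arbitrary polynomials; (c) commutativity for disjoint roots; and (d) the full commutator relation $[X_{i,j}(f), X_{j,k}(g)] = X_{i,k}(fg)$ without any degree restriction. All of these reduce, via Hall--Witt-type commutator identities, to the three families of relations given in the statement, provided enough indices are available for repeated rewriting. Crucially, (a) is established by using a fourth index $l$ as a ``pivot'' between two choices $k, k'$: one compares $[x_{i,k}(a), x_{k,j}(b)]$ and $[x_{i,k'}(a), x_{k',j}(b)]$ by expanding each as a nested commutator through $x_{l,\ast}$ generators and collapsing using relations 2 and 3. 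Additivity (b) is then derived for $X_{i,j}$ by rewriting $X_{i,j}(f+g)$ as a commutator of $X_{i,k}(f+g)$ with $X_{k,j}(1)$ and expanding with the identity $[uv,w] = u[v,w]u^{-1}[u,w]$.

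The main obstacle is precisely step (a), the well-definedness of the virtual generators, since everything downstream depends on it. This is where the hypothesis $n \geq 3$ is essential: the pivoting argument needs a fourth index to exist. A related subtlety is that splitting $t^\mu$ as a product $t^{\mu'}\cdot t^{\mu''}$ with $|\mu'|, |\mu''| \leq 3$ only reaches $|\mu| \leq 6$ directly; for larger $|\mu|$ one must iterate the commutator definition, and consistency between the various nested expressions has to be established by induction on $|\mu|$, each inductive step re-invoking the pivoting argument. The $\mathbb{Z}/2\mathbb{Z}$ component of $\mu$ (the sign) is propagated through these computations using relation 1, which gives $x_{i,j}(-t^{\mu_1}) = x_{i,j}(t^{\mu_1})^{-1}$ and allows signs to be moved freely past commutator brackets.
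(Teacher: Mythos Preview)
The paper does not prove this theorem at all: it is quoted verbatim as \cite[Main Theorem]{Split} and used as a black box. So there is no ``paper's own proof'' to compare against --- the authors simply import the result from the cited reference and immediately deduce Corollary~\ref{presentation of SL_n+1 F_p [t] coro} from it.

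Your outline is a reasonable sketch of how such a presentation result is typically established (define virtual generators by iterated commutators, check well-definedness via a pivot index, verify the Steinberg relations hold for the virtual generators, and conclude by the universal property of $\St$). You correctly isolate the crux, namely independence of the auxiliary index and of the splitting, and you correctly note that this is where $n\ge 3$ (i.e.\ at least four indices) enters. That said, what you wrote is a plan rather than a proof: the Hall--Witt manipulations in steps (a)--(d) are asserted but not carried out, and the inductive extension beyond $|\mu|\le 6$ is only gestured at. If you intend to actually supply a proof, those computations are where all the work lies; if you merely need the statement, cite \cite{Split} as the paper does.
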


\begin{corollary}
\label{presentation of SL_n+1 F_p [t] coro}
For every $n \geq 3$,   the following is a presentation of $\SL_{n+1} (\mathbb{F}_p [t])$:
Generators
$$\lbrace x_{i,j} (t^\mu) : 1 \leq i,j \leq n+1, i \neq j,  \mu \in \mathbb{Z} / 2 \mathbb{Z} \times \lbrace 0,...,3 \rbrace \rbrace$$
and relations:
\begin{enumerate}
\item For every $1 \leq i,j \leq n+1, i \neq j$,  $x_{i,j} (1)^p =e$.
\item For every $1 \leq i,j \leq n+1, i \neq j$ and every $\mu_1 \in \lbrace 0,...,3 \rbrace$, $x_{i,j} (t^{\mu_1}) x_{i,j} (- t^{\mu_1}) = e$.
\item For every $1 \leq i,j,  i', j' \leq n+1, i \neq j,  i ' \neq j ',  j \neq i',  i' \neq j$ and every $\mu,  \mu '  \in \mathbb{Z} / 2 \mathbb{Z} \times \lbrace 0,...,3 \rbrace$,
$[x_{i,j} (t^\mu),  x_{i',j'} (t^{\mu'})] =e$.
\item For every $1 \leq i,j,k \leq n+1$ pairwise distinct and every $\mu,  \mu '  \in \mathbb{Z} / 2 \mathbb{Z} \times \lbrace 0,...,3 \rbrace$ with $\vert \mu + \mu ' \vert \leq 3$,  $[x_{i,j} (t^\mu),  x_{j,k} (t^{\mu'})] =x_{i,k} (t^{\mu + \mu '})$.
\end{enumerate}
\end{corollary}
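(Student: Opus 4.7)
The plan is to derive the Corollary from the presentation of $\St_{n+1}(\mathbb{Z}[t])$ given in the Main Theorem of \cite{Split} together with the isomorphism $\SL_{n+1}(\mathbb{F}_p[t]) \cong \St_{n+1}(\mathbb{F}_p[t])$ supplied by Theorem \ref{Stn = SLn in Fp t case thm}. The ring surjection $\mathbb{Z}[t] \twoheadrightarrow \mathbb{F}_p[t]$ with kernel $(p)$ induces a group surjection $\pi : \St_{n+1}(\mathbb{Z}[t]) \twoheadrightarrow \St_{n+1}(\mathbb{F}_p[t])$, and by Observation \ref{kernel of St (R/I) obs}, $\ker(\pi)$ is the normal closure in $\St_{n+1}(\mathbb{Z}[t])$ of the set $\{x_{i,j}(s) : i \neq j,\ s \in p\mathbb{Z}[t]\}$. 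Let $G$ denote the abstract group given by the presentation in the Corollary. Relations (1)--(4) all hold in $\St_{n+1}(\mathbb{F}_p[t])$, so there is a natural surjection $G \twoheadrightarrow \St_{n+1}(\mathbb{F}_p[t])$, and it suffices to show this map is injective; equivalently, that the normal closure $N$ of $\{x_{i,j}(1)^p : i \neq j\}$ inside $\St_{n+1}(\mathbb{Z}[t])$ coincides with $\ker(\pi)$.

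One containment $N \subseteq \ker(\pi)$ is immediate, since $x_{i,j}(1)^p$ maps to $x_{i,j}(p) = x_{i,j}(0) = e$ in $\St_{n+1}(\mathbb{F}_p[t])$. For the reverse inclusion, I would reduce the problem to showing that $x_{i,j}(t^k)^p \in N$ for every $i \neq j$ and every integer $k \geq 0$. Indeed, any $s \in p\mathbb{Z}[t]$ may be written as $s = \sum_k p\, a_k\, t^k$ with $a_k \in \mathbb{Z}$, and by the additivity relation that holds in $\St_{n+1}(\mathbb{Z}[t])$ (and hence in any quotient thereof), one has $x_{i,j}(s) = \prod_k x_{i,j}(t^k)^{p a_k} = \prod_k \bigl(x_{i,j}(t^k)^p\bigr)^{a_k}$, which lies in $N$ as soon as each factor does.

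The case $k = 0$ is the defining relation of $N$. For $k \geq 1$, fix an index $l \notin \{i, j\}$ (available since $n \geq 3$) and use the Steinberg commutator identity $x_{i,j}(t^k) = [x_{i,l}(1),\, x_{l,j}(t^k)]$. Setting $y = x_{i,l}(1)$ and $z = x_{l,j}(t^k)$, the commutativity relation implies that $x_{i,j}(t^k)$ commutes with $y$ (they share the first index $i$) and with $z$ (they share the second index $j$), so an easy induction on the exponent yields
\[
y^p z y^{-p} \;=\; x_{i,j}(t^k)^p \cdot z,
\]
and therefore $x_{i,j}(t^k)^p = y^p \cdot (z y^{-p} z^{-1})$, which is the product of $y^p \in N$ with a conjugate of $y^{-p}$, hence lies in $N$.

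The main place to be careful is the last commutator computation: one has to verify the index conditions of relation (3) of the Main Theorem to conclude that $x_{i,j}(t^k)$ commutes with both $y$ and $z$, and that the telescoping $y^p z y^{-p} = x_{i,j}(t^k)^p z$ really goes through. Everything else in the argument is a formal bookkeeping reduction, so once this step is in hand the Corollary follows.
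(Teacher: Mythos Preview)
Your proposal is correct and follows essentially the same approach as the paper: both arguments reduce to showing that $x_{i,j}(t^m)^p$ lies in the normal closure of $\{x_{i,l}(1)^p\}$ inside $\St_{n+1}(\mathbb{Z}[t])$, and both exploit the commutator expression $x_{i,j}(t^m)=[x_{i,l}(1),x_{l,j}(t^m)]$ together with the fact that this commutator centralizes $x_{i,l}(1)$. The only cosmetic difference is that the paper packages the identity $[y,z]^p=[y^p,z]$ (valid when $[y,[y,z]]=e$) as Lemma~\ref{[x,y]^p lemma}, whereas you carry out the equivalent telescoping $y^p z y^{-p}=[y,z]^p z$ directly; note also that the commutation you invoke is relation~(2) of the Main Theorem, not~(3).
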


In order to prove this corollary,  we will need the following Lemma which will also be useful in the sequel:
\begin{lemma}\cite[Lemma 2]{BissD}
\label{[x,y]^p lemma}
Let $G$ be a group and $x,y \in G$ be elements such that $[x,  [x,y]] =e$.  Then for any integer $p \in \mathbb{N}$,
$[x,y]^p = [x^p, y]$.
\end{lemma}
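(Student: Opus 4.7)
The plan is to give a short induction on $p$, exploiting the hypothesis $[x,[x,y]]=e$ by writing $c = [x,y]$ and repeatedly pushing $x$ past $y$ at the cost of a central factor of $c$.

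First I would rewrite the defining identity $c = xyx^{-1}y^{-1}$ as the "straightening rule" $xy = cyx$. Since $[x,c]=e$, the element $c$ commutes with every power of $x$, so this rule can be iterated. By induction on $p$, I claim that $x^p y = c^p y x^p$. The base case $p=1$ is exactly the straightening rule. For the inductive step, assuming $x^{p-1}y = c^{p-1} y x^{p-1}$, compute
\[
x^p y \;=\; x \cdot x^{p-1} y \;=\; x \cdot c^{p-1} y x^{p-1} \;=\; c^{p-1} \cdot xy \cdot x^{p-1} \;=\; c^{p-1} \cdot c y x \cdot x^{p-1} \;=\; c^p y x^p,
\]
where the third equality uses $[x,c]=e$ (so $x$ commutes with $c^{p-1}$) and the fourth uses the straightening rule.

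Once the formula $x^p y = c^p y x^p$ is in hand, I would simply rearrange it as $x^p y x^{-p} y^{-1} = c^p$, which is precisely $[x^p,y] = [x,y]^p$, completing the proof. The base case $p=0$ is trivial ($[x^0,y]=e=c^0$), and the statement for $p\in\mathbb{N}$ follows.

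There is no serious obstacle here; the only thing one has to be careful about is the order of the factors when pushing $x$ through $y$ (the non-Abelianness of $G$ is real, and the whole point of the hypothesis is that the single "error term" $c$ is central in the subgroup $\langle x, c\rangle$, which is exactly what makes the telescoping work). The argument uses nothing beyond the hypothesis $[x,[x,y]]=e$, so no additional machinery from the rest of the paper is needed.
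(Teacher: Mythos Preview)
Your proof is correct. The paper does not actually prove this lemma; it is quoted from an external reference (\cite[Lemma 2]{BissD}) and used as a black box in the proof of Corollary~\ref{presentation of SL_n+1 F_p [t] coro} that follows. Your argument---writing $c=[x,y]$, using the hypothesis to see that $c$ commutes with $x$, and then inducting on $p$ via the straightening rule $xy=cyx$ to obtain $x^p y = c^p y x^p$---is the standard elementary derivation and is exactly what one expects the cited proof to contain.
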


\begin{proof}
By Theorem \ref{Stn = SLn in Fp t case thm},  it holds that $ \SL_{n+1} (\mathbb{F}_p [t]) \cong \St_{n+1}  (\mathbb{F}_p [t])$ and thus we need to show that the presentation above is a presentation of $\St_{n+1}  (\mathbb{F}_p [t])$.

Consider the exact sequence
$$0 \rightarrow N \rightarrow \St_{n+1} (\mathbb{Z} [t]) \rightarrow \St_{n+1}  (\mathbb{F}_p [t]) \rightarrow 0$$
where the homomorphism $\St_{n+1}  (\mathbb{Z} [t]) \rightarrow \SL_{n+1}  (\mathbb{F}_p [t])$ is induced by the map $x_{i,j} (t^{\mu}) \mapsto e_{i,j} (t^{\mu})$.  The subgroup $N$ is normally generated by elements of the form $(x_{i,j} (\pm t^m))^p$ where $1 \leq i,j \leq n+1, i \neq j$ and $m \in \mathbb{N} \cup \lbrace 0 \rbrace$ and we will show that every such element is in the normal closure of $\lbrace x_{i, j} (1)^p : 1 \leq i,j \leq n+1, i \neq j \rbrace$.  Indeed,  let $N'$ be the normal closure of $\lbrace x_{i, j} (1)^p : 1 \leq i,j \leq n+1, i \neq j \rbrace$ and fix some $1 \leq i,j \leq n+1, i \neq j$ and $m \in \mathbb{N} \cup \lbrace 0 \rbrace$.  Let $1 \leq k \leq n+1$ be such that $k \neq i,  k \neq j$.  Note that $[x_{i,k} (1)^p,  x_{k,j} (\pm t^m)] \in N'$.   Also note that in $\St_{n+1}  (\mathbb{Z} [t])$,  $[x_{i,k} (1),  [x_{i,k} (1),  x_{k,j} (\pm t^{m})]] =e$.  Thus,
\begin{align*}
x_{i,j} (\pm t^{m})^p = [x_{i,k} (1),  x_{k,j} (\pm t^{m})]^p  =^{\text{Lemma } \ref{[x,y]^p lemma}}  [x_{i,k} (1)^p,  x_{k,j} (\pm t^{m})] \in N'
\end{align*}
as needed.
\end{proof}

A variation of the above corollary:

\begin{corollary}
\label{presentation of SL_n+1 F_p [t] coro2}
For every $n \geq 3$,   the following is a presentation of $\SL_{n+1}  (\mathbb{F}_p [t])$:
Generators
$$\lbrace x_{i,j} (r) : 1 \leq i,j \leq n+1, i \neq j,  r \in \mathbb{F}_p [t],  \deg (r) \leq 3 \rbrace$$
and relations:
\begin{enumerate}
\item For every $1 \leq i,j \leq n+1, i \neq j$,  $x_{i,j} (0) = e$ (where $0$ there is the trivial element of $\mathbb{F}_p [t]$).
\item For every $1 \leq i,j \leq n+1, i \neq j$ and every $r_1, r_2 \in \mathbb{F}_p [t]$ of degree $\leq 3$, $x_{i,j} (r_1) x_{i,j} (r_2) = x_{i,j} (r_1 + r_2)$.
\item For every $1 \leq i,j,  i', j' \leq n+1, i \neq j,  i ' \neq j ',  j \neq i',  i' \neq j$ and every $r_1, r_2 \in \mathbb{F}_p [t]$ of degree $\leq 3$,
$[x_{i,j} (r_1),  x_{i',j'} (r_2)] =e$.
\item For every $1 \leq i,j,k \leq n+1$ pairwise distinct and every $r_1, r_2 \in \mathbb{F}_p [t]$ of degree $\leq 3$ with $\deg (r_1 r_2) \leq 3$,  $[x_{i,j} (r_1),  x_{j,k} (r_2)] =x_{i,k} (r_1 r_2)$.
\item For every $1 \leq i,j,k \leq n+1$ pairwise distinct and every $r_1, r_2,  r_1', r_2' \in \mathbb{F}_p [t]$ of degree $\leq 3$,  if $r_1 r_2 = r_1 ' r_2 '$,  then  $[x_{i,j} (r_1),  x_{j,k} (r_2)] =[x_{i,j} (r_1 '),  x_{j,k} (r_2 ')] $.
\end{enumerate}
\end{corollary}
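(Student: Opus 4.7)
The plan is to prove the corollary by constructing mutually inverse homomorphisms between $\SL_{n+1}(\mathbb{F}_p[t])$ and the abstract group $H$ defined by the stated presentation, using Corollary \ref{presentation of SL_n+1 F_p [t] coro} as the bridge.

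First I would define $\alpha : H \to \SL_{n+1}(\mathbb{F}_p[t])$ by $x_{i,j}(r) \mapsto e_{i,j}(r)$. This requires checking that each of the five relations holds under this interpretation: relations (1)--(4) are the standard Steinberg identities for elementary matrices, and relation (5) holds because both sides equal $e_{i,k}(r_1 r_2) = e_{i,k}(r_1' r_2')$ via relation (4) at the matrix level (which holds in $\SL_{n+1}(\mathbb{F}_p[t])$ without any degree restriction). Surjectivity of $\alpha$ is immediate because the generators $x_{i,j}(t^\mu)$ with $|\mu| \leq 3$ lie among the generators of $H$ and already generate $\SL_{n+1}(\mathbb{F}_p[t])$ by Corollary \ref{presentation of SL_n+1 F_p [t] coro}.

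For the inverse $\beta : \SL_{n+1}(\mathbb{F}_p[t]) \to H$, I would use the presentation of Corollary \ref{presentation of SL_n+1 F_p [t] coro}, sending $x_{i,j}(t^\mu) \mapsto x_{i,j}(t^\mu)$, where on the right $t^\mu = (-1)^{\mu_0} t^{\mu_1}$ is viewed as a polynomial in $\mathbb{F}_p[t]$. The main verification is relation (1) of Corollary \ref{presentation of SL_n+1 F_p [t] coro}: iterating relation (2) of $H$ gives $x_{i,j}(1)^p = x_{i,j}(p \cdot 1) = x_{i,j}(0) = e$ in $H$, using $p \cdot 1 = 0$ in $\mathbb{F}_p$ together with relation (1) of $H$. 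Relations (2)--(4) of Corollary \ref{presentation of SL_n+1 F_p [t] coro} are transparent counterparts of relations (2)--(4) of $H$; the hypothesis $|\mu + \mu'| \leq 3$ guarantees that the $\deg(r_1 r_2) \leq 3$ restriction in $H$'s relation (4) is met.

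Finally, I would verify the compositions are the identity. $\alpha \circ \beta$ fixes each $x_{i,j}(t^\mu)$ generator, and hence is the identity. For $\beta \circ \alpha$, given a generator $x_{i,j}(r) \in H$ with $r = \sum_{k=0}^3 c_k t^k$ and $c_k \in \{0, \ldots, p-1\}$, the composition takes it through $e_{i,j}(r) = \prod_k e_{i,j}(t^k)^{c_k}$ and back to $\prod_k x_{i,j}(t^k)^{c_k}$ in $H$, which equals $x_{i,j}(r)$ by iterated application of relation (2) of $H$. The argument is essentially routine verification; the only potential pitfall is tracking the degree bounds on products when invoking relation (4) of $H$, but these restrictions are designed precisely to accommodate the degrees appearing in the Corollary \ref{presentation of SL_n+1 F_p [t] coro} presentation, so no genuine obstacle arises.
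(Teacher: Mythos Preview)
Your proposal is correct and follows essentially the same route as the paper: both arguments set up the map $H \to \SL_{n+1}(\mathbb{F}_p[t])$ via $x_{i,j}(r)\mapsto e_{i,j}(r)$ and the map in the other direction coming from the presentation of Corollary~\ref{presentation of SL_n+1 F_p [t] coro}. The only difference is cosmetic: the paper observes that the composite $\Gamma' \xrightarrow{\phi'} \Gamma \xrightarrow{\phi} \SL_{n+1}(\mathbb{F}_p[t])$ is already known to be an isomorphism (this is exactly Corollary~\ref{presentation of SL_n+1 F_p [t] coro}), and since both $\phi,\phi'$ are surjective it follows immediately that each is an isomorphism---so the explicit verification that $\beta\circ\alpha$ fixes every generator $x_{i,j}(r)$ is not needed.
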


\begin{proof}
Let $\Gamma$ be the abstract group defined above and $\Gamma '$ be the abstract group defined in Corollary \ref{presentation of SL_n+1 F_p [t] coro}.  In order to distinguish between the elements of these groups,  we will denote the generator of $\Gamma$ by $x_{i,j}$ and the generators of $\Gamma '$ by $y_{i,j}$.

Let $\phi ' : \Gamma ' \rightarrow \Gamma$ be the map induced by $y_{i,j} (t^\mu) \mapsto x_{i,j} (t^{\mu})$ and $\phi :\Gamma \rightarrow \SL_{n+1} (\mathbb{F}_p [t])$ be the map induced by $x_{i,j} (r) \rightarrow e_{i,j} (r)$.  We note that both $\phi' $ and $\phi$ are surjective homomorphisms and that by Corollary \ref{presentation of SL_n+1 F_p [t] coro},  $\phi \circ \phi ' : \Gamma ' \rightarrow \SL_{n+1} (\mathbb{F}_p [t])$ is an isomorphism.  It follows that both $\phi$ and $\phi '$ are isomorphisms and in particular that $\Gamma$ is isomorphic to $\SL_{n+1} (\mathbb{F}_p [t])$.
\end{proof}

In the sequel,  it will be useful to think of the presentation of $\SL_{n+1}  (\mathbb{F}_p [t])$ in terms of roots.  We introduce the following definition:
\begin{definition}[Relations indexed by a pair of roots]
\label{relations indexed by roots def}
Fix an odd prime $p$ and $n \geq 2$.  Let $S = \lbrace x_{i,j} (r) : 1 \leq i,j \leq n+1,  i \neq j,  r \in \mathbb{F}_p [t], \deg(r) \leq 3 \rbrace$.  For two non-opposite roots $(i,j), (i',j')$ in the $A_n$ root system,  we will define the $\lbrace (i,j),  (i',j') \rbrace$ relations (on the symbols of $S$) as follows:
\begin{itemize}
\item If $(i,j) = (i',j')$,  the $\lbrace (i,j) \rbrace$ relations are the relations:
\begin{enumerate}
\item $x_{i,j} (0) = e$ (where $0$ there is the trivial element of $\mathbb{F}_p [t]$).
\item For every $r_1, r_2 \in \mathbb{F}_p [t]$ of degree $\leq 3$, $x_{i,j} (r_1) x_{i,j} (r_2) = x_{i,j} (r_1 + r_2)$.
\end{enumerate}
\item If $(i,j) \neq (i',j')$,  $j \neq i'$ and $j' \neq i$,  the $\lbrace (i,j), (i',j') \rbrace$ relations are the relations: For every $r_1, r_2 \in \mathbb{F}_p [t]$ of degree $\leq 3$,  $[x_{i,j} (r_1),  x_{i',j'} (r_2)] =e$.
\item If there are $i'',  k'', j''$ such that $\lbrace (i,j),  (i',j') \rbrace = \lbrace (i'', j''),  (j'', k'') \rbrace$,   the $\lbrace (i'', j''),  (j'', k'') \rbrace$ relations are the relations:
\begin{enumerate}
\item For every $r_1, r_2 \in \mathbb{F}_p [t]$ of degree $\leq 3$ with $\deg (r_1 r_2) \leq 3$,  $[x_{i'',j''} (r_1),  x_{j'',k ''} (r_2)] =x_{i'',k ''} (r_1 r_2)$.
\item For every $r_1, r_2,  r_1', r_2' \in \mathbb{F}_p [t]$ of degree $\leq 3$,  if $r_1 r_2 = r_1 ' r_2 '$,  then  $[x_{i'',j''} (r_1),  x_{j'',k''} (r_2)] =[x_{i'',j''} (r_1 '),  x_{j'',k''} (r_2 ')] $.
\end{enumerate}
\end{itemize}
\end{definition}

With this Definition,  Corollary \ref{presentation of SL_n+1 F_p [t] coro2} can be stated succinctly as:
\begin{corollary}
\label{pres. of SLn roots coro}
For every $n \geq 3$,   the following is a presentation of $\SL_{n+1}  (\mathbb{F}_p [t])$:
The generating set is
$$\lbrace x_{i,j} (r) : 1 \leq i,j \leq n+1, i \neq j,  r \in \mathbb{F}_p [t],  \deg (r) \leq 3 \rbrace$$
and the relations are all the $\lbrace (i,j),  (i',j') \rbrace$ relations for all two non-opposite roots $(i,j), (i',j')$ in the $A_n$ root system.
\end{corollary}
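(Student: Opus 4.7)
The plan is to deduce this corollary from the immediately preceding Corollary \ref{presentation of SL_n+1 F_p [t] coro2} by a purely formal unpacking of Definition \ref{relations indexed by roots def}. The generating set in the two statements is literally the same, so everything reduces to checking that the family of relations produced by ranging $\{(i,j),(i',j')\}$ over all unordered pairs of non-opposite roots of $A_n$ coincides with the list of relations (1)--(5) in Corollary \ref{presentation of SL_n+1 F_p [t] coro2}.

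The verification is a short case analysis on the pair $\{(i,j),(i',j')\}$. In the degenerate case $(i,j)=(i',j')$, Definition \ref{relations indexed by roots def} yields exactly relations (1) and (2). In the case where the two roots are distinct and satisfy $j\neq i'$ and $j'\neq i$, it yields exactly relation (3), the commutation of two elementary root subgroups whose ``sum'' is not a root. In the remaining case, at least one of $j=i'$ or $j'=i$ holds, but not both (both would force $(i,j)=(j',i')$, i.e.\ the roots to be opposite); after relabelling so that $\{(i,j),(i',j')\}=\{(i'',j''),(j'',k'')\}$, Definition \ref{relations indexed by roots def} then yields precisely relations (4) and (5), namely the Steinberg commutator formula together with the fact that $[x_{i'',j''}(r_1),x_{j'',k''}(r_2)]$ depends only on the product $r_1 r_2$.

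These three cases are mutually exclusive and together exhaust all unordered pairs of non-opposite roots; the hypothesis ``non-opposite'' is exactly what rules out $(i,j)=(j',i')$, which is the one excluded configuration in Corollary \ref{presentation of SL_n+1 F_p [t] coro2}. Hence the aggregate collection of $\{(i,j),(i',j')\}$ relations equals the relation list of Corollary \ref{presentation of SL_n+1 F_p [t] coro2}, and the desired presentation of $\SL_{n+1}(\mathbb{F}_p[t])$ follows.

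There is no real obstacle here: the corollary is a relabelling of Corollary \ref{presentation of SL_n+1 F_p [t] coro2} designed to make the subsequent root-theoretic arguments (identifying relations with pairs of roots in $A_n$ and propagating across chambers) notationally clean. The only point that requires a moment's thought is verifying that the three cases of Definition \ref{relations indexed by roots def} genuinely partition the set of non-opposite pairs, which the brief case analysis above settles.
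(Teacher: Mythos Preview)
Your proposal is correct and matches the paper's approach exactly: the paper does not give a separate proof of this corollary, instead introducing it with the sentence ``With this Definition, Corollary \ref{presentation of SL_n+1 F_p [t] coro2} can be stated succinctly as:'', i.e.\ treating it as a verbatim restatement of Corollary \ref{presentation of SL_n+1 F_p [t] coro2} in the language of Definition \ref{relations indexed by roots def}. Your case analysis just spells out why this restatement is valid, which is precisely the intended content.
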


\subsection{Presentation of unipotent groups}

For fixed $n \geq 2$,  $p$ prime and $d \in \mathbb{N} \cup \lbrace 0 \rbrace$,  denote $\Unip_n (\mathbb{F}_[t] ; d)$ to be the subgroup of $\SL_n (\mathbb{F}_p [t])$ generated by the set $\langle e_{i,i+1} (r) : r \in \mathbb{F}_p [t], \deg (r) \leq d \rangle$.
More explicitly,  $\Unip_n (\mathbb{F}_[t] ; 3)$ is exactly the subgroup $K_0$ defined above.

As in \cite{KO-cobound},  this group has the following presentation:
\begin{theorem}
\label{Unip pres thm}
For every $n \geq 4$,  every odd prime $p$ and every $d \in \mathbb{N}$,   $\Unip_n (\mathbb{F}_[t] ; d)$ has the following presentation: Generators
$$\lbrace x_{i,i+1} (r),   : r \in \mathbb{F}_p [t],  \deg (r) \leq d,  1 \leq i \leq i-1 \rbrace$$
and relations:
\begin{enumerate}
\item For every $1 \leq i \leq n-1$,  $x_{i,i+1} (0) =e$.
\item For every $1 \leq i \leq n-1$ and every $r_1,  r_2 \in \mathbb{F}_p [t]$ of degree $\leq d$,  $x_{i,i+1} (r_1) x_{i,i+1} (r_2)=x_{i,i+1} (r_1 + r_2)$.
\item For every $1 \leq i, j \leq n-1$ with $i+1 < j$ and every $r_1,  r_2 \in \mathbb{F}_p [t]$ of degree $\leq d$,
$$[x_{i,i+1} (r_1),  x_{j,j+1} (r_2)] =e .$$
\item For every $1 \leq i  \leq n-2$ with and every $r_1,  r_2,  r_3 \in \mathbb{F}_p [t]$ of degree $\leq d$,
$$[[x_{i, i+1} (r_1),  x_{i+1, i+2} (r_2)],  x_{i, i+1} (r_3) ] =e ,$$
$$[[x_{i, i+1} (r_1),  x_{i+1, i+2} (r_2)],  x_{i+1, i+2} (r_3) ] =e.$$
\item For every $1 \leq i  \leq n-2$ with and every $r_1,  r_2,  r_1 ',  r_2 '\in \mathbb{F}_p [t]$ of degree $\leq d$,  if $r_1 r_2 = r_1 ' r_2 '$,  then
$$[x_{i, i+1} (r_1),  x_{i+1, i+2} (r_2)] = [x_{i, i+1} (r_1 '),  x_{i+1, i+2} (r_1 ' r_2 ').$$
\end{enumerate}
\end{theorem}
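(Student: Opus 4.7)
The plan is to show that the abstract group $U$ defined by the given presentation is isomorphic to $\Unip_n(\mathbb{F}_p[t]; d)$. Since each of relations (1)--(5) is immediately verified in $\Unip_n(\mathbb{F}_p[t]; d)$ (they are consequences of the Steinberg relations supplied by Corollary \ref{presentation of SL_n+1 F_p [t] coro2}), there is a canonical surjective homomorphism $\phi : U \to \Unip_n(\mathbb{F}_p[t]; d)$ sending $x_{i,i+1}(r)$ to $e_{i,i+1}(r)$. The remaining task is to show that $\phi$ is injective; I will do this by producing a normal form for elements of $U$ that matches the set of upper unitriangular matrices with entries $r_{i,j} \in \mathbb{F}_p[t]$ satisfying $\deg(r_{i,j}) \leq d(j-i)$ in position $(i,j)$.

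The main step is to lift the generators to a full Steinberg-style family inside $U$. For every $1 \leq i < j \leq n$ and every $r \in \mathbb{F}_p[t]$ with $\deg(r) \leq d(j-i)$, define $\tilde{x}_{i,j}(r) \in U$ inductively on $j - i$: set $\tilde{x}_{i,i+1}(r) = x_{i,i+1}(r)$, and for $j > i+1$ choose any factorization $r = r_1 r_2$ with $\deg(r_1) \leq d$ and $\deg(r_2) \leq d(j-i-1)$, and set $\tilde{x}_{i,j}(r) = [x_{i,i+1}(r_1), \tilde{x}_{i+1,j}(r_2)]$. Well-definedness (independence of the factorization) is the first non-trivial claim and is proved by a joint induction on $j - i$: the base case $j - i = 2$ is exactly relation (5), while the inductive step uses relations (3) and (4) to commute inner letters past the outer $x_{i,i+1}(r_1)$. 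Using the same ingredients one then proves the generalized Steinberg-type commutation identities $[\tilde{x}_{i,j}(r_1), \tilde{x}_{i',j'}(r_2)] = e$ whenever $j \neq i'$ and $j' \neq i$, together with $[\tilde{x}_{i,j}(r_1), \tilde{x}_{j,k}(r_2)] = \tilde{x}_{i,k}(r_1 r_2)$ when $i < j < k$ and $\deg(r_1 r_2) \leq d(k-i)$.

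Once these relations are in place, a standard sorting argument produces a normal form: every element of $U$ equals a product $\prod_{i < j} \tilde{x}_{i,j}(r_{i,j})$ taken in a fixed order (say lexicographic in $(i,j)$). One proves this by repeatedly applying the commutation identities above to move each $x_{i,i+1}(r)$ letter of a word to its correct slot, collecting deeper commutator corrections that get absorbed into $\tilde{x}_{i,j}$'s with larger $j - i$; the filtration by depth terminates since no $\tilde{x}_{i,j}$ exists for $j - i > n - 1$. Under $\phi$, the element $\tilde{x}_{i,j}(r)$ maps to the elementary matrix $e_{i,j}(r)$ (by induction on $j - i$ using the Steinberg commutator identity in $\SL_{n+1}(\mathbb{F}_p[t])$), so the normal form of an element of $U$ maps to a uniquely determined upper unitriangular matrix. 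Distinct normal forms yield matrices with distinct $(i,j)$-entries, so $\phi$ is injective.

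The technical heart of the proof, and the main obstacle, is the simultaneous induction establishing well-definedness of the $\tilde{x}_{i,j}(r)$ together with the commutation relations among them: one must order the induction so that every instance needed at depth $j - i$ rests only on facts proved at strictly smaller depth, and one must check that the ``uniqueness'' clause (5) suffices to handle every rearrangement arising during the propagation. The hypothesis $n \geq 4$ enters precisely here, ensuring that for any pair of simple roots involved in a given commutator there exists a third simple root to serve as a ``witness index'' in the derivation, analogously to the role of $n \geq 3$ in Corollary \ref{presentation of SL_n+1 F_p [t] coro}.
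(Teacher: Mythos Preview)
The paper does not supply a proof of this theorem; it is quoted from \cite{KO-cobound}. So there is no ``paper's own proof'' to compare against, and your proposal must stand on its own merits.

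Your overall strategy (surjection followed by a normal-form/counting argument built from lifted elements $\tilde{x}_{i,j}(r)$) is the standard one and is sound in outline. However, there is a concrete gap in the construction of the $\tilde{x}_{i,j}(r)$. You write: ``for $j>i+1$ choose any factorization $r=r_1 r_2$ with $\deg(r_1)\le d$ and $\deg(r_2)\le d(j-i-1)$''. Such a factorization need not exist: already for $j-i=2$, an irreducible polynomial $r\in\mathbb{F}_p[t]$ of degree $2d$ cannot be written as a product of two polynomials of degree $\le d$ in $\mathbb{F}_p[t]$. So as written, $\tilde{x}_{i,j}(r)$ is simply undefined for many $r$, and the subsequent well-definedness and commutation claims have no object to refer to.

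The repair is routine but must be made explicit. Define $\tilde{x}_{i,j}$ first on monomials $c\,t^k$ (which \emph{do} factor, e.g.\ $c\,t^k=(c\,t^{\min(k,d)})\cdot t^{k-\min(k,d)}$), prove well-definedness and the additivity relation $\tilde{x}_{i,j}(r)\tilde{x}_{i,j}(r')=\tilde{x}_{i,j}(r+r')$ for monomial arguments using relations (2)--(5), and only then extend to general $r$ by writing $r$ as a sum of monomials and setting $\tilde{x}_{i,j}(r)$ equal to the corresponding product. The independence of this product from the ordering and from the chosen monomial decomposition is part of the same joint induction you already allude to. Once this is in place, the rest of your sketch (Steinberg-type commutation, sorting into normal form, injectivity of $\phi$) goes through.
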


\section{Cohomology of quotients of complexes}

\begin{theorem}
\label{N mod X thm}
Let $X$ be a connected pure $2$-dimensional simplicial complex and $N$ be a group acting on $X$.  Assume that $p: X \rightarrow N \backslash X$ is rigid.  For every group $\Lambda$,  if $H^1 (X, \Lambda) = H^1 (N, \Lambda) = 0$,  then $H^1 (N \backslash X,  \Lambda) =0$ (the cohomology of $N$ is taken with the trivial action of $N$ on $\Lambda$).
\end{theorem}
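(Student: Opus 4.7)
The plan is to pull back a $1$-cocycle on $N \backslash X$ to $X$, trivialise it via $H^1(X,\Lambda)=0$, and then show that the resulting $0$-cochain is $N$-invariant by means of $H^1(N,\Lambda)=0$; an $N$-invariant $0$-cochain descends to a witness that $\bar\phi$ is a coboundary on $N \backslash X$.

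Concretely, given $\bar\phi \in Z^1(N \backslash X, \Lambda)$, define $\phi \in C^1(X,\Lambda)$ by $\phi((v_0,v_1)) = \bar\phi((p(v_0),p(v_1)))$. Rigidity of $p$ ensures that it sends edges and $2$-simplices of $X$ to edges and $2$-simplices of $N\backslash X$, so $\phi$ is a well-defined $1$-cochain and $d_1 \phi = p^*(d_1 \bar\phi) = e_\Lambda$, i.e.\ $\phi \in Z^1(X,\Lambda)$. Using $H^1(X,\Lambda) = 0$, choose $\psi \in C^0(X,\Lambda)$ with $\phi = d_0 \psi$, i.e.\ $\phi((v_0,v_1)) = \psi(v_0)\psi(v_1)^{-1}$.

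The key observation is that $\phi$ is $N$-invariant: since $p(g.v) = p(v)$ for every $g \in N$, $\phi((g.v_0, g.v_1)) = \phi((v_0, v_1))$, which unwinds to $\psi(v_0)^{-1}\psi(g.v_0) = \psi(v_1)^{-1}\psi(g.v_1)$ for every edge $\lbrace v_0, v_1 \rbrace$. Since $X$ is connected, the element $\chi(g) := \psi(v)^{-1}\psi(g.v)$ is independent of $v$ and defines a map $\chi : N \to \Lambda$. Evaluating $\chi(g)$ at the vertex $h.v$ and plugging in yields $\chi(gh) = \psi(v)^{-1}\psi(h.v) \cdot \psi(h.v)^{-1}\psi(g.(h.v)) = \chi(h)\chi(g)$, so $\rho(g) := \chi(g^{-1})$ is an honest group homomorphism $N \to \Lambda$, i.e.\ a $1$-cocycle in group cohomology with trivial $N$-action on $\Lambda$. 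The assumption $H^1(N,\Lambda) = 0$ amounts to saying that every such homomorphism is $\Lambda$-conjugate to the trivial one and is therefore trivial itself, so $\chi \equiv e_\Lambda$ and $\psi$ is constant on $N$-orbits.

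Hence $\psi$ descends to $\bar\psi \in C^0(N \backslash X, \Lambda)$ via $\bar\psi([v]) := \psi(v)$, and verifying on an arbitrary edge $\lbrace [v_0],[v_1] \rbrace$ (using rigidity of $p$ to lift) shows $d_0 \bar\psi = \bar\phi$, so $\bar\phi \in B^1(N \backslash X, \Lambda)$. The main subtle point is the non-abelian bookkeeping: $\chi$ turns out to be an anti-homomorphism, so one must invert to land in the standard group-cohomology picture, and one must verify that the non-abelian definition of $H^1(N,\Lambda)$ with trivial action really reduces to $\mathrm{Hom}(N,\Lambda)$ modulo $\Lambda$-conjugation, so that the hypothesis $H^1(N,\Lambda)=0$ forces the homomorphism to be trivial rather than merely conjugate to trivial (which, since conjugation fixes the trivial homomorphism, is indeed the same).
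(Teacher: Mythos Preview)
Your proposal is correct and follows essentially the same approach as the paper: pull back the cocycle, trivialise it on $X$, use connectedness and $N$-invariance to build a homomorphism $N\to\Lambda$ from the resulting $0$-cochain, and then invoke $H^1(N,\Lambda)=0$ to show the $0$-cochain is $N$-invariant and hence descends. The only cosmetic difference is that the paper defines $f_v(g)=\psi'(v)(\psi'(g.v))^{-1}$ (the inverse of your $\chi$), which comes out as a homomorphism directly and spares the anti-homomorphism detour.
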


\begin{proof}
Let $\phi \in Z^1 (N \backslash X,  \Lambda)$.  We need to show that $\phi \in B^1 (N \backslash X, \Lambda)$, i.e., that there is $\psi : (N \backslash X )(0) \rightarrow  \Lambda$ such that for every $(u,v) \in (N \backslash X)_{\ord} (1)$,  $\phi ((u,v)) = \psi (u) (\psi (v))^{-1}$.

Note that for every $\lbrace u, v \rbrace \in X(1)$ it holds that $p (u) \neq p (v)$ since $p$ is rigid.  Thus for every  $(u,v) \in X_{\ord} (1)$ it holds that $(p(u), p(v)) \in (N \backslash X)_{\ord} (1)$.   Let $p^* \phi$ be the pullback of $\phi$,  i.e.,  for every $(u,v) \in X_{\ord} (1)$,  $(p^* \phi) ((u,v)) = \phi ((p(u), p(v)))$.  We note that for every $g \in N$ and every $(u,v) \in X_{\ord} (1)$,
$$(p^* \phi) ((g.u, g.v)) =  \phi ((p(g. u), p(g. v)) =  \phi ((p(u), p(v)) = (p^* \phi) ((u, v)).$$

Note that for every $(u,v,w) \in X_\ord (2)$ it holds that $(p (u),  p(v),  p(w)) \in (N \backslash X)_\ord (2)$ (since $p$ is rigid) and thus for every $(u,v,w) \in X_\ord (2)$,
$$d_1 (p^* \phi) ((u,v,w)) = \phi ((p(u), p(v))) \phi ((p(v), p(w)))  \phi ((p(w), p(u))) = e,$$
i.e.,  $p^* \phi \in Z^1 (X, \Lambda)$.  By assumption $H^1 (X, \Lambda) = 0$,  i.e.,  there is $\psi ' \in B^1 (X, \Lambda)$ such that for every $(u, v) \in X_\ord (1)$,  $p^* \phi ((u,v))= \psi ' (u) (\psi ' (v))^{-1}$.

Given any $u, v \in X(0)$,  by the assumption that $X$ is connected,  we can choose an oriented path from $u$ to $v$: $(u, v_1),  (v_1,v_2),...,(v_k, v) \in X_{\ord} (1)$.  For every such a path it holds that
\begin{align*}
& (p^* \phi) ((u,v_1)) (p^* \phi) ((v_1, v_2)) ... (p^* \phi) ((v_k,v)) =   \\
& \psi ' (u) (\psi ' (v_1))^{-1} \psi ' (v_1) (\psi ' (v_2))^{-1} ... \psi ' (v_k) (\psi ' (v))^{-1} = \psi ' (u) (\psi ' (v))^{-1}.
\end{align*}
Thus,  for every $g \in N$,
\begin{align*}
& \psi ' (g. u) (\psi ' (g. v))^{-1} = \\
& (p^* \phi) ((g. u, g. v_1)) (p^* \phi) ((g. v_1, g. v_2)) ... (p^* \phi) ((g. v_k, g. v)) = \\
& (p^* \phi) ((u,v_1)) (p^* \phi) ((v_1, v_2)) ... (p^* \phi) ((v_k,v)) = \psi ' (u) (\psi ' (v))^{-1}.
\end{align*}
In particular,  for every $g_1, g_2 \in N$ and every $v \in X(0)$,
\begin{equation}
\label{prod of psi ' inv under act}
\psi '(g_1.v) (\psi ' ((g_1. (g_2. v)))^{-1} = \psi '(v) (\psi ' (g_2. v))^{-1}.
\end{equation}

Fix $v \in X(0)$ and define $f_{v} : N \rightarrow \Lambda$ by $f_{v} (g) = \psi ' (v) (\psi ' (g. v))^{-1}$ for every $g \in N$.  We note that for every $g_1,  g_2 \in N$ it holds that
\begin{align*}
f_{v} (g_1 g_2) =    \psi ' (v) (\psi ' ((g_1 g_2). v))^{-1} = \\
  \psi ' (v) (\psi ' (g_1.  v))^{-1} \psi '(g_1.v) (\psi ' ((g_1. (g_2. v)))^{-1} =^{\eqref{prod of psi ' inv under act}} \\
  \psi ' (v) (\psi ' (g_1.  v))^{-1} \psi '(v) (\psi ' (g_2. v))^{-1}  = f_v (g_1) f_v (g_2),
\end{align*}
i.e.,  $f_v  : N \rightarrow \Lambda$ is a homomorphism.  By the assumption that $H^1 (N, \Lambda) = 0$ it follows that $f_v \equiv e$,  i.e.,  that for every $g \in N$,  $\psi ' (v)=  \psi ' (g. v)$.  This is true for every $v \in X(0)$,  and thus it follows that $\psi '$ is constant on $N$-orbits of vertices on $X$.  Equivalently,  for every $u \in (N \backslash X) (0)$ and every $v_1, v_2 \in p^{-1} (u)$ it holds that $\psi ' (v_1) = \psi ' (v_2)$,  thus for such $u$ we will denote $\psi ' (p^{-1} (u)) = \psi ' (v)$ for some/any $v \in p^{-1} (u)$.

Define $\psi :  (N \backslash X) (0) \rightarrow \Lambda$ by $\psi (u) = \psi ' (p^{-1} (u))$ for every $u \in (N \backslash X) (0)$.  For every $(u,v) \in (N \backslash X)_{\ord} (1)$,  there is $(u' , v' ) \in X_{\ord} (1)$ such that $p(u') = u,  p (v') =v$.  For such $(u' ,v')$ it holds that
\begin{align*}
\phi ((u,v)) = (p^* \phi) ((u', v')) =  \psi ' (u ') (\psi ' (v'))^{-1} = \psi (u) (\psi  (v))^{-1}  = d_0 \psi ((u,v)),
\end{align*}
i.e.,  $\phi \in B^1 (N \backslash X, \Lambda)$ as needed.
\end{proof}

\begin{corollary}
\label{general coset complex H^1 coro}
Let $n \geq 2$ and $\Gamma$ a group with subgroups $K_i,  0 \leq i \leq n$ and a normal subgroup $N \triangleleft \Gamma$.   For any group $\Lambda$,  if
$H^1 (N, \Lambda) = 0$ and
$$H^1 (\mathcal{CC} (\Gamma,  \lbrace K_i \rbrace_{0 \leq i \leq n} ), \Lambda) =0,$$
then
$$H^1 (\mathcal{CC} (N \backslash \Gamma,  \lbrace N \backslash NK_i \rbrace_{0 \leq i \leq n} ), \Lambda) =0.$$
\end{corollary}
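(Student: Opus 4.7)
The plan is to combine Proposition \ref{N backs X is a coset complex prop} with Theorem \ref{N mod X thm}; the corollary then falls out almost immediately. Set $X = \mathcal{CC}(\Gamma, \lbrace K_i \rbrace_{0 \leq i \leq n})$. By Proposition \ref{N backs X is a coset complex prop}, the coset complex $\mathcal{CC}(N \backslash \Gamma, \lbrace N \backslash NK_i \rbrace_{0 \leq i \leq n})$ is isomorphic to the quotient $N \backslash X$, so the problem reduces to proving $H^1(N \backslash X, \Lambda) = 0$.

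Next, I would verify that the projection $p : X \to N \backslash X$ is rigid, which is the hypothesis required by Theorem \ref{N mod X thm}. The canonical action of $\Gamma$ on $X$ sends $hK_i$ to $(gh)K_i$ and hence preserves the color indexed by $i$; its restriction to $N$ is therefore also color-preserving. Since $X$ is partite by construction, Observation \ref{color presev action is rigid obs} guarantees that $p$ is rigid.

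Theorem \ref{N mod X thm} is formally stated for connected pure $2$-dimensional complexes, while here $X$ is pure $n$-dimensional for $n \geq 2$. This is only a formal gap: $H^1$ depends only on the $2$-skeleton, and for $n \geq 2$ the $2$-skeleton of $X$ is itself pure $2$-dimensional (every edge lies in an $n$-simplex and hence in a triangle). The color-preserving $N$-action restricts to the $2$-skeleton and its quotient is the $2$-skeleton of $N \backslash X$, so applying Theorem \ref{N mod X thm} to the $2$-skeleton, with the hypothesized inputs $H^1(X, \Lambda) = 0$ and $H^1(N, \Lambda) = 0$, delivers $H^1(N \backslash X, \Lambda) = 0$ as required.

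The one subtlety I anticipate is the connectedness assumption in Theorem \ref{N mod X thm}. In the applications driving this paper the group $\Gamma$ is generated by $K_0, \ldots, K_n$, which makes $X$ connected and sidesteps the issue; in full generality one would handle each $N$-invariant union of connected components separately, using that the hypothesis $H^1(X, \Lambda) = 0$ passes to each component. Beyond this bookkeeping point, no serious obstacle is expected, since all the real content has been absorbed into Theorem \ref{N mod X thm} and Proposition \ref{N backs X is a coset complex prop}.
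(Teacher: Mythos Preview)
Your proposal is correct and follows essentially the same approach as the paper: identify $N\backslash X$ with the target coset complex via Proposition~\ref{N backs X is a coset complex prop}, use Observation~\ref{color presev action is rigid obs} to get rigidity of $p$ from the color-preserving $N$-action, and then invoke Theorem~\ref{N mod X thm}. Your additional remarks on the $2$-skeleton reduction and connectedness are careful clarifications that the paper's proof leaves implicit.
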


\begin{proof}
Denote $X = \mathcal{CC} (\Gamma,  \lbrace K_i \rbrace_{0 \leq i \leq n} )$.   As noted in Observation \ref{color presev action is rigid obs},  the map $p : X \rightarrow N \backslash X$ is rigid, since the action of $N$ on $X$ is color preserving.  Thus,  by the Theorem above,  $H^1 (N \backslash X, \Lambda) =0$.
By Proposition \ref{N backs X is a coset complex prop},  $N \backslash X$ is isomorphic to $\mathcal{CC} (N \backslash \Gamma,  \lbrace N \backslash NK_i \rbrace_{0 \leq i \leq n})$.
\end{proof}

Below,  we will need the following instantiation of this Corollary:
\begin{corollary}
\label{p coset complex H^1 coro}
Let $n \geq 2$,  $p$ a prime and $\Gamma$ a group with subgroups $K_i,  0 \leq i \leq n$ and a normal subgroup $N \triangleleft \Gamma$.  Also let $\Lambda$ be a group that does not have non-trivial elements of order $p$.  If $N$ is generated by elements of order $p$ and
$$H^1 (\mathcal{CC} (\Gamma,  \lbrace K_i \rbrace_{0 \leq i \leq n} ), \Lambda) =0,$$
then
$$H^1 (\mathcal{CC} (N \backslash \Gamma,  \lbrace N \backslash NK_i \rbrace_{0 \leq i \leq n} ), \Lambda) =0.$$
\end{corollary}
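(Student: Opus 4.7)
The plan is to derive this corollary directly from Corollary \ref{general coset complex H^1 coro} by checking its single remaining hypothesis, namely $H^1(N, \Lambda) = 0$ for the abstract group $N$ with trivial action on $\Lambda$. The hypothesis on $H^1$ of the coset complex is already assumed, so once vanishing for $N$ is established the earlier corollary applies verbatim and yields the conclusion.

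First I would translate $H^1(N, \Lambda) = 0$ into a purely group-theoretic statement. With trivial action, a $1$-cocycle on $N$ with values in $\Lambda$ is exactly a homomorphism $f : N \to \Lambda$, and the cohomology equivalence specialises to conjugation by a fixed element of $\Lambda$. Since the trivial homomorphism is the unique element of its conjugacy class, $H^1(N,\Lambda)=0$ is equivalent to the statement that every homomorphism $N \to \Lambda$ is the trivial one. (This matches how $H^1(N,\Lambda)=0$ is used inside the proof of Theorem \ref{N mod X thm}, where it is applied to conclude that a constructed homomorphism $f_v : N \to \Lambda$ is identically $e$.)

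Next I would exploit the two remaining hypotheses together. Pick a generating set $S \subseteq N$ whose elements have order $p$, and let $f : N \to \Lambda$ be an arbitrary homomorphism. For every $s \in S$, $f(s)^p = f(s^p) = e_\Lambda$, so $f(s)$ is an element of $\Lambda$ whose order divides $p$; since $\Lambda$ contains no non-trivial element of order $p$, this forces $f(s) = e_\Lambda$. As $S$ generates $N$ it follows that $f \equiv e_\Lambda$, and hence $H^1(N,\Lambda) = 0$.

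I do not expect any genuine obstacle here: the corollary is essentially a one-line specialisation of Corollary \ref{general coset complex H^1 coro}. The only thing to verify is that the order-$p$ generation hypothesis on $N$, combined with the absence of order-$p$ elements in $\Lambda$, kills every homomorphism $N \to \Lambda$, and this is immediate. The substantive content lives entirely inside Corollary \ref{general coset complex H^1 coro} (and hence Theorem \ref{N mod X thm}), which handles the passage from cohomology of $X$ and of $N$ to cohomology of $N \backslash X$.
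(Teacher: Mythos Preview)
Your proposal is correct and matches the paper's proof essentially line for line: reduce to Corollary~\ref{general coset complex H^1 coro}, then verify $H^1(N,\Lambda)=0$ by observing that any homomorphism $N\to\Lambda$ must send each order-$p$ generator to the identity. There is nothing to add.
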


\begin{proof}
By Corollary \ref{general coset complex H^1 coro},  it is enough to show that $H^1 (N , \Lambda) =0$.  By definition,  this is equivalent to showing that the only homomorphism $\phi : N \rightarrow \Lambda$ is the trivial homomorphism.  Let $\phi : N \rightarrow \Lambda$ and let $h \in N$ of order $p$ (i.e.,   $h^p = e$).  Then $\phi (h)^p =e$ and since $\Lambda$ does not have non-trivial elements, it follows that $\phi (h) =e$.  By our assumption,  $N$ is generated by elements of order $p$ and thus $\phi$ maps a generating set of $N$ to the identity and thus $\phi$ is the trivial map.
\end{proof}

\section{Chamber and pre-chamber groups}

For $n \geq 3$, $p$ prime,  we define the abstract group $G_{n,p}$,  that we will refer to as \textit{$(n,p)$-chamber group} (or \textit{chamber group} in case $n,p$ are clear from the context)  as the "uni-upper-triangular" subgroup that arise from the presentation of $\SL_n (\mathbb{F}_p [t])$ given in Corollary \ref{pres. of SLn roots coro}.

Recall that for $n \geq 2$,  $C_0$ was defined as the Weyl chamber $\lbrace (i,j) : 1 \leq i < j \leq n+1 \rbrace$ of the $A_n$ root system.  Define the $(n,p)$-chamber group to be the group $G_{n,p}$ with generators
$$\lbrace x_{i,j} (r) : 1 \leq i < j \leq n+1,  r \in \mathbb{F}_p [t], \deg (r) \leq 3 \rbrace$$
and the relations are all the $\lbrace (i,j), (i',j') \rbrace$ relations of Definition \ref{relations indexed by roots def} for every two roots $(i,j), (i',j') \in C_0$.

A group $G$ will be called a  \textit{ pre-$(n,p)$-chamber group} (or a \textit{pre-chamber group} when $n,p$ are clear from the context) if it has a generating set
$$\lbrace x_{i,j} (r) : 1 \leq i < j \leq n+1,  r \in \mathbb{F}_p [t], \deg (r) \leq 3 \rbrace$$
and the following conditions are satisfied:
\begin{itemize}
\item For every $(i,j), (i',j), (i,j') \in C_0$,  the $\lbrace (i,j), (i,j') \rbrace$ relations and the $\lbrace (i,j), (i',j) \rbrace$ relations are fulfilled in $G$.
\item For every $(i,j), (i',j') \in \partial C_0$,  the $\lbrace (i,j), (i',j') \rbrace$ relations are fulfilled in $G$.
\item The map $G \rightarrow G_{n,p}$ induced by the natural map of the generating set is a surjective homomorphism.
\end{itemize}

\begin{remark}
We note that the relations stated above for $G$ need not be the defining relations of $G$ and,  in particular,  every chamber group is also a pre-chamber group.
\end{remark}

\begin{remark}
The last condition in the definition of a pre-chamber group is meant to ensure that $G$ does not have relations that do not occur in a chamber group.
\end{remark}

\begin{theorem}
\label{N_i thm}
Let $p$ be an odd prime,  $n \geq 2$ and $G$ a pre-$(n,p)$-chamber group.  There are groups $G=G^0, G^1,...,G^{k} = G_{n,p}$ (where $G_{n,p}$ is an $(n,p)$-chamber group) and $N_0 \triangleleft G^0,...,N_{k-1} \triangleleft G^{k-1}$ such that for every $0 \leq i \leq k-1$, $G^{i+1} =  N_{i} \backslash G^{i}$ and $N_i$ is generated by elements of order $p$.
\end{theorem}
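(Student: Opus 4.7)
The plan is to reach $G_{n,p}$ from $G$ by iteratively quotienting out normal subgroups $N_i$ generated by elements of order $p$, each step adding the missing Steinberg-type relations that distinguish the pre-chamber group $G$ from the chamber group $G_{n,p}$. The relations missing from $G$ (in the sense of Definition \ref{relations indexed by roots def}) are exactly the $\lbrace (i,j),(j,k) \rbrace$-relations for chains with length $\ell = k - i \geq 3$; those with $\ell = 2$ already hold in $G$ by the $\partial C_0$ assumption. We shall induct on $\ell$, processing all length-$\ell$ chains simultaneously at the $\ell$-th step, so that $G^{\ell-2}$ satisfies the Steinberg and invariance relations for all chains of length $\leq \ell$.

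A preliminary observation: every generator $x_{i,j}(r)$ of any pre-chamber group has order dividing $p$, since the $\lbrace (i,j) \rbrace$-relations (included in the $\partial C_0$-hypothesis) give $x_{i,j}(r)^p = x_{i,j}(pr) = x_{i,j}(0) = e$. At stage $\ell$, we take $N_{\ell - 3}$ to be the normal closure in $G^{\ell-3}$ of the elements
$$
w = [x_{i',j'}(r_1),\, x_{j',k'}(r_2)] \cdot x_{i',k'}(r_1 r_2)^{-1}, \quad w' = [x_{i',j'}(r_1),\, x_{j',k'}(r_2)] \cdot [x_{i',j'}(r_1'),\, x_{j',k'}(r_2')]^{-1},
$$
ranging over chains $i' < j' < k'$ with $k' - i' = \ell$ under the natural conditions on the polynomials, and set $G^{\ell-2} = N_{\ell-3} \backslash G^{\ell-3}$. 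The central claim is that each such $w$ (and $w'$) has order $p$ in $G^{\ell-3}$, so that $N_{\ell-3}$ is indeed generated by order-$p$ elements.

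The argument proceeds as follows. The generator $x_{i',k'}(r_1 r_2)$ commutes with both $x_{i',j'}(r_1)$ (shared first index) and $x_{j',k'}(r_2)$ (shared second index) by the $C_0$-commuting relations, hence it commutes with their commutator, so $w^p$ reduces to $[x_{i',j'}(r_1), x_{j',k'}(r_2)]^p$. Since $\ell \geq 3$, at least one of $k' - j'$, $j' - i'$ is $\geq 2$; assume $k' - j' \geq 2$ (the other case is symmetric). Put $A = x_{i',j'}(r_1)$, $B = x_{j',j'+1}(r_2)$, $C = x_{j'+1,k'}(1)$. By the Steinberg relation for the chain $(j', j'+1, k')$---available in $G^{\ell-3}$ since $k' - j' < \ell$---one has $x_{j',k'}(r_2) = [B, C]$, and by the shorter Steinberg for $(i', j', j'+1)$ the element $D := [A, B]$ equals $x_{i', j'+1}(r_1 r_2)$ in $G^{\ell - 3}$. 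A direct computation using the $C_0$-commuting identities $[A, C] = [A, D] = [B, D] = [D, x_{j',k'}(r_2)] = e$ together with the established shorter Steinbergs yields $[A, [A, [B, C]]] = e$ in $G^{\ell-3}$. Lemma \ref{[x,y]^p lemma} then gives $[A, [B,C]]^p = [A^p, [B,C]] = e$ since $A^p = e$ by the preliminary observation, and hence $w^p = e$. An analogous argument, using in addition the invariance relations already established for shorter chains, handles $w'$.

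After iterating $\ell = 3, 4, \ldots, n$, the resulting quotient $G^{n-2}$ satisfies every defining relation of $G_{n,p}$ from Corollary \ref{pres. of SLn roots coro}, so the surjection $G^{n-2} \twoheadrightarrow G_{n,p}$ inherited from $G \twoheadrightarrow G_{n,p}$ is an isomorphism. The main technical obstacle is verifying the iterated commutator identity $[A, [A, [B,C]]] = e$ in $G^{\ell - 3}$: this requires carefully tracking conjugations via the commuting and already-established shorter-chain Steinberg relations available at that stage, and constitutes the combinatorial heart of the proof inside the root system $A_n$.
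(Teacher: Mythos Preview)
Your chain-length induction is a natural idea, but there is a genuine gap in how you treat the commutation relations.  You assert that ``the relations missing from $G$ are exactly the $\{(i,j),(j,k)\}$-relations for chains with length $\ell\ge 3$,'' but this is not correct: the pre-chamber axioms only give you the commuting relations $[x_{i,j},x_{i',j'}]=e$ when the two roots share an index or when both lie in $\partial C_0$.  The commuting relations for roots with \emph{disjoint} index sets (e.g.\ $[x_{1,3},x_{2,4}]=e$ in $A_3$) are part of the chamber presentation but are \emph{not} pre-chamber axioms, and you never add them.  This omission resurfaces inside your order-$p$ computation: of the four identities you invoke, $[A,D]=e$ and $[B,D]=e$ are genuine shared-index relations, but $[A,C]=[x_{i',j'},x_{j'+1,k'}]=e$ and $[D,x_{j',k'}]=[x_{i',j'+1},x_{j',k'}]=e$ involve disjoint roots and are not ``$C_0$-commuting identities.''  The first of these can in fact be recovered in $G^{\ell-3}$ by a further induction (decompose one factor via a shorter Steinberg and use $\partial C_0$-commutation at the base), but you have not done this.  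The second is more problematic: writing $x_{j',k'}=[x_{j',j'+1},x_{j'+1,k'}]$ reduces it to $[x_{i',j'+1},x_{j'+1,k'}]$, which is precisely a Steinberg pair for the chain $(i',j'+1,k')$ of length $\ell$ --- the very relation you are trying to impose at this stage.  So this identity is not available in $G^{\ell-3}$, and the ``direct computation'' of $[A,[A,[B,C]]]=e$ does not go through as written.  For the same reason, the final claim that $G^{n-2}=G_{n,p}$ is unjustified: you have only forced the Steinberg relations, and you give no argument that the disjoint commuting relations follow.

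The paper avoids all of this by inducting on $n$ rather than on chain length.  One first passes to the two sub-$A_{n-1}$ systems on $\{1,\dots,n\}$ and $\{2,\dots,n+1\}$, where the inductive hypothesis yields \emph{full} chamber structure (in particular, all commuting relations).  After these two steps the subgroup generated by the simple-root elements is identified with $\Unip_{n+1}(\mathbb{F}_p[t];3)$ via Theorem~\ref{Unip pres thm}, and relations in $\Unip$ are imported wholesale to handle the pairs involving both extreme indices $1$ and $n+1$.  Only one further quotient (by the normal closure of $[x_{1,j}(r_1),x_{j,n+1}(r_2)]x_{1,n+1}(r_1r_2)^{-1}$) is then needed, and the order-$p$ verification there uses relations that are already in place.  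Your approach could likely be repaired by interleaving a proof of the disjoint commuting relations into the chain-length induction, but this is substantial extra work that you have not recognised as necessary.
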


\begin{proof}

The proof is by induction on $n$.  We note that for $n=2$,  the relations in the definition of a  pre-$(2,p)$-chamber group. group are exactly the relations in the definition of a $(2,p)$-chamber group and thus every  pre-$(2,p)$-chamber group. group is a $(2,p)$-chamber group.

 Let $n \geq 3$ and assume that the Theorem holds for $n-1$.   Let $G$ be a  pre-$(n,p)$-chamber group.  Let $G' < G$ be the subgroup of $G$ generated by elements of the form $\lbrace x_{i,j} (r) : 1 \leq i <j \leq n,  r \in \mathbb{F}_p [t], \deg (r) \leq 3 \rbrace$.  We note that $G'$ is a pre-$(n,p)$-chamber group and thus by the induction assumption,  there are groups $G ' =(G^0) ', (G^1) ',...,(G^{k '}) ' = G_{n-1,p}$ and $N_0 ' \triangleleft (G^0) ',...,N_{k'-1} ' \triangleleft (G^{k'-1}) '$ such that for every $0 \leq i \leq k'-1$, $(G^{i+1}) ' =   N_{i} ' \backslash (G^{i}) '$ and $N_i '$ is generated by elements of order $p$.  Let $N_0,...,N_{k' -1}$ be the normal closures of $N_0 ',...,N_{k'-1} '$ in $G$.  Note that $N_0,...,N_{k'-1}$ are also generated by elements of order $p$ and define $G^0 = G,  G^{1} = N_0 \backslash G^0 ,...,G^{k'} = N_{k'-1} \backslash G^{k'-1}  $.  Note that $G^{k'}$ is  pre-$(n,p)$-chamber group and that in $G^{k'}$ all the $\lbrace (i,j),  (i',j') \rbrace$-relations holds for every $(i,j), (i',j') \in C_0$ with $1 \leq i < j \leq n$ and $1 \leq i ' < j ' \leq n$.

Similarly, let $G'' < G^{k'}$ be the subgroup generated by elements of the form $\lbrace x_{i,j} (r) : 2 \leq i <j \leq n+1,  r \in \mathbb{F}_p [t], \deg (r) \leq 3 \rbrace$.  We note again that $G''$ is a pre-$(n-1,p)$-chamber group and thus there are groups $G '' =(G^0) '', (G^1) '',...,(G^{k ''}) ' = G_{n-1,p}$ and $N_0 '' \triangleleft (G^0) '',...,N_{k''-1} '' \triangleleft (G^{k''-1}) '$ such that for every $0 \leq i \leq k''-1$, $(G^{i+1}) '' =N_{i} '' \backslash (G^{i}) '' $ and $N_i ''$ is generated by elements of order $p$.  Denote $N^{k'},...,N^{k' + k''-1}$ to be the normal closures of $N_0 '',...,N_{k''-1} ''$ in $G^{k'}$ (again, these are normal subgroups generated by elements of order $p$) and define $G^{k'+i} = N_{k'+i-1} \backslash G^{k' +i-1}  $ for every $1 \leq i \leq k''$.  Then $G^{k' + k''}$ is a  pre-$(n,p)$-chamber group in which all the $\lbrace (i,j),  (i',j') \rbrace$-relations holds for every $(i,j), (i',j') \in C_0$  are either $1 \leq i < j \leq n$ and $1 \leq i' < j' \leq n$ (from the previous step) or $2 \leq i < j \leq n+1$ and $2 \leq i' < j' \leq n+1$ (from this step).

Denote $H < G^{k' + k'}$ to be the subgroup generated by $\lbrace x_{i,i+1} (r) : 1 \leq i \leq n,  r \in \mathbb{F}_{p} [t],  \deg (r) \leq 3 \rbrace$.  Note that by Theorem \ref{Unip pres thm},  $H$ is isomorphic to $\Unip_{n+1} (\mathbb{F}_p [t] ; 3)$.  We will use $H$ to verify that  $G^{k' + k''}$ has almost all the relations of an $(n,p)$-chamber group: it is missing only $\lbrace (i,j),  (i',j') \rbrace$-relations where either $(1,n+1) \in \lbrace (i,j),  (i',j') \rbrace$ or $\lbrace (i,j),  (i',j') \rbrace = \lbrace (1,j), (j,n+1) \rbrace$ (and in this case, we know some of the relations, but not all of them).

As noted above,  in $ G^{k' + k''}$,  we already know all the $\lbrace (i,j),  (i',j') \rbrace$-relations holds for every $(i,j), (i',j') \in C_0$  where either $1 \leq i < j \leq n$ and $1 \leq i' < j' \leq n$  or $2 \leq i < j \leq n+1$ and $2 \leq i' < j' \leq n+1$.  Below, we will show that $\lbrace (i,j),  (i',j') \rbrace$-relations in the case $\lbrace (i,j),  (i',j') \rbrace = \lbrace (1,j),  (i, n+1) \rbrace$ where $1 \leq j \leq n$ and $2 \leq i \leq n+1$.

Let $(1,j),  (i,n+1) \in C_0$  and $r_1, r_2 \in \mathbb{F}_p [t]$ of degree $\leq 3$.   Recall the notation $[a_1,...,a_l] = [...[a_1,a_2],a_3],...,a_l]$.  Note that
$$x_{1,j} (r_1) = [x_{1,2} (r_1), x_{2,3} (1),..., x_{j-1,j} (1) ],$$
$$x_{i,n+1} (r_2) =  [x_{i,i+1} (r_2), x_{i+1,i+2} (1),..., x_{n, n+1} (1) ].$$
It follows that $x_{1,j} (r_1),  x_{i,n+1} (r_2)$ are elements in $H$ which is isomorphic to $\Unip_{n+1} (\mathbb{F}_p [t] ; 3)$.  Thus every relation that holds in $\Unip_{n+1} (\mathbb{F}_p [t] ; 3)$ between $e_{1,j} (r_1)$ and $e_{i,n} (r_2)$ holds in $H$ between $x_{1,j} (r_1)$ and $x_{i,n} (r_2)$.  This argument shows that if $j \neq i$,  then $\lbrace (1,j),  (i,n+1) \rbrace$-relations.  It also shows that
$$[x_{1,j} (r_1),  x_{j,n} (r_2)], x_{1,j} (r_1)]=e$$
(since this relation holds in $\Unip_{n+1} (\mathbb{F}_p [t] ; 3)$).

We will also show that in  $G^{k' + k''}$,  $\lbrace (1,j),  (j,n+1) \rbrace$-relations partially hold.  Namely,  the relations numbered $2.$ in Definition \ref{relations indexed by roots def} holds.  Let $r_1, r_2, r_1 ',  r_2 ' \in \mathbb{F}_p [t]$ of degree $\leq 3$,  such that $r_1 r_2 = r_1 ' r_2 '$.  We note that in $\Unip_{n+1} (\mathbb{F}_p [t] ; 3)$ it holds that
$$[e_{1,j} (r_1),  e_{j,n+1} (r_2)] =  [e_{1,j} (r_1 '),  e_{j,n+1} (r_2 ')].$$
Thus, by the same argument as above (the fact that $x_{1,j} (r_1),...,x_{j,n+1} (r_2 ')$ are all in $H$), it also holds that
$$[x_{1,j} (r_1),  x_{j,n+1} (r_2)] =  [x_{1,j} (r_1 '),  x_{j,n+1} (r_2 ')].$$

Last,  we define $N_{k' + k''}$ to be the normal subgroup of $G^{k' + k''}$ generated by all the elements of the form
$$[x_{1,j} (r_1),  x_{j,n+1} (r_2)] x_{1,n+1} (- r_1 r_2),$$
where $2 \leq j \leq n$ and $r_1, r_2 \in \mathbb{F}_p [t]$ with $\deg (r_1 r_2) \leq 3$.  We will show that in $G^{k' + k''}$ it holds that
$$([x_{1,j} (r_1),  x_{j,n+1} (r_2)] x_{1,n+1} (- r_1 r_2))^p = e$$
and thus $N_{k'+k''}$ is generated by elements of order $p$.
We note that by the definition of a pre-chamber group,  the $\lbrace (1,j),  (1,n+1) \rbrace$-relations and $\lbrace (j,n+1),  (1,n+1) \rbrace$-relations hold and thus $[x_{1,j} (r_1), x_{1,n+1} (- r_1 r_2)]=e$ and $[x_{j,n+1} (r_2), x_{1,n+1} (- r_1 r_2)] =e$.  Thus
\begin{align*}
([x_{1,j} (r_1),  x_{j,n+1} (r_2)] x_{1,n+1} (- r_1 r_2))^p =
[x_{1,j} (r_1),  x_{j,n+1} (r_2)]^p x_{1,n+1} (- r_1 r_2)^p =
[x_{1,j} (r_1),  x_{j,n+1} (r_2)]^p
\end{align*}
and we are left to show $[x_{1,j} (r_1),  x_{j,n+1} (r_2)]^p =e$.   We showed above that  $[x_{1,j} (r_1),  x_{j,n+1} (r_2)], x_{1,j} (r_1)]=e$.  It follows from Lemma \ref{[x,y]^p lemma} that
\begin{align*}
[x_{1,j} (r_1),  x_{j,n+1} (r_2)]^p = [x_{1,j} (r_1)^p,  x_{j,n+1} (r_2)] = [e,  x_{j,n+1} (r_2)] =e
\end{align*}
as needed.

Let $G^{k'+k''+1} =N_{k' + k''} \backslash G^{k' + k''}  $.  By the definition of the generators of $N_{k' + k''}$, it holds for every $2 \leq j \leq n$ and every $r_1, r_2 \in \mathbb{F}_p [t]$ with $\deg (r_1 r_2) \leq 3$ that
$$[x_{1,j} (r_1),  x_{j,n+1} (r_2)] =  x_{1,n+!} ( r_1 r_2)$$
and thus it follows that $\lbrace (1,j),  (j,n+1) \rbrace$-relations hold (recall that we showed above that these relations partly hold in $G^{k' + k''} $).

To finish the proof,  we need to verify the $\lbrace (i,j), (1,n+1) \rbrace$-relations,  i.e.,  to show that for every $r_1, r_2 \in \mathbb{F}_p [t]$ of degree $\leq 3$ it holds that $[x_{i,j} (r_1),  x_{1,n+1} (r_2) ]=e$.   We note that by the relations we already proved,  it follows that the set $\lbrace \lbrace x_{i,i+1} (r) : 1 \leq i \leq n,  r \in \mathbb{F}_p [t],  \deg (r) \leq 3 \rbrace$ generates $G^{k'+k''+1} $.  Thus it is enough to show that for every $1 \leq i \leq n$,  $[x_{i,i+1} (r_1),  x_{1,n+1} (r_2)] =e$.  Fix $1 \leq i \leq n$ and let $1 \leq j \leq n$ such that $j \neq i$ and $j \neq i+1$ (recall that $n \geq 3$ and thus there exists such $j$).  We already showed that: $[x_{i,i+1} (r_1),  x_{1,j} (1)] =e$ and $[x_{i,i+1} (r_1),  x_{j,n+1} (r_2)] =e$ in $G^{k'+k''+1}$ .   Thus
$$[x_{i,i+1} (r_1),  x_{1,n+1} (r_2) ] = [x_{i,i+1} (r_1),  [x_{1,j} (1), x_{j,n+1} (r_2)] ] = e,$$
as needed.
\end{proof}

Our motivation for proving the Theorem above is the following consequence:
\begin{theorem}
\label{making pre chamber into chamber and vanishing of H1 thm}
Let $n \geq 3$,  $p$ an odd prime and $\Gamma$ a group with subgroups $K_i, 0 \leq i \leq n$ and a subgroup $G < \Gamma$ that is a pre-$(n,p)$-chamber group.  Also let $\Lambda$ be a group that does not have elements of order $p$.  Denote $N$ to be the normal closure of $\Ker (G \rightarrow G_{n,p})$ in $\Gamma$ (where $G_{n,p}$ denotes a $(n,p)$-chamber group).  Assume that for $X = \mathcal{CC} (\Gamma,  \lbrace K_i \rbrace_{0 \leq i \leq n})$,  it holds that $H^1 (X,  \Lambda) =0$.  Then for $X ' = \mathcal{CC} (N \backslash \Gamma,  \lbrace N \backslash NK_i \rbrace_{0 \leq i \leq n})$ it holds that $H^1 (X',  \Lambda) =0$.
\end{theorem}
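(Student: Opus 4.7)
The strategy is to realize the passage from $X$ to $X'$ as a finite sequence of quotients, each of which satisfies the hypothesis of Corollary \ref{p coset complex H^1 coro}. The desired filtration is extracted from Theorem \ref{N_i thm} applied to the pre-chamber group $G$, and then lifted from $G$ to $\Gamma$ by taking normal closures in $\Gamma$.

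First, apply Theorem \ref{N_i thm} to obtain groups $G = G^0, G^1, \ldots, G^k = G_{n,p}$ together with normal subgroups $N_i \triangleleft G^i$ such that $G^{i+1} = N_i \backslash G^i$ and each $N_i$ is generated by elements of order $p$. Let $K^{(i)} = \Ker(G \to G^i)$, so $K^{(0)} = \lbrace e \rbrace$ and $K^{(k)} = \Ker(G \to G_{n,p})$. For each generator of $N_i$ (an element of order $p$ in $G^i$) pick a lift $g_\alpha^{(i)} \in G$; these lifts satisfy $(g_\alpha^{(i)})^p \in K^{(i)}$, and $K^{(i+1)}$ is generated, as a normal subgroup of $G$, by $K^{(i)} \cup \lbrace g_\alpha^{(i)} \rbrace_\alpha$.

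Second, lift the filtration to $\Gamma$. Define $M^{(i)} \triangleleft \Gamma$ to be the normal closure in $\Gamma$ of $K^{(i)}$. Then $M^{(0)} = \lbrace e \rbrace$ and $M^{(k)} = N$ by the definition of $N$. Set $\Gamma_i = M^{(i)} \backslash \Gamma$ and $X_i = \mathcal{CC}(\Gamma_i, \lbrace M^{(i)} \backslash M^{(i)} K_j \rbrace_{0 \leq j \leq n})$, so that $X_0 = X$ and $X_k = X'$. The group $\Gamma_{i+1}$ is canonically isomorphic to $\overline{N}_i \backslash \Gamma_i$, where $\overline{N}_i \triangleleft \Gamma_i$ is the normal closure in $\Gamma_i$ of the images of $\lbrace g_\alpha^{(i)} \rbrace_\alpha$. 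Since $(g_\alpha^{(i)})^p \in K^{(i)} \subseteq M^{(i)}$, each such image has order dividing $p$ in $\Gamma_i$, so after discarding trivial generators $\overline{N}_i$ is generated by elements of order exactly $p$.

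Finally, induct using Corollary \ref{p coset complex H^1 coro}. The base case $H^1(X_0, \Lambda) = H^1(X, \Lambda) = 0$ is the hypothesis. At each step the pair $(\Gamma_i, \overline{N}_i)$ meets the hypotheses of Corollary \ref{p coset complex H^1 coro} (using that $\Lambda$ has no non-trivial element of order $p$), so $H^1(X_{i+1}, \Lambda) = 0$ follows from $H^1(X_i, \Lambda) = 0$. After $k$ steps this yields $H^1(X', \Lambda) = 0$. The real work has already been done by Theorem \ref{N_i thm}; the main obstacle in the argument above is purely bookkeeping, namely ensuring that taking normal closures in $\Gamma$ is compatible with the quotient sequence for $G$, and that the $p$-th power condition on the lifts $g_\alpha^{(i)}$ transfers to the images in each successive quotient $\Gamma_i$.
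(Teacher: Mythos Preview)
Your proposal is correct and follows essentially the same route as the paper's proof: invoke Theorem \ref{N_i thm} to get a finite chain of quotients of $G$ by normal subgroups generated by order-$p$ elements, take normal closures in $\Gamma$ at each stage, and then apply Corollary \ref{p coset complex H^1 coro} iteratively. Your version is in fact more careful than the paper about the bookkeeping (tracking the lifts $g_\alpha^{(i)}$ and checking that their images in $\Gamma_i$ have order dividing $p$), but the underlying argument is the same.
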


\begin{proof}
Let $N_0,...,N_{k-1}$ be the subgroups of Theorem \ref{N_i thm}.  Denote $\Gamma^0 = \Gamma$ and $\Gamma^i = \langle\langle N_{i-1} \rangle\rangle \backslash \Gamma^i$, where $ \langle\langle N_{i-1} \rangle\rangle$ denotes the normal closure of $N_{i-1}$ in $\Gamma^i$.  Also denote $X_i = \mathcal{CC} (\Gamma_i,  \lbrace \langle\langle N_{i-1} \rangle\rangle \backslash (\langle\langle N_{i-1} \rangle\rangle K_i) \rbrace_{0 \leq i \leq n}$.  Then $\Gamma^k = N \backslash \Gamma$ and $X_k = X'$.

We note that since each $N_i$ is generated by elements of order $p$, it follows that $ \langle\langle N_{i} \rangle\rangle$ is generated by elements of order $p$.  Applying Corollary \ref{p coset complex H^1 coro} iteratively yields that for every $i$,  $H^1 (X_i,  \Lambda) = 0$ and in particular $H^1 (X',  \Lambda) =0$.
\end{proof}

\section{Propagation of properties in the $A_n$ root system}
\label{Propagation of property in the A_n root system sec}
In this section, we describe a combinatorial result regarding a propagation argument in the $A_n$ root system.

Below,  given a pair of roots $(i,j),  (i',j')$ and a set of Weyl chambers $\mathcal{C}$,  we will denote $\lbrace (i,j),  (i',j') \rbrace < \mathcal{C}$,  if there is a chamber $C \in \mathcal{C}$ such that $\lbrace (i,j),  (i',j') \rbrace \subseteq C$.

\begin{theorem}
\label{propagation thm}
Let $n \geq 3$ and $\gamma_0 \in  \Sym \lbrace 1,..., n+1 \rbrace$ be the cyclic permutation $\gamma_0 = (1 ...  n+1)$.  We define sets $\mathcal{C}_k$ of Weyl chambers inductively as follows:
\begin{itemize}
\item $$\mathcal{C}_0 = \lbrace C_{\gamma_0^i} : 0 \leq i \leq n \rbrace.$$
\item For $k>0$,  $C_\gamma \in \mathcal{C}_k$ if the following conditions holds:
\begin{enumerate}
\item For any $(i,j),  (i',j') \in \partial C_\gamma$,  $\lbrace (i,j),  (i',j') \rbrace < \mathcal{C}_{k-1}$.
\item For every $(i,j),  (i,j'),  (i',j) \in C_\gamma$,  $\lbrace (i,j),  (i,j') \rbrace < \mathcal{C}_{k-1}$ and $\lbrace (i,j),  (i',j) \rbrace < \mathcal{C}_{k-1}$.
\end{enumerate}
\end{itemize}
Then for every two non-opposite roots $(i,j),  (i',j')$,  $\lbrace (i,j),  (i',j') \rbrace < \mathcal{C}_2$.
\end{theorem}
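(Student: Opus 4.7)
My plan is to first reduce the inductive definition of $\mathcal{C}_k$. Observe that for any two roots $(i,j),(i,j') \in C_\gamma$, the cyclic-shift chamber $C_{\gamma_0^{i-1}} \in \mathcal{C}_0$ orders the indices as $i, i+1,\ldots, n+1, 1, \ldots, i-1$ and thus contains both roots, since $i$ is placed first; dually, $C_{\gamma_0^{j}}$ places $j$ last and contains $(i,j),(i',j)$.  Hence each pair appearing in condition~(2) of the definition of $\mathcal{C}_k$ already lies in $\mathcal{C}_0$, so condition~(2) is automatic for every Weyl chamber.  Consequently, for $k = 1, 2$, $C_\gamma \in \mathcal{C}_k$ reduces to condition~(1) alone — every pair of simple roots in $\partial C_\gamma$ lies in some chamber of $\mathcal{C}_{k-1}$ — and in particular one obtains the chain $\mathcal{C}_0 \subseteq \mathcal{C}_1 \subseteq \mathcal{C}_2$.

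Second, I adopt a geometric viewpoint in which a chamber $C_\gamma$ is a linearization of the oriented cycle $1 \to 2 \to \cdots \to n+1 \to 1$ obtained by choosing a ``cut'' edge: a root $(a,b)$ is then positive in $C_\gamma$ iff its forward arc $a \to b$ on the cycle does not contain the cut edge.  In particular, a pair $\{(a,b),(c,d)\}$ lies in some element of $\mathcal{C}_0$ iff the union of the two forward arcs leaves at least one edge of the cycle uncovered (any uncovered edge serving as a valid cut).  With this criterion in hand I split the given pair $(i,j),(i',j')$ into three cases: (A) common source $i = i'$ or common target $j = j'$, already settled by the opening reduction; (B) the shared vertex is a target of one and a source of the other, producing a pair $(i,j),(j,k)$ with pairwise distinct $i,j,k$; and (C) all four indices are distinct.

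For Cases (B) and (C) I will construct an explicit chamber $C_\gamma \in \mathcal{C}_2$ containing the given pair.  The key idea is to choose $\gamma$ so that the marked vertices of the pair sit at controlled positions that ensure positivity of both roots, while the remaining indices are arranged in the standard cyclic order.  This makes the boundary $\partial C_\gamma$ consist mostly of short simple roots (step length one in the standard cycle), with only a bounded number of ``long'' simple roots appearing adjacent to the marked vertices.  Checking $C_\gamma \in \mathcal{C}_2$ then reduces to verifying that every pair of simple roots of $\partial C_\gamma$ lies in $\mathcal{C}_1$: short–short pairs occupy only a small portion of the cycle and lie in $\mathcal{C}_0$ directly by the arc-covering criterion, while each pair involving one of the long simple roots is placed in $\mathcal{C}_1$ via the same recipe applied one level down.

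The hard part will be this recursive verification: one must confirm that each one-level-down auxiliary chamber really lies in $\mathcal{C}_1$, meaning every pair of its own simple roots lies in $\mathcal{C}_0$.  This requires a careful case analysis on the cyclic positions of the (at most four) marked vertices, showing that the single long simple root produced at the second level still pairs with every other simple root so as to leave some cycle edge uncovered.  The fact that exactly two nested levels of the construction always suffice — precisely matching the ``$\mathcal{C}_2$'' in the theorem's conclusion — is the combinatorial crux of the argument, and the hypothesis $n \geq 3$ enters precisely to guarantee that the cycle is long enough to accommodate the short auxiliary arcs without overlap.
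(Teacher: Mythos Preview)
Your plan is correct and matches the paper's approach closely: both note that condition~(2) is automatic (the paper's lemma that any pair $(i,j),(i,j')$ or $(i,j),(i',j)$ already lies in some $C_{\gamma_0^{m}}$ is exactly your opening reduction), and both then verify condition~(1) by producing chambers whose boundary has only a couple of non-consecutive simple roots and checking those pairs recursively.  The paper carries this out with the explicit two-parameter family $C_{\gamma_0^{t}\gamma_1^{l}}$ where $\gamma_1=(n\ n{-}1\ \cdots\ 1)$ --- computing $\partial C_{\gamma_1^{l}}$, showing $C_{\gamma_1^{l}}\in\mathcal{C}_1$ for $1\le l\le n-2$, bootstrapping to $C_{\gamma_1^{n-1}}\in\mathcal{C}_2$, and then placing every non-opposite pair inside some $C_{\gamma_0^{t}\gamma_1^{l}}$ --- which is precisely your ``marked vertices at controlled positions, rest in cyclic order'' recipe made concrete; your arc-covering test for $\mathcal{C}_0$-membership is a clean equivalent of what the paper proves case by case.
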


\begin{remark}
The reader should think about this Theorem as the following propagation result: Let $(P)$ be a property of Weyl chambers that is checked on pairs of roots,  i.e., a chamber $C_{\gamma}$ has the property $(P)$ if every pair of roots $(i,j),  (i' ,j ') \in  C_{\gamma}$ has the property $(P)$.  Start with the set $\mathcal{C}_0$ of Weyl chambers defined above that has the property $(P)$ and assume that $(P)$ is inherited from a subset of pairs of roots,  i.e.,  if a ``generating'' subset of pairs of roots in $C_{\gamma}$ has the property $(P)$ at the $k-1$ step,  then  $C_{\gamma}$ has  the property $(P)$ at step $k$ (where the ``generating'' subset of pairs is the set defined in the Theorem).  The Theorem above states that (with $\mathcal{C}_0$ as above) after $2$ steps,  all pairs of non-opposite roots have the property $(P)$.
\end{remark}

In order to prove this Theorem,  we will need few Lemmas:
\begin{lemma}
\label{(i,j), (i,j') lemma}
For any roots of the form $(i,j), (i,j'),  (i',j)$,  $\lbrace (i,j),  (i,j') \rbrace < \mathcal{C}_0$ and $\lbrace (i,j),  (i',j) \rbrace < \mathcal{C}_0$.
\end{lemma}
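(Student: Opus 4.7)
The plan is to unpack the definition of the chambers $C_{\gamma_0^m}$ concretely as cyclic shifts of the positive Weyl chamber, and then exhibit, for each of the two required pairs, a specific shift that places both roots in a single such chamber.

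First, I would observe that $\gamma_0^m(k) \equiv k + m \pmod{n+1}$ (with representatives in $\lbrace 1,\ldots,n+1 \rbrace$), so that a root $(a,b)$ lies in $C_{\gamma_0^m}$ if and only if the representatives of $a-m$ and $b-m$ modulo $n+1$ (in $\lbrace 1,\ldots,n+1\rbrace$) satisfy $a-m<b-m$. Informally, thinking of $\lbrace 1,\ldots,n+1 \rbrace$ arranged in a cycle and ``cutting'' the cycle between positions $m$ and $m+1$, the chamber $C_{\gamma_0^m}$ consists of those roots $(a,b)$ for which $a$ precedes $b$ in the resulting linear order.

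For the pair $(i,j),(i,j')$ sharing a common first coordinate, I would simply choose $m = i - 1 \in \lbrace 0,\ldots,n \rbrace$. Then $i-m\equiv 1\pmod{n+1}$, while $j-m$ and $j'-m$ lie in $\lbrace 2,\ldots,n+1\rbrace$ since $j,j'\neq i$. Hence $1 = i-m < j-m$ and $1 = i - m < j' - m$, so $(i,j),(i,j') \in C_{\gamma_0^{i-1}} \in \mathcal{C}_0$. Symmetrically, for the pair $(i,j),(i',j)$ sharing a common second coordinate, I would choose the unique $m \in \lbrace 0,\ldots,n \rbrace$ with $m \equiv j \pmod{n+1}$ (namely $m = j$ if $j \leq n$ and $m = 0$ if $j = n+1$). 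Then $j - m \equiv 0 \pmod{n+1}$, whose representative in $\lbrace 1,\ldots,n+1 \rbrace$ is $n+1$, while $i-m$ and $i'-m$ have representatives in $\lbrace 1,\ldots,n \rbrace$ since $i,i'\neq j$. Thus $(i,j),(i',j) \in C_{\gamma_0^m} \in \mathcal{C}_0$.

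There is no real obstacle here; the only thing to be careful about is the modular arithmetic convention (working with representatives in $\lbrace 1,\ldots,n+1 \rbrace$ rather than $\lbrace 0,\ldots,n\rbrace$), since an off-by-one mistake could make one of the two cyclic shifts appear to fall outside the range $\lbrace 0,\ldots,n\rbrace$ of allowed exponents of $\gamma_0$.
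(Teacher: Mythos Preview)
Your proposal is correct and takes essentially the same approach as the paper: for the pair sharing first coordinate $i$ you place both roots in $C_{\gamma_0^{i-1}}$, and for the pair sharing second coordinate $j$ you place both in $C_{\gamma_0^{j \bmod (n+1)}}$, exactly as the paper does (the paper writes $C_{\gamma_0^{j}}$, tacitly using $\gamma_0^{n+1}=e$). Your phrasing in terms of cutting the cycle is just a reformulation of the paper's direct application of $\gamma_0^{1-i}$ and $\gamma_0^{n+1-j}$ to send the common coordinate to $1$, respectively $n+1$.
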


\begin{proof}
Let $(i,j), (i,j'),  (i',j)$ be roots.  Note that
$$\gamma_0^{1-i}. (i,j) = (1,  \gamma_0^{1-i}. j) \in C_0,$$
$$\gamma_0^{1-i}. (i,j') = (1,  \gamma_0^{1-i}. j') \in C_0.$$
It follows that $(i,j),  (i,j') \in C_{\gamma_0^{i-1}}$ as needed.

Similarly,
$$\gamma_0^{n+1-j}. (i,j) = (\gamma_0^{n+1-j}. i,  n+1) \in C_0,$$
$$\gamma_0^{n+1-j}. (i',j) = (\gamma_0^{n+1-j}. i' ,  n+1) \in C_0.$$
It follows that $(i,j),  (i,j') \in C_{\gamma_0^{j}}$ as needed.
\end{proof}

\begin{lemma}
\label{(i,i+1), (i',j') lemma}
For every $1 \leq i \leq n$ and every root $(i',j') \neq (i+1,i)$,  $ \lbrace (i',j'), (i,i+1) \rbrace < \mathcal{C}_0$.
\end{lemma}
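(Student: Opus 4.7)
The plan is to prove this lemma by a direct combinatorial analysis of which cyclically-shifted chambers $C_{\gamma_0^m}$ contain each of the two roots, and then checking that at least one chamber contains both.

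First I would pin down the structure of $C_{\gamma_0^m}$. Since $\gamma_0^m$ acts by $k \mapsto k+m \pmod{n+1}$ (with representatives in $\lbrace 1,\dots,n+1\rbrace$), a root $(a,b)$ lies in $C_{\gamma_0^m}$ iff $\gamma_0^{-m}(a) < \gamma_0^{-m}(b)$ in $\lbrace 1,\dots,n+1\rbrace$. Geometrically, starting the linear order at position $m+1$ and going cyclically, the root $(a,b)$ belongs to $C_{\gamma_0^m}$ precisely when $a$ is encountered before $b$. In particular, since $i$ and $i+1$ are cyclically consecutive, the root $(i,i+1)$ lies in $C_{\gamma_0^m}$ for \emph{every} $m \in \lbrace 0,1,\dots,n\rbrace$ except $m=i$ (in that one exceptional case $i$ occupies position $n+1$ and $i+1$ occupies position $1$, so one instead gets the opposite root $(i+1,i)$).

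Next I would count the chambers of the form $C_{\gamma_0^m}$ containing an arbitrary root $(i',j')$. Setting $d = (i' - j') \bmod (n+1) \in \lbrace 1,\dots,n\rbrace$, the characterization above says that $(i',j') \in C_{\gamma_0^m}$ for exactly $d$ consecutive values of $m$, namely those $m$ with $m+1$ in the cyclic arc running from $j'+1$ through $i'$.

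Finally I would combine the two counts. To show $\lbrace (i,i+1),(i',j')\rbrace < \mathcal{C}_0$ I must exhibit some $m \in \lbrace 0,\dots,n\rbrace$ with $m \neq i$ such that $(i',j')\in C_{\gamma_0^m}$. Since there are $d$ admissible values of $m$ for $(i',j')$, if $d \geq 2$ then at least one of them differs from $i$, finishing the argument. The only potentially delicate case is $d=1$: here the unique admissible value is $m = i' - 1 \pmod{n+1}$, and this equals $i$ precisely when $i' = i+1$; combined with $j' = i' - 1 \pmod{n+1} = i$, this forces $(i',j') = (i+1,i)$, the hypothesis we explicitly excluded. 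So in every remaining case an appropriate chamber exists. I expect no serious obstacle — the argument is a straightforward case analysis once the counting of $d$ is in place, and the hypothesis $(i',j') \neq (i+1,i)$ is manifestly the minimal exclusion one needs.
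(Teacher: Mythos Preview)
Your proof is correct and follows essentially the same approach as the paper: both hinge on the observation that $(i,i+1)$ lies in every chamber $C_{\gamma_0^m}$ except the single one with $m=i$, and then locate $(i',j')$ in at least one of the remaining chambers. The only stylistic difference is that the paper exhibits two explicit shifts (sending $i'$ to position $1$, respectively $j'$ to position $n+1$) and checks that not both coincide with the forbidden value, whereas you count the full set of $d=(i'-j')\bmod(n+1)$ admissible shifts and use pigeonhole; your counting version is slightly cleaner in that it avoids the paper's preliminary case split on $i'=1$ or $j'=n+1$.
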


\begin{proof}
Let $(i',j')$ be a root and $1 \leq i \leq n$.  If $i' =1$ or $j' =n+1$,  then $(i',j'),  (i+1) \in C_0$ and we are done.  Assume that $2 \leq i' \leq n+1$ and $1 \leq j ' \leq n$.

We note that for every $0 \leq l \leq n$,  if $l \neq n+1-i$ it holds that $\gamma_0^l.  (i,i+1) \in C_0$.  We also note that for every root $(i',j')$ it holds that
$$\gamma_0^{n+2-i'}.(i', j') = (1, \gamma_0^{n+2-i'}.j') \in C_0$$
and
$$\gamma_0^{n+1-j'}.(i', j') = (\gamma_0^{n+1-j'}. i', n+1) \in C_0.$$
By the assumption that $(i',j') \neq (i+1,i)$,  then  either $n+2-i' \neq n+1-i$ or $n+1-j' \neq  n+1-i$ (or both).  It follows that  there is $0 \leq l \leq n$ such that $\gamma_0^l. (i,i+1),  \gamma_0^l. (i',j') \in C_0$ and thus $(i,i+1),  (i',j') \in C_{\gamma_0^{-l}}$ as needed.
\end{proof}

\begin{lemma}
\label{boundary of gamma1 lemma}
Let $\gamma_1$ be the cyclic permutation $\gamma_1 = \begin{pmatrix}
n & n-1 & ... &1
\end{pmatrix} $.  For every $1 \leq j \leq n-1$,
$$\partial C_{\gamma_1^l} = \lbrace (i,  i+1) : 1 \leq i \leq n-1,  i \neq n - l \rbrace \cup \lbrace (n,1) \rbrace \cup \lbrace (n-l,  n+1) \rbrace.$$
\end{lemma}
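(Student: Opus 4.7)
The plan is to prove the lemma by directly computing the action of $\gamma_1^l$ on $\{1,\dots,n+1\}$ and then unpacking the definition $\partial C_{\gamma_1^l} = \{(\gamma_1^l(i), \gamma_1^l(i+1)) : 1 \leq i \leq n\}$.

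First I would record the explicit action of $\gamma_1$: from the cycle notation $\gamma_1 = (n\; n-1\; \cdots\; 1)$, we have $\gamma_1(i) = i-1$ for $2 \leq i \leq n$, $\gamma_1(1) = n$, and $\gamma_1(n+1) = n+1$. Iterating, for $1 \leq i \leq n$ and $1 \leq l \leq n-1$,
\[
\gamma_1^l(i) = \begin{cases} i - l & \text{if } l < i \leq n, \\ i - l + n & \text{if } 1 \leq i \leq l, \end{cases}
\]
while $\gamma_1^l(n+1) = n+1$. (Note: I assume the condition in the statement should read $1 \leq l \leq n-1$.)

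Next, I would compute $(\gamma_1^l(i), \gamma_1^l(i+1))$ as $i$ runs over $1,\dots,n$, splitting into four cases. For $i = n$ the pair is $(\gamma_1^l(n), \gamma_1^l(n+1)) = (n-l, n+1)$, accounting for the rightmost element of the claimed set. For $i = l$, which is the ``wraparound'' index, we get $(\gamma_1^l(l), \gamma_1^l(l+1)) = (n, 1)$, accounting for the middle element. For $1 \leq i < l$ both $i$ and $i+1$ are at most $l$, so the pair is $(i-l+n,\, i+1-l+n) = (k, k+1)$ with $k = i-l+n \in \{n-l+1, \dots, n-1\}$. For $l < i \leq n-1$ both $i$ and $i+1$ are strictly greater than $l$, so the pair is $(i-l,\, i+1-l) = (k, k+1)$ with $k = i-l \in \{1,\dots, n-l-1\}$.

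Collecting the pairs of the form $(k, k+1)$ from the last two cases gives exactly $\{(k, k+1) : 1 \leq k \leq n-1,\; k \neq n-l\}$, and combining with the two special pairs $(n,1)$ and $(n-l, n+1)$ yields the set in the statement. There is no substantive obstacle; the proof is a finite case analysis and the main thing to be careful about is the wraparound index $i = l$ and verifying that the boundary case $i = n$ (involving the fixed point $n+1$) produces the correct ``long'' simple root $(n-l, n+1)$.
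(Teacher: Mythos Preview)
Your proof is correct and follows essentially the same approach as the paper: both compute $\gamma_1^l(i)$ explicitly on $\{1,\dots,n+1\}$ (using that $n+1$ is fixed) and then split the computation of $(\gamma_1^l(i),\gamma_1^l(i+1))$ into the same four cases $i=l$, $i=n$, $l<i<n$, and $1\le i<l$. Your observation that the statement's quantifier should read $1\le l\le n-1$ is also in line with the paper's proof.
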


\begin{proof}
Fix $1 \leq l \leq n-1$.  We will calculate the boundary by considering all the cases of $\gamma_1^l. (i,i+1)$ where $1 \leq i \leq n$:
\begin{enumerate}
\item For $i=l$,  $\gamma_1^l.  (l,l+1) = (n,1)$.
\item For $i = n$,  $\gamma_1^l.  (n,n+1) = (n-l,n+1)$.
\item For $l < i <n$,  $\gamma_1^l.  (i,i+1) = (i-l, i+1-l)$ and thus
$$\gamma_1^l.  \lbrace (i,i+1) : l < i <n \rbrace = \lbrace (i,i+1) : 1 \leq i <n-l \rbrace.$$
\item For $1 \leq i< j$,  $\gamma_1^l.  (i,i+1) = (n-i-j,  n+1-i-j)$ and thus
$$\gamma_1^l.  \lbrace (i,i+1) : 1 \leq i< l \rbrace = \lbrace (i,i+1) : n-l < i \leq n-1 \rbrace.$$
\end{enumerate}
\end{proof}

\begin{lemma}
\label{(i,j),  (i',j') lemma}
Let $\gamma_0 = (1 ... n+1)$ and $\gamma_1 = \begin{pmatrix} n & n-1 & ... &1 \end{pmatrix} $.  For every two non-opposite roots $(i,j),  (i',j')$,  there are $0 \leq t \leq n$ and $0 \leq l \leq n-1$ such that $(i,j),  (i',j') \in C_{\gamma_0^{t} \gamma_1^l}$.
\end{lemma}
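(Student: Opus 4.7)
The plan is to give a direct combinatorial description of the ordering $(\gamma(1), \ldots, \gamma(n+1))$ attached to $\gamma = \gamma_0^{t} \gamma_1^{l}$, and then to read off an explicit $(t,l)$ from the data of the pair $(i,j), (i',j')$. First I would establish the following parameterization. Since $\gamma_1$ fixes $n+1$ and acts on $\{1, \ldots, n\}$ as a cyclic decrement, while $\gamma_0^{t}$ shifts every entry by $t$ modulo $n+1$, a short direct computation shows that $\gamma(n+1)$ equals $t$ when $t \geq 1$ and equals $n+1$ when $t=0$, while the first $n$ entries $(\gamma(1), \ldots, \gamma(n))$ enumerate $\{1, \ldots, n+1\} \setminus \{\gamma(n+1)\}$ in the cyclic order inherited from $1 < 2 < \cdots < n+1$ with $\gamma(n+1)$ removed, starting from $\gamma(1)$. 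The map $(t,l) \mapsto (k,s) := (\gamma(n+1), \gamma(1))$ is then a bijection from $\{0, \ldots, n\} \times \{0, \ldots, n-1\}$ onto the set of pairs $(k,s)$ with $k \in \{1,\ldots,n+1\}$ and $s \in \{1,\ldots,n+1\}\setminus\{k\}$, so every such $(k,s)$ is realized by a unique $(t,l)$.

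With this parameterization in hand, the theorem reduces to a choice of $(k,s)$ depending on whether $j = i'$. If $j \neq i'$, I would take $k = j$ and $s = i'$: in the resulting ordering $j$ is last and $i'$ is first, so $(i,j) \in C_\gamma$ because $i \neq j$ places $i$ in one of positions $1, \ldots, n$ (strictly before $j$), and $(i',j') \in C_\gamma$ because $j' \neq i'$ places $j'$ strictly after position $1$. If instead $j = i'$, then non-oppositeness of the pair forces $j' \neq i$, and I would take $k = j'$ and $s = i$: now $(i,j) \in C_\gamma$ because $j \neq i$ and $j \neq j'$ (the latter since $(i',j') = (j,j)$ is not a root), so $j$ lies strictly between positions $1$ and $n+1$; and $(i',j') = (j, j') \in C_\gamma$ because $j$ precedes $j' = k$.

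The step requiring most care will be verifying the bijection $(t,l) \leftrightarrow (k,s)$ cleanly, which in turn hinges on the observation that $\gamma_1^{l}(1)$ traverses $\{1, n, n-1, \ldots, 2\}$ as $l$ runs over $\{0, 1, \ldots, n-1\}$, so $\gamma(1) \equiv \gamma_1^{l}(1) + t \pmod{n+1}$ realizes each value in $\{1, \ldots, n+1\} \setminus \{k\}$ exactly once (where $k$ is determined by $t$ as above). Once this parameterization is in place, the two-case split above finishes the proof immediately.
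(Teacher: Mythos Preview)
Your argument is correct and is essentially the same as the paper's proof: both choose $t,l$ so that, in the ordering attached to $\gamma_0^t\gamma_1^l$, one of $j,j'$ sits at the last position and the non-conflicting one of $i,i'$ sits at the first position. The paper does this by directly applying $\gamma_1^{i''-1}\gamma_0^{-j}$ (respectively $\gamma_1^{i''-1}\gamma_0^{-j'}$) to push the pair into $C_0$, whereas you first set up the explicit bijection $(t,l)\leftrightarrow(\gamma(n+1),\gamma(1))$ and then read off $(k,s)=(j,i')$ or $(j',i)$; the two case splits and the resulting $(t,l)$ coincide.
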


\begin{proof}
Assume first that $i' \neq j$.  Denote $i'' = \gamma_0^{-j}.i'$ and note that $i'' \neq n+1$.  Then
$$\gamma_1^{i''-1} \gamma_0^{-j}.(i,j)= (\gamma_1^{i''} \gamma_0^{-j}.i, n+1) \in C_0$$
and
$$\gamma_1^{i''-1} \gamma_0^{-j}.(i',j')= (1,  \gamma_1^{i''-1} \gamma_0^{-j}. j') \in C_0.$$
Thus $(i,j),  (i',j') \in C_{\gamma_0^{-j} \gamma_1^{i''-1}}$.

If $i' = j$, then from the assumption that the roots are non-opposite it follows that $j \neq i'$.  Denote $i'' = \gamma_0^{-j'}.i$ and note that $i'' \neq n+1$.  Then
$$\gamma_1^{i''-1} \gamma_0^{-j'}.(i',j')= (\gamma_1^{i''} \gamma_0^{-j'}.i ' , n+1) \in C_0$$
and
$$\gamma_1^{i''-1} \gamma_0^{-j'}.(i,j)= (1,  \gamma_1^{i''-1} \gamma_0^{-j'}. j) \in C_0.$$
Thus $(i,j),  (i',j') \in C_{\gamma_0^{-j} \gamma_1^{i''-1}}$.
\end{proof}

\begin{proof}[Proof of Theorem \ref{propagation thm}]
In light of Lemma \ref{(i,j), (i,j') lemma},  in order to show that $C_\gamma \in \mathcal{C}_k$,  it is enough to show that for every two roots $(i,j),  (i',j') \in \partial C_{\gamma}$,  there is $C_{\gamma '} \in \mathcal{C}_{k-1}$ such that $\lbrace (i,j),  (i',j')  \rbrace \subseteq C_{\gamma '}$.

By induction on $k$, we will show that every $k$ and every $0 \leq t \leq n$, if $C_{\gamma} \in \mathcal{C}_k$,  then $C_{\gamma_0^t \gamma} \in \mathcal{C}_k$ or in other words,  $\mathcal{C}_k$ is invariant under the action of $\gamma_0$.   For $k=0$,  this is clear.  Assume that this holds for $\mathcal{C}_{k-1}$ and let $C_{\gamma} \in \mathcal{C}_k$.  Let $0 \leq t \leq n$.  Note that roots in $\partial C_{\gamma_0^t \gamma}$ are of the form $\gamma_0^t. (i,j)$,  where $(i,j) \in \partial C_{\gamma}$.
For any $(i,j),  (i',j') \in \partial C_{\gamma}$,  there is $C_{\gamma '} \in \mathcal{C}_{k-1}$ such that $\lbrace (i,j),  (i',j')  \rbrace \subseteq C_{\gamma '}$. Thus
\begin{align*}
\lbrace \gamma_0^t.(i,j),  \gamma_0^t.  (i',j')  \rbrace \subseteq \gamma_0^t.  C_{\gamma'} =  C_{\gamma_0^t \gamma'} \in \mathcal{C}_{k-1}.
\end{align*}
It follows that $C_{\gamma_0^t \gamma} \in \mathcal{C}_k$ as needed.

We denote $\gamma_1 = \begin{pmatrix}
n & n-1 & ... &1 \end{pmatrix} $ as above.  We claim that for every $1 \leq l \leq n-2$,  $C_{\gamma_1^l} \in \mathcal{C}_1$ (recall that $n \geq 3$ and thus there is some $1 \leq j \leq n-2$).  By Lemma \ref{boundary of gamma1 lemma},
$$\partial C_{\gamma_1^l} = \lbrace (i,  i+1) : 1 \leq i \leq n-1,  i \neq n - l \rbrace \cup \lbrace (n,1) \rbrace \cup \lbrace (n-l,  n+1) \rbrace.$$

By Lemma \ref{(i,i+1), (i',j') lemma},  for every pair root of the form $\lbrace (i,i+1), (i',j') \rbrace \subseteq \partial C_{\gamma_1^l}$,  it holds that $ \lbrace (i',j'), (i,i+1) \rbrace < \mathcal{C}_0$..  Thus, in order to show that $C_{\gamma_1^l} \in \mathcal{C}_1$,  we are left to show that $(n,1),  (n-l,n+1) < \mathcal{C}_0$.

We note that $1 \leq l \leq n-2$ and thus $n-l \geq 2$.  It follows that $\gamma_0^{-1}. (n-l,  n+1) = (n-l-1,n) \in C_0$ and $\gamma_0^{-1}. (n,1) = (n-2,n+1) \in C_0$.  Thus $(n-l,  n+1),  (n,1) \in C_{\gamma_0}$.  It follows that for every $0 \leq i \leq n$ and every $0 \leq l \leq n-2$,  $C_{\gamma_0^i \gamma_1^l} \in \mathcal{C}_1$.

Next,  we will show that $C_{\gamma_1^{n-1}} \in \mathcal{C}_2$.  As above,  by Lemmas \ref{(i,i+1), (i',j') lemma},  \ref{boundary of gamma1 lemma},    it is enough to show that there is $C_{\gamma '} \in \mathcal{C}_1$ such that $(n,1), (1,n+1) \in C_{\gamma '}$.

Note that $\gamma_1.(1,2) = (n,1)$ and that $\gamma_1. (2,n+1) = (1,n+1)$ and thus $(n,1),  (1,n+1) \in C_{\gamma_1} \in \mathcal{C}_1$ as needed.

Last,  we showed above that for every $0 \leq l \leq n-1$ and every $0 \leq t \leq n$,  $C_{\gamma_1^l \gamma_0^t} \in \mathcal{C}_2$.  By Lemma \ref{(i,j),  (i',j') lemma},  it follows that for every two non-opposite roots $(i,j),  (i',j')$,  it holds that $\lbrace (i,j), (i',j') \rbrace < \mathcal{C}_2$.
\end{proof}

\section{Vanishing of the first cohomology for Kaufman-Oppenheim complexes}

In this section, we will prove the main results of this paper.

Let $n \geq 3$ and $p$ an odd prime.  Keeping the notations of the previous Section,  we denote $\gamma_0$ to be the cyclic permutation $\gamma_0 = (1 ... n+1)$,  and $\mathcal{C}_0 = \lbrace C_{\gamma_0^l} : 0 \leq l \leq n \rbrace$.  Also,  for a pair of roots $(i,j),  (i',j')$ and a set of Weyl chambers $\mathcal{C}$,  we will denote $\lbrace (i,j),  (i',j') \rbrace < \mathcal{C}$,  if there is a chamber $C \in \mathcal{C}$ such that $\lbrace (i,j),  (i',j') \rbrace \subseteq C$.

We define the group $\widetilde{\Gamma}_{n,p}$ to be the following abstract group: The generating set of $\widetilde{\Gamma}_{n,p}$ is
$$\lbrace x_{i,j} (r) : 1 \leq i,j \leq n+1, i \neq j,  r \in \mathbb{F}_p [t],  \deg (r) \leq 3 \rbrace$$
and its relations are the $\lbrace (i,j),  (i',j') \rbrace$-relations of Definition \ref{relations indexed by roots def} for every $\lbrace (i,j),  (i',j') \rbrace < \mathcal{C}_0$.

For every $0 \leq i \leq n$,  we define the subgroup $\widetilde{K}_i < \widetilde{\Gamma}_{n,p}$ as
$$\widetilde{K}_i = \langle x_{i,j} (r) : (i,j) \in C_{\gamma_0^i},  r \in \mathbb{F}_p [t], \deg (r) \leq 3 \rangle.$$
We note that the natural surjective homomorphism $\widetilde{\Gamma}_{n,p} \rightarrow \SL_{n+1} (\mathbb{F}_p [t])$ maps the every subgroup $\widetilde{K}_i $ bijectively to $K_i$ (where $K_i$ is as in \Cref{The Kaufman-Oppenheim coset complexes subsec}).  Thus, by abuse of notation,  below we will denote $K_i$ in lieu of $\widetilde{K}_i$ (treating $K_i$ as both a subgroup of $ \SL_{n+1} (\mathbb{F}_p [t])$ and of $\widetilde{\Gamma}_{n,p}$).

Let $\widetilde{X} = \mathcal{CC} (\widetilde{\Gamma}_{n,p},  \lbrace K_i \rbrace_{0 \leq i \leq n})$.  We note that all the relations in the presentation of $\widetilde{\Gamma}_{n,p}$ are relations that occur in the subgroups $K_i$.  Thus,  by \cite[Theorem 2.4]{AbelsH},  the complex $\widetilde{X}$ is connected and simply connected and in particular,  for every group $\Lambda$ it holds that $H^1 (\widetilde{X}, \Lambda) =0$.  We note that $\widetilde{X}$ is actually the universal cover of $X$, but we will make no use of this fact.

\begin{theorem}
\label{vanishing of coho for X_n,p thm}
Let $n \geq 3$ and $p$ an odd prime.  Denote
$$X = X_{n,p} = \mathcal{CC} (\SL_{n+1} (\mathbb{F}_p [t] ), \lbrace K_i \rbrace_{0 \leq i \leq n}), $$
where $K_i$ are as in \Cref{The Kaufman-Oppenheim coset complexes subsec}.  Then for every group $\Lambda$ that does not have non-trivial elements of order $p$,  it holds that $H^1 (X, \Lambda) =0$.
\end{theorem}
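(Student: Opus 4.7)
The plan is to realize $X$ as an iterated quotient of $\widetilde{X}$, starting from the fact $H^1(\widetilde{X},\Lambda)=0$ (simple connectivity of $\widetilde{X}$) and applying Theorem~\ref{making pre chamber into chamber and vanishing of H1 thm} repeatedly, using the combinatorial propagation recipe of Theorem~\ref{propagation thm} as a guide for the order in which relations are added. Concretely, I will build a chain $\widetilde{\Gamma}_{n,p}=\Gamma^{(0)}\twoheadrightarrow\Gamma^{(1)}\twoheadrightarrow\Gamma^{(2)}$ with associated complexes $X^{(k)}=\mathcal{CC}(\Gamma^{(k)},\{K_i\}_{0\le i\le n})$, maintaining at stage $k$ the following invariant: (i) in $\Gamma^{(k)}$ all $\{(i,j),(i',j')\}$-relations of Definition~\ref{relations indexed by roots def} hold whenever $\{(i,j),(i',j')\}<\mathcal{C}_k$, (ii) each $\widetilde{K}_i$ maps injectively into $\Gamma^{(k)}$ so $K_i$ is well defined as a subgroup, and (iii) $H^1(X^{(k)},\Lambda)=0$. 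The base case $k=0$ is built into the definition of $\widetilde{\Gamma}_{n,p}$ and $\widetilde{X}$.

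For the inductive step $k-1\to k$, I process the finitely many chambers $C_\gamma\in\mathcal{C}_k$ one at a time. For each such chamber let
$$G_\gamma \;=\; \langle\,x_{i,j}(r):(i,j)\in C_\gamma,\ r\in\mathbb{F}_p[t],\ \deg(r)\le 3\,\rangle\;<\;\Gamma^{(k-1)}.$$
The two defining conditions of $\mathcal{C}_k$ are precisely tailored so that $G_\gamma$ is a pre-$(n,p)$-chamber group, up to relabeling of $\{1,\dots,n+1\}$ by $\gamma$: condition~1 supplies the boundary relations along $\partial C_\gamma$, and condition~2 supplies the shared-index relations, both following from the inductive hypothesis since the relevant pairs of roots lie in some chamber of $\mathcal{C}_{k-1}$. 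The surjection onto the chamber group is automatic from the generators. Theorem~\ref{making pre chamber into chamber and vanishing of H1 thm} then provides a further quotient which upgrades all $\{(i,j),(i',j')\}$-relations for pairs in $C_\gamma$ while preserving $H^1=0$. After cycling through every $C_\gamma\in\mathcal{C}_k$ we obtain $\Gamma^{(k)}$ with the required invariants; injectivity on each $\widetilde{K}_i$ persists for free because the total composition $\widetilde{\Gamma}_{n,p}\to\SL_{n+1}(\mathbb{F}_p[t])$ is already injective on each $\widetilde{K}_i$, so every intermediate quotient must be as well.

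After the step $k=2$, Theorem~\ref{propagation thm} guarantees that $\{(i,j),(i',j')\}<\mathcal{C}_2$ for every pair of non-opposite roots, so $\Gamma^{(2)}$ satisfies the full presentation of Corollary~\ref{pres. of SLn roots coro}; hence $\Gamma^{(2)}\cong\SL_{n+1}(\mathbb{F}_p[t])$, $X^{(2)}=X$, and we conclude $H^1(X,\Lambda)=0$. The main obstacle is the bookkeeping at each inductive stage: one must verify that $G_\gamma$ is actually a pre-chamber group inside $\Gamma^{(k-1)}$ in a way compatible with Theorem~\ref{making pre chamber into chamber and vanishing of H1 thm}, which is phrased only for the standard positive chamber $C_0$. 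The resolution is an equivariance argument: the Steinberg-type presentation of $\SL_{n+1}(\mathbb{F}_p[t])$ and the notion of pre-chamber group are invariant under the natural $\Sym\{1,\dots,n+1\}$-action that permutes indices, so pulling back along $\gamma$ reduces the general case to the case of $C_0$. Once this symmetry check is spelled out, everything else follows mechanically from combining Theorems~\ref{making pre chamber into chamber and vanishing of H1 thm} and~\ref{propagation thm}, Corollary~\ref{p coset complex H^1 coro}, and the presentation of $\SL_{n+1}(\mathbb{F}_p[t])$.
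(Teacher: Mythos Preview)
Your proposal is correct and follows essentially the same route as the paper: start from the simply connected $\widetilde{X}$, use the combinatorics of Theorem~\ref{propagation thm} to identify, for each chamber $C\in\mathcal{C}_k$, a pre-$(n,p)$-chamber subgroup, and then apply Theorem~\ref{making pre chamber into chamber and vanishing of H1 thm} chamber by chamber to pass from $\mathcal{C}_0$ to $\mathcal{C}_1$ to $\mathcal{C}_2$, finally invoking Corollary~\ref{pres. of SLn roots coro} to identify the resulting quotient with $\SL_{n+1}(\mathbb{F}_p[t])$. Your explicit remark that the pre-chamber definition is stated only for $C_0$ and must be transported by the $\Sym\{1,\dots,n+1\}$-action is a point the paper leaves implicit; otherwise the arguments match.
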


\begin{proof}
Using the notation of Theorem \ref{propagation thm},  for every pair of roots $\lbrace (i,j), (i',j') \rbrace < \mathcal{C}_0$,  the $\lbrace (i,j), (i',j') \rbrace$-relations holds in $\widetilde{\Gamma}_{n,p}$.  By the definition of $\mathcal{C}_1$ and of pre-$(n,p)$-chamber groups,  it follows that for every $C \in \mathcal{C}_1$,  the subgroup
$$K_C = \langle x_{i,j} (r) : (i,j) \in C,   r \in \mathbb{F}_p [t], \deg (r) \leq 3 \rangle$$
is a pre-$(n,p)$-chamber group.

Thus, by applying Theorem \ref{making pre chamber into chamber and vanishing of H1 thm} for every such group,  we can pass to a quotient $\Gamma '$ in which every $K_C$ subgroup with $C \in \mathcal{C}_1$ is a $(n,p)$-chamber group and for $X' = \mathcal{CC} (\Gamma ' ,   \lbrace K_i \rbrace_{0 \leq i \leq n\rbrace})$,  $H^1 (X' , \Lambda) = 0$.

We repeat the same argument for $C \in \mathcal{C}_2$ and pass to a quotient group $\Gamma ''$ in which every $K_C$ subgroup with $C \in \mathcal{C}_2$ is a $(n,p)$-chamber group and for $X'' = \mathcal{CC} (\Gamma '' ,   \lbrace K_i \rbrace_{0 \leq i \leq n\rbrace})$,  $H^1 (X'' , \Lambda) = 0$.

We note that by Theorem \ref{propagation thm},  for every non-opposite pair of roots $(i,j), (i',j')$,  it holds that $\lbrace (i,j), (i',j') \rbrace < \mathcal{C}_2$ and thus in $\Gamma ''$ for every such pair the $\lbrace (i,j), (i',j') \rbrace$-relations hold.  It follows from Corollary \ref{pres. of SLn roots coro},  that $\Gamma '' =
\SL_{n+1} (\mathbb{F}_p [t])$ and that $X =X''$.  Thus,  $H^1 (X, \Lambda) = 0$ as needed.
\end{proof}

\begin{theorem}
\label{vanishing of coho for X_n,p^s thm}
Let $n \geq 3$,  $s >3n$ and $p$ an odd prime.  Denote
$$X_{n,p}^{(s)} = \mathcal{CC} (\SL_{n+1} (\mathbb{F}_p [t] / \langle t^s \rangle ), \lbrace K_i \rbrace_{0 \leq i \leq n}), $$
where $K_i$ are as in \Cref{The Kaufman-Oppenheim coset complexes subsec}.  Then for every group $\Lambda$ that does not have non-trivial elements of order $p$,  it holds that $H^1 (X_{n,p}^{(s)} , \Lambda) =0$.
\end{theorem}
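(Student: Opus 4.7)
The plan is to realize $X_{n,p}^{(s)}$ as a quotient of $X_{n,p}$ by an appropriate normal subgroup of $\SL_{n+1}(\mathbb{F}_p[t])$ and then invoke Corollary \ref{p coset complex H^1 coro} together with Theorem \ref{vanishing of coho for X_n,p thm}. Define the congruence kernel
$$\Gamma_{n,p}^s := \Ker\bigl(\SL_{n+1}(\mathbb{F}_p[t]) \to \SL_{n+1}(\mathbb{F}_p[t]/\langle t^s \rangle)\bigr),$$
which is a normal subgroup of $\SL_{n+1}(\mathbb{F}_p[t])$. Because every entry of a matrix in $K_i$ is a polynomial of degree at most $3n$ while every off-diagonal entry of a matrix in $\Gamma_{n,p}^s$ is divisible by $t^s$, the hypothesis $s > 3n$ forces $\Gamma_{n,p}^s \cap K_i = \{e\}$ for each $i$. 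Hence the projection $\SL_{n+1}(\mathbb{F}_p[t]) \to \SL_{n+1}(\mathbb{F}_p[t]/\langle t^s \rangle)$ is injective on each $K_i$, and Proposition \ref{N backs X is a coset complex prop} (with the convention of the remark following it) identifies $\Gamma_{n,p}^s \backslash X_{n,p}$ with $X_{n,p}^{(s)}$.

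Next I would verify that $\Gamma_{n,p}^s$ is generated by elements of order $p$. Using the isomorphism $\SL_{n+1}(\mathbb{F}_p[t]) \cong \St_{n+1}(\mathbb{F}_p[t])$ from Theorem \ref{Stn = SLn in Fp t case thm}, Observation \ref{kernel of St (R/I) obs} says that $\Gamma_{n,p}^s$ is the subgroup normally generated by the elements $x_{i,j}(r)$ with $r \in \langle t^s \rangle$. Each such generator satisfies
$$x_{i,j}(r)^p = x_{i,j}(pr) = x_{i,j}(0) = e$$
because the base ring has characteristic $p$, so every normal generator has order $p$. Since conjugation preserves order, the full normal closure is generated by elements of order $p$.

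Finally, Theorem \ref{vanishing of coho for X_n,p thm} (applicable because $\Lambda$ has no non-trivial elements of order $p$) gives $H^1(X_{n,p}, \Lambda) = 0$, and Corollary \ref{p coset complex H^1 coro} applied with $\Gamma = \SL_{n+1}(\mathbb{F}_p[t])$, subgroups $\{K_i\}$, and normal subgroup $N = \Gamma_{n,p}^s$ yields
$$H^1\bigl(\mathcal{CC}(\Gamma_{n,p}^s \backslash \SL_{n+1}(\mathbb{F}_p[t]),\{K_i\}),\Lambda\bigr) = H^1(X_{n,p}^{(s)},\Lambda) = 0,$$
as desired. There is no real obstacle here: the deep content of the paper is packed into Theorem \ref{vanishing of coho for X_n,p thm} and Corollary \ref{p coset complex H^1 coro}, and this final statement is a short deduction once the congruence kernel is identified and its generation by order-$p$ elementary matrices is recorded.
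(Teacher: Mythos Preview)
Your proof is correct and follows essentially the same route as the paper: identify $X_{n,p}^{(s)}$ as the quotient $\Gamma_{n,p}^s \backslash X_{n,p}$, use Theorem~\ref{Stn = SLn in Fp t case thm} and Observation~\ref{kernel of St (R/I) obs} to see that the congruence kernel is normally generated by order-$p$ elementary matrices, and then apply Theorem~\ref{vanishing of coho for X_n,p thm} together with Corollary~\ref{p coset complex H^1 coro}. If anything, your version is a bit more careful than the paper's in explicitly checking $\Gamma_{n,p}^s \cap K_i = \{e\}$ from the degree bound $s>3n$ and in spelling out why $x_{i,j}(r)^p=e$.
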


\begin{proof}
Denote
$$X = X_{n,p} = \mathcal{CC} (\SL_{n+1} (\mathbb{F}_p [t] ), \lbrace K_i \rbrace_{0 \leq i \leq n}).$$
 By Theorem \ref{vanishing of coho for X_n,p thm},  $H^1 (X, \Lambda) =0$.

Denote $\Gamma_{n,p}^s = \Ker (\SL_{n +1} (\mathbb{F}_p [t]) \rightarrow \SL_{n +1} (\mathbb{F}_p [t] / \langle t^s \rangle))$ and note that these are normal subgroups such that for $X_{n,p} = \mathcal{CC} (\SL_{n +1} (\mathbb{F}_p [t]),  \lbrace K_i \rbrace_{0 \leq i \leq n})$ it holds that $X_{n,p}^{(s)} = \Gamma_{n,p}^s \backslash X_{n,p}$.

By Theorem \ref{Stn = SLn in Fp t case thm},  $\SL_{n+1} (\mathbb{F}_p [t]) = \St_{n+1} (\mathbb{F}_p [t])$ and thus,  by Observation \ref{kernel of St (R/I) obs},  $\Gamma_{n,p}^s$ is normally generated by elements of the form $\lbrace e_{i,j} (t^{k}) : 1 \leq i, j \leq n+1,  i\ neq j,  k > s \rbrace$ and every such element is of order $p$.

It follows by Corollary \ref{p coset complex H^1 coro} that $H^1 (X_{n,p}^{(s)} , \Lambda) =0$ as needed.
\end{proof}

%Combining this Theorem with Theorem \ref{cosys exp of KO complexes thm} yields the following:
\begin{theorem}
Let $n \in \mathbb{N},  n \geq 3$ and $\Lambda$ a finite group.  There is a constant $p_1$ such that for every $p \geq p_1$ prime,  the family $\lbrace X_{n,p}^{(s)} : s >3n \rbrace$ is a family of uniformly bounded degree that has $1$-coboundary expansion over $\Lambda$.
\end{theorem}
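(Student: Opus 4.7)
The plan is to assemble three ingredients that are already established in the paper. First, Theorem \ref{cosys exp of KO complexes thm} provides a prime threshold $p_0 = p_0(n)$ and a constant $\beta > 0$ such that for every prime $p \geq p_0$, every group $\Lambda$, and every $s > 3n$, the complex $X_{n,p}^{(s)}$ is a $(\Lambda, \beta)$ $1$-cosystolic expander, and the family has uniformly bounded degree. Second, Theorem \ref{vanishing of coho for X_n,p^s thm} gives $H^1(X_{n,p}^{(s)}, \Lambda) = 0$ whenever $\Lambda$ has no non-trivial elements of order $p$. Third, the background section records that $h^k_{\cobound}(X, \Lambda) = h^k_{\cosys}(X, \Lambda)$ whenever $H^k(X, \Lambda)$ is trivial.

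To combine these, I would set $p_1 := \max(p_0, |\Lambda| + 1)$. For any prime $p \geq p_1$, Lagrange's theorem rules out elements of order $p$ in the finite group $\Lambda$, so Theorem \ref{vanishing of coho for X_n,p^s thm} applies and gives $H^1(X_{n,p}^{(s)}, \Lambda) = 0$ for every $s > 3n$. In addition, since $X_{n,p}^{(s)}$ has a connected $1$-skeleton (it is a local spectral expander), $Z^0(X_{n,p}^{(s)}, \Lambda)$ consists of constant functions and the $C^{-1}$-action collapses these into a single orbit containing the trivial cocycle, so $H^0(X_{n,p}^{(s)}, \Lambda) = 0$ as well.

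Invoking the identification $h^k_{\cobound} = h^k_{\cosys}$ for $k = 0, 1$ and substituting the cosystolic bound from Theorem \ref{cosys exp of KO complexes thm}, I obtain $h^0_{\cobound}(X_{n,p}^{(s)}, \Lambda) \geq \beta$ and $h^1_{\cobound}(X_{n,p}^{(s)}, \Lambda) \geq \beta$ uniformly in $s$, which is exactly the statement that $X_{n,p}^{(s)}$ is a $(\Lambda, \beta)$ $1$-coboundary expander. Uniform boundedness of the degree is inherited directly from Theorem \ref{cosys exp of KO complexes thm}. There is no genuine obstacle at this final assembly step: all the mathematical content has already been absorbed into the cosystolic expansion theorem and, more importantly, into the cohomological vanishing result of Theorem \ref{vanishing of coho for X_n,p^s thm}, which is the heart of the paper.
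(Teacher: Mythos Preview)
Your proposal is correct and follows essentially the same route as the paper: set $p_1 = \max\{p_0, |\Lambda|+1\}$, use Theorem \ref{cosys exp of KO complexes thm} for uniform cosystolic expansion and bounded degree, use Theorem \ref{vanishing of coho for X_n,p^s thm} (together with Lagrange's theorem) for $H^1 = 0$, and conclude coboundary expansion via $h^k_{\cobound}=h^k_{\cosys}$. Your explicit treatment of $H^0$ via connectedness is a small addition to the paper's more terse argument, but the overall approach is identical.
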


\begin{proof}
Fix $n,  \Lambda$ as above.  Let $p_0$ be the constant of Theorem \ref{cosys exp of KO complexes thm} and let $p_1 = \max \lbrace p_0,  \vert \Lambda \vert +1\rbrace$.  For every prime $p \geq p_1$ the following holds: first,  by Theorem \ref{cosys exp of KO complexes thm},  the family $\lbrace X_{n,p}^{(s)} : s >3n \rbrace$ is a family of uniformly bounded degree $\Lambda$ 1-cosystolic expanders.  Second,  since $p > \vert \Lambda \vert$,  the group $\Lambda$ does not contain a non-trivial element of order $p$.  Thus,  by Theorem \ref{vanishing of coho for X_n,p^s thm},  for every $X_{n,p}^{(s)}$,  $H^1 (X_{n,p}^{(s)},   \Lambda) =0$.  It follows that  the family $\lbrace X_{n,p}^{(s)} : s >3n \rbrace$ is a family of uniformly bounded degree $\Lambda$ 1-coboundary expanders.
\end{proof}

\bibliographystyle{alpha}
\bibliography{bibl}

\newcommand{\etalchar}[1]{$^{#1}$}
\begin{thebibliography}{DEL{\etalchar{+}}22}

\bibitem[AH93]{AbelsH}
Herbert Abels and Stephan Holz.
\newblock Higher generation by subgroups.
\newblock {\em J. Algebra}, 160(2):310--341, 1993.

\bibitem[BD01]{BissD}
Daniel~K. Biss and Samit Dasgupta.
\newblock A presentation for the unipotent group over rings with identity.
\newblock {\em J. Algebra}, 237(2):691--707, 2001.

\bibitem[BLM24]{BLM}
Mitali Bafna, Noam Lifshitz, and Dor Minzer.
\newblock Constant degree direct product testers with small soundness.
\newblock \url{https://arxiv.org/abs/2402.00850}, 2024.

\bibitem[BM24]{BM}
Mitali Bafna and Dor Minzer.
\newblock Characterizing direct product testing via coboundary expansion.
\newblock In {\em S{TOC}'24---{P}roceedings of the 56th {A}nnual {ACM}
  {S}ymposium on {T}heory of {C}omputing}, pages 1978--1989. ACM, New York,
  [2024] \copyright 2024.

\bibitem[BMV24]{BMV}
Mitali Bafna, Dor Minzer, and Nikhil Vyas.
\newblock Quasi-linear size pcps with small soundness from hdx.
\newblock \url{https://arxiv.org/abs/2407.12762}, 2024.

\bibitem[CL24]{CL}
Michael Chapman and Alexander Lubotzky.
\newblock Stability of homomorphisms, coverings and cocycles {II}: Examples,
  applications and open problems.
\newblock \url{https://arxiv.org/abs/2311.06706}, 2024.

\bibitem[DD24a]{DD-cosys}
Yotam Dikstein and Irit Dinur.
\newblock {Coboundary and Cosystolic Expansion Without Dependence on Dimension
  or Degree}.
\newblock In Amit Kumar and Noga Ron-Zewi, editors, {\em Approximation,
  Randomization, and Combinatorial Optimization. Algorithms and Techniques
  (APPROX/RANDOM 2024)}, volume 317 of {\em Leibniz International Proceedings
  in Informatics (LIPIcs)}, pages 62:1--62:24, Dagstuhl, Germany, 2024. Schloss
  Dagstuhl -- Leibniz-Zentrum f{\"u}r Informatik.

\bibitem[DD24b]{DD-swap}
Yotam Dikstein and Irit Dinur.
\newblock Swap cosystolic expansion.
\newblock In {\em Proceedings of the 56th Annual ACM Symposium on Theory of
  Computing}, STOC 2024, pages 1956--1966, New York, NY, USA, 2024. Association
  for Computing Machinery.

\bibitem[DDL24]{DDL}
Yotam Dikstein, Irit Dinur, and Alexander Lubotzky.
\newblock Low acceptance agreement tests via bounded-degree symplectic hdxs.
\newblock \url{https://arxiv.org/abs/2402.01078}, 2024.

\bibitem[DEL{\etalchar{+}}22]{DELLM}
Irit Dinur, Shai Evra, Ron Livne, Alexander Lubotzky, and Shahar Mozes.
\newblock Locally testable codes with constant rate, distance, and locality.
\newblock In {\em S{TOC} '22---{P}roceedings of the 54th {A}nnual {ACM}
  {SIGACT} {S}ymposium on {T}heory of {C}omputing}, pages 357--374. ACM, New
  York, [2022] \copyright 2022.

\bibitem[DK17]{DK}
Irit Dinur and Tali Kaufman.
\newblock High dimensional expanders imply agreement expanders.
\newblock In {\em 58th {A}nnual {IEEE} {S}ymposium on {F}oundations of
  {C}omputer {S}cience---{FOCS} 2017}, pages 974--985. IEEE Computer Soc., Los
  Alamitos, CA, 2017.

\bibitem[DLYZ23]{NewCodes}
Irit Dinur, Siqi Liu, and Rachel Yun~Zhang.
\newblock New codes on high dimensional expanders.
\newblock \url{https://arxiv.org/abs/2308.15563}, 2023.

\bibitem[DM22]{DM}
Irit Dinur and Roy Meshulam.
\newblock Near coverings and cosystolic expansion.
\newblock {\em Arch. Math. (Basel)}, 118(5):549--561, 2022.

\bibitem[dPVB24]{KacM-complexes}
Laura~Grave de~Peralta and Inga Valentiner-Branth.
\newblock High-dimensional expanders from kac-moody-steinberg groups.
\newblock \url{https://arxiv.org/abs/2401.05197}, 2024.

\bibitem[EK24]{EK}
Shai Evra and Tali Kaufman.
\newblock Bounded degree cosystolic expanders of every dimension.
\newblock {\em J. Amer. Math. Soc.}, 37(1):39--68, 2024.

\bibitem[EKZ20]{EKZ}
Shai Evra, Tali Kaufman, and Gilles Z\'emor.
\newblock Decodable quantum {LDPC} codes beyond the square root distance
  barrier using high dimensional expanders.
\newblock In {\em 2020 {IEEE} 61st {A}nnual {S}ymposium on {F}oundations of
  {C}omputer {S}cience}, pages 218--227. IEEE Computer Soc., Los Alamitos, CA,
  [2020] \copyright 2020.

\bibitem[FK24]{FK}
Uriya~A. First and Tali Kaufman.
\newblock Cosystolic expansion of sheaves on posets with applications to good
  2-query locally testable codes and lifted codes.
\newblock In {\em S{TOC}'24---{P}roceedings of the 56th {A}nnual {ACM}
  {S}ymposium on {T}heory of {C}omputing}, pages 1446--1457. ACM, New York,
  [2024] \copyright 2024.

\bibitem[GK23]{GK}
Roy Gotlib and Tali Kaufman.
\newblock {List Agreement Expansion from Coboundary Expansion}.
\newblock In Yael Tauman~Kalai, editor, {\em 14th Innovations in Theoretical
  Computer Science Conference (ITCS 2023)}, volume 251 of {\em Leibniz
  International Proceedings in Informatics (LIPIcs)}, pages 61:1--61:23,
  Dagstuhl, Germany, 2023. Schloss Dagstuhl -- Leibniz-Zentrum f{\"u}r
  Informatik.

\bibitem[Gro10]{Grom}
Mikhail Gromov.
\newblock Singularities, expanders and topology of maps. {P}art 2: {F}rom
  combinatorics to topology via algebraic isoperimetry.
\newblock {\em Geom. Funct. Anal.}, 20(2):416--526, 2010.

\bibitem[KKL14]{KKL}
Tali Kaufman, David Kazhdan, and Alexander Lubotzky.
\newblock Ramanujan complexes and bounded degree topological expanders.
\newblock In {\em 55th {A}nnual {IEEE} {S}ymposium on {F}oundations of
  {C}omputer {S}cience---{FOCS} 2014}, pages 484--493. IEEE Computer Soc., Los
  Alamitos, CA, 2014.

\bibitem[KM21]{KM}
Tali Kaufman and David Mass.
\newblock {Unique-Neighbor-Like Expansion and Group-Independent Cosystolic
  Expansion}.
\newblock In Hee-Kap Ahn and Kunihiko Sadakane, editors, {\em 32nd
  International Symposium on Algorithms and Computation (ISAAC 2021)}, volume
  212 of {\em Leibniz International Proceedings in Informatics (LIPIcs)}, pages
  56:1--56:17, Dagstuhl, Germany, 2021. Schloss Dagstuhl -- Leibniz-Zentrum
  f{\"u}r Informatik.

\bibitem[KO18]{KO-construction}
Tali Kaufman and Izhar Oppenheim.
\newblock Construction of new local spectral high dimensional expanders.
\newblock In {\em S{TOC}'18---{P}roceedings of the 50th {A}nnual {ACM} {SIGACT}
  {S}ymposium on {T}heory of {C}omputing}, pages 773--786. ACM, New York, 2018.

\bibitem[KO21]{KO-cobound}
Tali Kaufman and Izhar Oppenheim.
\newblock Coboundary and cosystolic expansion from strong symmetry.
\newblock In {\em 48th {I}nternational {C}olloquium on {A}utomata, {L}anguages,
  and {P}rogramming}, volume 198 of {\em LIPIcs. Leibniz Int. Proc. Inform.},
  pages Art. No. 84, 16. Schloss Dagstuhl. Leibniz-Zent. Inform., Wadern, 2021.

\bibitem[KO23]{KO-construction2}
Tali Kaufman and Izhar Oppenheim.
\newblock High dimensional expanders and coset geometries.
\newblock {\em European J. Combin.}, 111:Paper No. 103696, 31, 2023.
\newblock With a preface by Alexander Lubotzky.

\bibitem[LM06]{LM}
Nathan Linial and Roy Meshulam.
\newblock Homological connectivity of random 2-complexes.
\newblock {\em Combinatorica}, 26(4):475--487, 2006.

\bibitem[LSV05a]{LSV2}
Alexander Lubotzky, Beth Samuels, and Uzi Vishne.
\newblock Explicit constructions of {R}amanujan complexes of type {$\tilde
  A_d$}.
\newblock {\em European J. Combin.}, 26(6):965--993, 2005.

\bibitem[LSV05b]{LSV1}
Alexander Lubotzky, Beth Samuels, and Uzi Vishne.
\newblock Ramanujan complexes of type {$\tilde A_d$}.
\newblock volume 149, pages 267--299. 2005.
\newblock Probability in mathematics.

\bibitem[OP22]{O'DP}
Ryan O'Donnell and Kevin Pratt.
\newblock High-dimensional expanders from {C}hevalley groups.
\newblock In {\em 37th {C}omputational {C}omplexity {C}onference}, volume 234
  of {\em LIPIcs. Leibniz Int. Proc. Inform.}, pages Art. No. 18, 26. Schloss
  Dagstuhl. Leibniz-Zent. Inform., Wadern, 2022.

\bibitem[Opp18]{OppLocI}
Izhar Oppenheim.
\newblock Local spectral expansion approach to high dimensional expanders
  {P}art {I}: {D}escent of spectral gaps.
\newblock {\em Discrete Comput. Geom.}, 59(2):293--330, 2018.

\bibitem[PK22]{PK}
Pavel Panteleev and Gleb Kalachev.
\newblock Asymptotically good quantum and locally testable classical {LDPC}
  codes.
\newblock In {\em S{TOC} '22---{P}roceedings of the 54th {A}nnual {ACM}
  {SIGACT} {S}ymposium on {T}heory of {C}omputing}, pages 375--388. ACM, New
  York, [2022] \copyright 2022.

\bibitem[Spl86]{Split}
Siegfried Splitthoff.
\newblock Finite presentability of {S}teinberg groups and related {C}hevalley
  groups.
\newblock In {\em Applications of algebraic {$K$}-theory to algebraic geometry
  and number theory, {P}art {I}, {II} ({B}oulder, {C}olo., 1983)}, volume~55 of
  {\em Contemp. Math.}, pages 635--687. Amer. Math. Soc., Providence, RI, 1986.

\end{thebibliography}
\end{document}